\providecommand{\algorithmname}{Algorithm}
\theoremstyle{plain}
\newtheorem{thm}{\protect\theoremname}
\theoremstyle{remark}
\newtheorem{rem}[thm]{\protect\remarkname}
\theoremstyle{plain}
\newtheorem{cor}[thm]{\protect\corollaryname}
\theoremstyle{plain}
\newtheorem{prop}[thm]{\protect\propositionname}
\theoremstyle{plain}
\newtheorem{lem}[thm]{\protect\lemmaname}
\theoremstyle{definition}
\newtheorem{defi}{\protect\definitionname}
\DeclareMathOperator{\AND}{AND}
\global\long\def\s[#1]{\textnormal{\scriptsize #1}}
\global\long\def\st[#1]{\textnormal{\tiny #1}}
\global\long\def\P{\mathbb{P}}
\global\long\def\v[#1]{\mathbf{#1}} 
\global\long\def\m[#1]{\boldsymbol{#1}} 
\global\long\def\r[#1]{#1}
\global\long\def\dfn{:=}
\global\long\def\trre[#1,#2]{\overset{{\scriptstyle (#2)}}{#1}} 
\providecommand{\corollaryname}{Corollary}
\providecommand{\lemmaname}{Lemma}
\providecommand{\propositionname}{Proposition}
\providecommand{\remarkname}{Remark}
\providecommand{\theoremname}{Theorem}
\providecommand{\definitionname}{Definition}
\renewcommand\[{\begin{equation}}
\renewcommand\]{\end{equation}}
\begin{document}

\title{Adaptive Mean Estimation in the Hidden Markov sub-Gaussian Mixture Model}
\runtitle{Estimation in the Hidden Markov sub-Gaussian Mixture Model}
\begin{aug}
\author[A]{\fnms{Vahe}~\snm{Karagulyan}\ead[label=e1]{karagulyan@essec.edu}}
\and
\author[A]{\fnms{Mohamed}~\snm{Ndaoud}\ead[label=e2]{ndaoud@essec.edu}}
\address[A]{Department of Decisions Sciences, ESSEC Business School \printead[presep={,\ }]{e1,e2}}

\end{aug}
\begin{abstract}
We investigate the problem of center estimation in the high dimensional binary sub-Gaussian Mixture Model with Hidden Markov structure on the labels. We first study the limitations of existing results in the high dimensional setting and then propose a minimax optimal procedure for the problem of center estimation. Among other findings, we show that our procedure reaches the optimal rate that is of order $\sqrt{\delta d/n} + d/n$ instead of $\sqrt{d/n} + d/n$ where $\delta \in(0,1)$ is a dependence parameter between labels.
Along the way, we also develop an adaptive variant of our procedure that is globally minimax optimal. In order to do so, we rely on a more refined and localized analysis of the estimation risk. Overall, leveraging the hidden Markovian dependence between the labels, we show that it is possible to get a strict improvement of the rates adaptively at almost no cost.   
\end{abstract}
\maketitle

\textbf{Keywords:\/ sub-Gaussian mixtures, Hidden Markov Models, high dimensional mean estimation, adaptation.}

\pagenumbering{arabic}

\section{Introduction}
In the realm of statistical modeling and machine learning, Gaussian Mixture Models (GMMs) have emerged as a versatile and powerful tool. These models find their applications across a wide spectrum of fields, from data analysis to artificial intelligence. GMMs represent a class of probabilistic models that combine multiple Gaussian distributions, each characterized by its own set of parameters. A simple example of a GMM is the symmetric two-component Gaussian mixture model in $\mathbf{R}^d$ with centers $\pm \theta$:
$$\mathbf{P}_{\theta}=\frac{1}{2}\mathcal{N}(-\theta,\mathbf{I}_d)+\frac{1}{2}\mathcal{N}(\theta,\mathbf{I}_d),$$
where $\theta\in\mathbf{R}^d$ is the unknown center of the mixture.
Given a sample of $n$ identically distributed observations $(X_1,X_2,...,X_n) \sim \mathbf{P}_{\theta}$, the mixture can be represented in the following form. For all $i=1,2,...,n$, we have
$$X_i = \eta_i\theta + \xi_i,$$
where the labels $\eta_i\in \{-1;+1\}$, and $\xi_1,\xi_2,...,\xi_n$ are standard independent Gaussian random vectors in $\mathbf{R}^d$.
When dealing with data of Gaussian Mixture Models, two primary inference problems emerge: clustering and parameter estimation. On the one hand, the goal is to recover to which cluster each data point $X_i$ belongs, which can be seen as the problem of label  recovery up to a sign flip. There has been an extensive line of research on this problem, among them some works that studied the performance of Lloyd's algorithm \citep{Lu, ndaoud2022sharp}, that of spectral clustering \citep{abbe2022,loffler2021optimality} and that of SDP relaxation methods \citep{giraud2019partial, chen2021cutoff}, to name a few.
On the other hand, parameter estimation focuses primarily on estimating the center $\theta$. There are various works dealing with parameter estimation, for instance \citep{balak,raz1,raz2,raz3,raz4} and references therein. Most of the existing approaches evolve around variants of MLE such that the Expectation-Maximization (EM) algorithm (see also \citep{EM-klus} or \citep{EM-wu}). The latter proposes an algorithm which estimates $\theta$ under the quadratic loss (up to a sign flip) achieving the minimax error $\sqrt{\frac{d}{n}}\vee\frac{d}{n}$ up to a logarithmic factor, after $\sqrt{n}$ steps of expectation maximizations.

While most of the prior work focused on the case of independent observations i.e. independent labels. In this paper, we will be focusing on parameter estimation of a more sophisticated Gaussian Mixture Model with a Hidden Markovian structure, where observations have memory. Here, the label of each new observation depends on the previous one. More precisely $(\eta_i)_i$ is a Markov chain where every new label gets assigned the opposite sign of its preceding point with fixed probability $\delta \in \left(0,1/2\right)$. As $\delta$ varies between $0$ to $1/2$ the model interpolates between two better studied models, namely the Gaussian Location Model (when $\delta=0$) and the Gaussian Mixture Model (when $\delta=1/2$), which gives heuristics about the minimax rate of our model.  A formal description of this  model is given in Section \ref{sec: model}.

\subsection{Notation}
Throughout the paper we use the following notations. 
For given quantities $a_{n}$ and $b_n$, we write $a_{n}  \lesssim b_{n}$ ($a_{n} \gtrsim b_n$) when $a_{n} \leq c b_{n}$ ($a_{n} \geq c b_{n}$) for some absolute constant $c>0$. We write $a_{n} \sim b_{n} $ if $a_{n} \lesssim b_{n}$  and $a_{n} \gtrsim b_n$. For any $a,b \in \mathbf{R}$, we denote by $a\vee b$ ($a \wedge b$) the maximum (the minimum) of $a$ and $b$. The operator $\|.\|$ is the $\ell_{2}$-norm in $\mathbf{R}^d$, and $\|M\|_{op}$ is the operator norm of any matrix $M \in \mathbf{R}^{a\times b}$, with respect to the $\ell_{2}$-norm. $M^{\top}$ is used as the transpose of matrix $M$, and $\mathbf{I}_{d}$ the identity matrix of dimension $d$. For any symmetric matrix $A$, $\lambda_{max}(A)$ will denote the top eigenvalue of $A$  and $v_{max}(A)$ the corresponding unit eigenvector.  Finally $c_0$, $c_1$, $c$ are used for positive constants whose values may vary from theorem to theorem.

\subsection{Related work and our contributions}
Most of the relevant research on clustering mixture with a hidden markov chain structure has been done in the nonparametric setting as in \citep{gassiat2023model,ab_2023,leh2021,alex2015,decastro} and references therein. Our work is specific to the parametric Gaussian mixture model and our results are strongly related to those presented by \citep{yihan} who were, to the best of our knowledge, the first to provide statistical results for center estimation of the HMM (Hidden Markov Model) Gaussian Mixture Model, assuming the knowledge of flip probability $\delta$. More specifically they establish a minimax lower bound (Theorem 2) for the $\ell_2$ risk of center estimation. Moreover, they propose a procedure that is minimax optimal up to a logarithmic factor (Theorem 1). A brief summary of their algorithm is based on a divide-and-conquer routine. It starts by dividing the sample into $\ell$ blocks of length $k$ (depending on the mixing time of the Markov chain),  calculating the sample mean of each block separately, then applying a well chosen spectral algorithm on the corresponding means. The main focus here will be in the high dimensional regime where $d \lesssim n$. Since the case $d \gtrsim n$ is trivial as the Markovian structure does not seem to help and the rates do not depend on $\delta$ in this case. In what follows we always assume that $d \leq n$.

The lower bound provided in Theorem 1 \cite{yihan} can be summarized as follows
\begin{equation}\label{eq:lowr_bound}\underset{\tilde{\theta}}{\text{inf}} \;\underset{\|\theta\|^2 = M }{\text{sup}} \;\mathbf{E}_\theta(\ell^2(\tilde{\theta},\theta))\gtrsim
\begin{cases}
\sqrt{\frac{\delta d}{n}} + \frac{d}{n}, & M \leq \sqrt{\frac{\delta d}{n}} \vee \frac{d}{n} ,\\
\frac{\frac{\delta d}{n}}{M}, & \sqrt{\frac{\delta d}{n}}\vee \frac{d}{n} \leq M \leq \delta\vee \frac{d}{n},\\
\frac{d}{n}, & \delta\vee \frac{d}{n} \leq M, \\
\end{cases}\end{equation}
up to a logarithmic factor, where the loss function $\ell(\cdot,\cdot)$ is defined as
$$
\ell(\tilde{\theta},\theta)\dfn\min\{\|\tilde{\theta}-\theta\|,\|\tilde{\theta}+\theta\|\}.$$ 
Based on \eqref{eq:lowr_bound} it is straightforward that we also have 
$$
\underset{\tilde{\theta}}{\text{inf}} \;\underset{ \theta }{\text{sup}} \;\mathbf{E}_\theta(\ell^2(\tilde{\theta},\theta))\gtrsim \sqrt{\frac{\delta d}{n}} + \frac{d}{n}. 
$$

In order the study the performance of given estimators we define two notions of optimality as follows. Let us first define the rate function $\phi(\cdot)$ such that
$$
\forall \theta \in \mathbf{R}^d, \quad \phi(\theta):= \begin{cases}
\sqrt{\frac{\delta d}{n}} + \frac{d}{n}, & \|\theta\|^2  \leq \sqrt{\frac{\delta d}{n}} \vee \frac{d}{n} ,\\
\frac{\frac{\delta d}{n}}{\|\theta\|^2} , & \sqrt{\frac{\delta d}{n}}\vee \frac{d}{n} \leq \|\theta\|^2 \leq \delta\vee \frac{d}{n},\\
\frac{d}{n}, & \delta\vee \frac{d}{n} \leq \|\theta\|^2 . \\
\end{cases}
$$
\begin{defi}\label{definition:global}
Let $\hat{\theta}$ be a measurable estimator of $\theta$. We say that $\hat{\theta}$ is locally minimax optimal, if 
$$
\underset{\theta }{\text{sup}}\;\mathbf{P}\left(\ell^2(\hat{\theta},\theta)\gtrsim \phi(\theta)\right) \leq c_1\cdot e^{-  c_0 d}.
$$
Moreover, we say that $\hat\theta$ is globally minimax optimal, if 
$$
\underset{\theta }{\text{sup}}\;\mathbf{P}\left(\ell^2(\hat{\theta},\theta)\gtrsim \sqrt{\frac{\delta d}{n}} + \frac{d}{n}\right) \leq c_1\cdot e^{-  c_0 d}.
$$
\end{defi}
The choice of the deviation bound $c_1\cdot e^{-  c_0 d}$, in Definition \ref{definition:global}, is inherited from the simple case of the Gaussian location model $(\delta = 0)$, where the rate of estimation is the parametric rate $d/n$ and can be achieved with probability greater than $1 - c_1\cdot e^{-  c_0 d}$.

One may observe that local minimax optimality leads naturally to global minimax optimality and hence is a stronger notion of optimality. The lower bound \eqref{eq:lowr_bound} is reminiscent, although more general, of the one in \cite{EM-wu} who provide a similar lower bound in the independent case ($\delta=1/2$). The lower bound \eqref{eq:lowr_bound} is less pessimistic than the global minimax lower bound given by $\sqrt{ \delta d/n} + d/n$. Indeed, it shows that as $\|\theta\|$ grows the lower bound gets better and eventually we recover the parametric rate $d/n$ given enough separation between the clusters. The notion of local minimax optimality or scaled minimax optimality has also been studied by the second author in the context of linear regression \cite{ndaoud2019interplay, ndaoud2020scaled}.
We recall the reader here that when $d \gtrsim n$, the lower bound \eqref{eq:lowr_bound} reads as $d/n$ independently on $\delta$.

In terms of the upper bound, \cite{yihan} propose a procedure requiring only knowledge of $\delta$ and claim that it is locally minimax optimal in the sense that its risk matches the lower bound \eqref{eq:lowr_bound}. 
After investigating the proof of Theorem $1$, we found out what seems to be an issue in their argument. 
Alongside with this issue, their results sparked our curiosity about the possibility of constructing procedures achieving the minimax optimal rate, without additional logarithmic factors and with tighter probability bounds.

Our contributions can be summarized as follows:
\begin{itemize}
    \item Focusing on the case when flip probability $\delta$ of the model is known, we've improved the results of \cite{yihan} with a more sophisticated choice of the estimator $\hat{\theta}(\ell)$ where the number of buckets $\ell$ depends on both $\delta$ and $\|\theta\|$. As opposed to the procedure given by \cite{yihan}, we claim that the optimal procedure has to depend on $\|\theta\|$ as well.  We show that the risk of our procedure matches \eqref{eq:lowr_bound} without the logarithmic factor. Our results hold for sub-Gaussian noise, moreover the deviations of our bounds are exponential.
Formally for $\ell^*=d \vee \lceil n (\|\theta\|^2 \vee \delta )\rceil \wedge n$, we show that $\hat{\theta}(\ell^*)$ is locally minimax optimal and that
$$\ell^2(\hat{\theta}(\ell^*),\theta)\lesssim
\begin{cases}
\sqrt{\frac{\delta d}{n}} + \frac{d}{n}, & \|\theta\|^2 \leq \sqrt{\frac{\delta d}{n}}\vee \frac{d}{n}\\
\frac{\frac{\delta d}{n}}{\|\theta\|^2} , & \sqrt{\frac{\delta d}{n}}\vee \frac{d}{n} \leq \|\theta\|^2 \leq \delta\vee \frac{d}{n}\\
\frac{d}{n}, & \delta\vee \frac{d}{n} \leq \|\theta\|^2, \\
\end{cases}$$
with probability $1-c_1 e^{-c_0 d}$.

\item 
Furthermore, in Section \ref{sec: adaptive}, we propose an adaptive estimator $\hat{\theta}$ that only depends on $\delta$  which is also minimax optimal. Along the way, we propose heuristics that estimation of $\delta$ might be more complicated that estimation of $\theta$ in the high dimensional setting which suggests that fully adaptive procedures should not rely on estimating $\delta$ first.  

Our $\|\theta\|$-adaptive procedure is a two-stage algorithm. We start by constructing an estimator using the sub-optimal number of buckets $\ell_1=d \vee \lceil \delta n \rceil $ which leads to an estimator $\hat{s}=\|\hat{\theta}(\ell_1)\|$ of the norm $\|\theta\|$. We then update the number of buckets such that $\ell_2=d \vee \lceil n (\hat{s}^2  \vee \delta )\rceil\wedge n$ before returning the new estimator $\hat{\theta}(\ell_2 )$. Hence we show that adaptation to $\|\theta\|$ comes at no cost in this case and that local minimax optimality is possible as long as $\delta$ is known.

\item Finally, we focus on the full adaptive case. In order to do so, we propose another method that depends weakly on $\delta$ in Section \ref{sec:fully adaptive}. This procedure is not locally minimax optimal but achieves the global minimax rate $\sqrt{\delta d/n} + d/n$. Because of its week dependence on $\delta$, full adaptation is possible in that case. We suggest a final procedure inspired by Lepski's adaptation scheme on the number of buckets $\ell$ that is globally minimax optimal (up to a logarithmic factor) but not locally minimax optimal. 

\item As a summary, our fully adaptive procedure reaches the rate $\sqrt{\delta d/n} +d/n$ in the worst case scenario which leads to a strict improvement over the vanilla spectral method that cannot get a rate better than $\sqrt{d/n} + d/n$. It remains an open question whether one can find fully adaptive procedures that are also locally minimax optimal.
\end{itemize}
All the proofs are deferred to the Appendix.

\subsection{Model}{\label{sec: model}}

We consider the high-dimensional Binary Markov sub-Gaussian Mixture Model. In this context we observe $n$ samples of a $d$-dimensional vector $\theta$ multiplied by a random sign $\eta_i \in \{-1,1\}, (i=1,...,n)$ corrupted with an additive sub-Gaussian noise. 
Formally the model takes the following form: 
\begin{equation}
Y_i = \eta_i\theta + \xi_i, \label{eqn: model}
\end{equation}
where $Y_i\in\mathbf{R}^d$ are the observations, $\theta \in \mathbf{R}^d , d\geq1$, $\eta_i$-s follow a homogeneous binary symmetric Markov chain with flip probability $\delta\in(0,1/2)$, namely $\eta_1$ is a Rademacher random variable and for all $i$ 
$$\eta_{i+1} = \begin{cases}
\text{ }\text{ }\eta_i & \text{with prob. } 1-\delta\\
-\eta_i & \text{with prob. } \delta,
\end{cases}
$$
and $\xi_i$ are i.i.d. isotropic $1$-(sub)Gaussian random vectors that are independent from $\eta$. We remind the reader that a random vector $\omega\in \mathbf{R}^d$ is said to be $1$-(sub)Gaussian if and only if for all $v\in \mathbf{R}^d$ we have $\mathbf{E}(\exp(\langle v,\omega\rangle)) \leq \exp(\|v\|^2/2)$.

For our further analysis we define $X_{i+1}:=\eta_{i+1}\eta_{i}$ or equivalently $\eta_{i+1}=X_{i+1}\eta_i$. Hence 
$$X_{i} = \begin{cases}
\text{ }\text{ }1 & \text{with prob. } 1-\delta\\
-1 & \text{with prob. } \delta.
\end{cases}
$$Notice that each $X_{i+1}$ is independent from $\eta_j$ for all $j \leq i$, since the sign change doesn't depend on the previous state. Moreover $X_i$-s are i.i.d. We remind the reader that our goal is to construct optimal procedures with respect to the risk defined by $\ell^2(\cdot,\cdot)$.

\section{Improved mean estimation for known $\delta$}

Let's assume that the flip probability $\delta$ is known and that $\delta\in(0,\frac{1}{2})$ without loss of generality, since otherwise we can multiply samples with even indices by $-1$ and replace the flip probability by $1-\delta$. We begin with a modified procedure initially inherited from \cite{yihan}, dividing the sample into $\ell$ blocks of length $k$ (we'll call them buckets). We will assume without loss of generality that $n=k\ell$. We denote by $I_i$ the set of indices of bucket $i$. For each bucket $i=1,2,...,\ell$ consider the sample mean of $k$ observations inside the bucket. Namely    

$$\tilde{Y_i}=\frac{1}{k}\sum_{j\in I_i} \eta_j\theta+\frac{1}{k}{\sum_{j\in I_i}\xi_i},$$
which can be rewritten neatly as

$$\tilde{Y_i}= \bar{\eta}_i\theta+\frac{w_i}{\sqrt{k}},$$
where $\bar{\eta}_i=\frac{1}{k}\sum_{j\in I_i} \eta_j$ and $w_i$ is still an isotropic $1$-(sub)Gaussian vector.
We combine the $\ell$ relations in an $\mathbf{R}^{d\times\ell}$ matrix form as follows
\begin{equation}
    \tilde{Y}=\theta\bar{\eta}^T+\frac{w}{\sqrt{k}}.
\end{equation}
Averaging reduces the variance of the noise while weakening the signal where $\eta$ is now replaced by $\bar{\eta}$. 
A key observation here is that $\bar{\eta}_i^2$-s are i.i.d., since each  $\bar{\eta}_i^2$ only depends on $X_j$-s of the $i$-th bucket. Let us define $g(\delta):= \mathbf{E}(\bar{\eta}_i^2)$. The following Lemma gives a hint on the mixing time of the Markov chain.
\begin{lem}\label{lem:expecation}
    Let $\delta,\delta' \in (0,1/2)$, then we have that 
    $$
    |g(\delta) - g(\delta')| \leq \frac{2k|\delta-\delta'|}{3}.
    $$
\end{lem}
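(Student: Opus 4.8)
The plan is to obtain a closed form for $g(\delta)=\mathbf{E}(\bar\eta_i^2)$ and then compare $g(\delta)$ and $g(\delta')$ coefficient by coefficient, so that the Lipschitz constant $2k/3$ drops out of an elementary combinatorial sum.

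\textbf{Step 1 (closed form for $g$).} Expanding the square, $\bar\eta_i^2=\tfrac1{k^2}\sum_{j,l\in I_i}\eta_j\eta_l$, hence $g(\delta)=\tfrac1{k^2}\sum_{j,l\in I_i}\mathbf{E}(\eta_j\eta_l)$. For $j<l$ in the same bucket, iterating the relation $\eta_{r}=X_r\eta_{r-1}$ introduced just before the lemma gives $\eta_l=\eta_j\prod_{r=j+1}^{l}X_r$; since $\eta_j^2=1$, the $X_r$ are i.i.d.\ with mean $1-2\delta$, and they are independent of $\eta_j$, this yields $\mathbf{E}(\eta_j\eta_l)=(1-2\delta)^{|j-l|}$ (trivially true for $j=l$, and extended to $j>l$ by symmetry). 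Counting the $2(k-m)$ ordered pairs at distance $m$ then gives
\[
g(\delta)=\frac1k+\frac{2}{k^2}\sum_{m=1}^{k-1}(k-m)(1-2\delta)^m .
\]

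\textbf{Step 2 (termwise comparison).} For $\delta,\delta'\in(0,1/2)$ set $\rho=1-2\delta$ and $\rho'=1-2\delta'$, both lying in $(0,1)$, so $|\rho-\rho'|=2|\delta-\delta'|$. The factorization $\rho^m-\rho'^m=(\rho-\rho')\sum_{r=0}^{m-1}\rho^{r}\rho'^{\,m-1-r}$ together with $0<\rho,\rho'<1$ bounds each of the $m$ summands by $1$, whence $|\rho^m-\rho'^m|\le m|\rho-\rho'|=2m|\delta-\delta'|$. Substituting into the formula of Step 1,
\[
|g(\delta)-g(\delta')|\le\frac{2}{k^2}\sum_{m=1}^{k-1}(k-m)\,|\rho^m-\rho'^m|\le\frac{4|\delta-\delta'|}{k^2}\sum_{m=1}^{k-1}m(k-m).
\]
Since $\sum_{m=1}^{k-1}m(k-m)=\tfrac{k(k^2-1)}{6}$, the right-hand side equals $\tfrac{2(k^2-1)}{3k}|\delta-\delta'|\le\tfrac{2k}{3}|\delta-\delta'|$, which is exactly the claim.

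The only place that needs a moment of care is the autocorrelation identity $\mathbf{E}(\eta_j\eta_l)=(1-2\delta)^{|j-l|}$, and that follows directly from the i.i.d.\ $X$-representation already available; the remaining ingredients are the elementary geometric-type bound of Step~2 (where one must keep track that every factor $\rho^{r}\rho'^{\,m-1-r}$ stays below $1$, so there are at most $m$ such terms) and the standard identity $\sum_{m=1}^{k-1}m(k-m)=\tfrac{k(k^2-1)}{6}$. Alternatively, one may note that $g$ is a polynomial in $\delta$, differentiate the finite sum to get $g'(\delta)=-\tfrac{4}{k^2}\sum_{m=1}^{k-1}m(k-m)(1-2\delta)^{m-1}$, bound $|g'(\delta)|\le\tfrac{4}{k^2}\sum_{m=1}^{k-1}m(k-m)\le\tfrac{2k}{3}$ on $(0,1/2)$, and conclude via the mean value theorem.
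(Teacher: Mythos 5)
Your proof is correct, and its second half takes a genuinely different (and in fact tighter) route than the paper's. The first step coincides: the closed form you derive from the pairwise correlations $\mathbf{E}(\eta_j\eta_l)=\rho^{|j-l|}$ is exactly the paper's identity $k^2g(\delta)=k+2\sum_{1\le s<s'\le k}\rho^{s'-s}$, obtained there by writing the bucket sum as $\sum_{s}\prod_{i\le s}X_i$. Where you diverge is in extracting the Lipschitz constant: the paper sums the geometric series into a closed rational expression in $\rho$, Taylor-expands $(1-2\delta)^{k+1}$ near $\delta=0$, and bounds $|g(\delta)-g(\delta')|$ by $\bigl|\lim_{\delta\to0}g'(\delta)\bigr|\,|\delta-\delta'|$, invoking monotonicity of $g$; strictly speaking that last step needs the additional observation that $|g'|$ is maximized as $\delta\to0$ (i.e.\ convexity of $g$ in $\delta$, which holds since each $(1-2\delta)^m$ is convex on $(0,1/2)$, but is not spelled out). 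Your termwise bound $|\rho^m-\rho'^m|\le m|\rho-\rho'|$ combined with $\sum_{m=1}^{k-1}m(k-m)=k(k^2-1)/6$ avoids any limiting or convexity argument, is valid uniformly over $\delta,\delta'\in(0,1/2)$, and even delivers the sharper constant $\tfrac{2(k^2-1)}{3k}\le\tfrac{2k}{3}$, which is precisely the paper's $|g'(0)|$; your closing mean-value-theorem variant is essentially the rigorous version of the paper's derivative argument. So both approaches buy the same bound, but yours is more elementary and self-contained, while the paper's Taylor computation additionally exhibits the first-order behavior $g(\delta)=1-\tfrac{2(k^2-1)}{3k}\delta+o(\delta)$, which it reuses as intuition for the choice $k\sim1/\delta$.
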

In particular, and for $\delta'=0$, it follows that $\mathbf{E}(\bar{\eta}_i^2) \geq 1 - 2k\delta/3$. And hence as long as $k\leq 1/\delta$, $\mathbf{E}(\bar{\eta}_i^2)$ is of constant order and the signal is preserved. In other words, as long as $k\leq 1/\delta$,  most of $\eta_i$ are equal within the same bucket with non-zero probability, hence we can average them without a loss in the signal while reducing the variance of the noise.

Consider the Gram matrix of observations $\tilde{Y}\tilde{Y}^T$ given by
$$\tilde{Y}\tilde{Y}^T=\theta\theta^T\|\bar{\eta}\|^2+\frac{w\bar{\eta}\cdot \theta^T+\theta \cdot(w\bar{\eta})^T}{\sqrt{k}}+\frac{1}{k}ww^T$$
and its expected value 
\[
\mathbf{E}[\tilde{Y}\tilde{Y}^T]=\theta\theta^T\mathbf{E}\|\bar{\eta}\|^2+\frac{\ell}{k}\mathbf{I}_d.
\]
For convenience denote $\Sigma \coloneqq \mathbf{E}\left(\frac{1}{\ell}\tilde{Y}\tilde{Y}^T\right)
$ and $\hat{\Sigma} \coloneqq \frac{1}{\ell}\tilde{Y}\tilde{Y}^T$.
Note that $\theta$ is proportional to the top eigenvector of $\Sigma$, meaning that $\lambda_{max}(\Sigma)=\|\theta\|^2\mathbf{E}\|\bar{\eta}\|^2/\ell+\frac{1}{k}$ and the corresponding unit eigenvector $v_{max}(\Sigma)=\frac{\theta}{\|\theta\|}$. Consequently, we consider the following estimator:

\[\label{def: hat_theta2}\hat{\theta}(\ell) \coloneqq \sqrt{\frac{\ell}{\mathbf{E}\|\bar{\eta}\|^2}\left(\lambda_{max}(\hat{\Sigma})-\frac{1}{k}\right)_+} \cdot v_{max}(\hat{\Sigma}).\]
\begin{thm}
\label{thm: known delta loss bound} 
For any values of $\ell \leq n$, the estimator $\hat{\theta}(\ell)$ satisfies 
\begin{equation}\label{eq:technical rates known}
\ell(\hat{\theta}(\ell),\theta) \lesssim
\begin{cases}
\frac{\ell}{\mathbf{E}\|\bar\eta\|^2}\left(\sqrt{\frac{d}{\ell}}\|\theta\| +\sqrt{\frac{d}{n}}
+ \frac{1}{\|\theta\|}\left(\sqrt{\frac{d}{k n}}+ \frac{d}{n}\right)\right) , & \|\theta\|^2 \leq 1\\
\sqrt{\frac{d}{n}} + \frac{d}{n}, & \|\theta\|^2 \geq 1 
\end{cases}
\end{equation}
 with probability greater than $1-c_1\cdot e^{-c_0d}$, and where we take $\ell = n$ in the case $\|\theta\|^2 \geq 1$. 
\end{thm}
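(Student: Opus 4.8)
The plan is to analyze $\hat{\theta}(\ell)$ through the perturbation $E\dfn\hat{\Sigma}-\Sigma$, which by the expansion of $\tilde{Y}\tilde{Y}^{\top}$ displayed above splits into three pieces,
\[
E=\frac{\|\bar{\eta}\|^{2}-\mathbf{E}\|\bar{\eta}\|^{2}}{\ell}\,\theta\theta^{\top}+\frac{w\bar{\eta}\,\theta^{\top}+\theta\,(w\bar{\eta})^{\top}}{\ell\sqrt{k}}+\frac{ww^{\top}-\ell\mathbf{I}_{d}}{\ell k}.
\]
The first step is to bound $\|E\|_{op}$ on an event of probability at least $1-c_{1}e^{-c_{0}d}$, handling the three summands separately. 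Since $\|\bar{\eta}\|^{2}=\sum_{i=1}^{\ell}\bar{\eta}_{i}^{2}$ is a sum of i.i.d.\ $[0,1]$-valued variables, Hoeffding's inequality gives $|\|\bar{\eta}\|^{2}-\mathbf{E}\|\bar{\eta}\|^{2}|\lesssim\sqrt{\ell d}$; conditionally on $\bar{\eta}$, the vector $w\bar{\eta}$ is $\|\bar{\eta}\|$-sub-Gaussian with $\|\bar{\eta}\|\le\sqrt{\ell}$, so a standard sub-Gaussian vector-norm bound gives $\|w\bar{\eta}\|\lesssim\sqrt{\ell d}$; and covariance concentration for $\ell$ i.i.d.\ isotropic sub-Gaussian vectors gives $\|ww^{\top}-\ell\mathbf{I}_{d}\|_{op}\lesssim\sqrt{\ell d}+d$. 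Substituting and using $n=k\ell$ yields
\[
\|E\|_{op}\ \lesssim\ \|\theta\|^{2}\sqrt{d/\ell}+\|\theta\|\sqrt{d/n}+\sqrt{d/(kn)}+d/n.
\]

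The second step is a deterministic perturbation argument on this event. Set $r\dfn\ell/\mathbf{E}\|\bar{\eta}\|^{2}$ and $\hat{s}\dfn\sqrt{r(\lambda_{max}(\hat{\Sigma})-1/k)_{+}}$, so $\hat{\theta}(\ell)=\hat{s}\cdot v_{max}(\hat{\Sigma})$; recall $\lambda_{max}(\Sigma)-1/k=\|\theta\|^{2}/r$ and $\theta=\|\theta\|\,v_{max}(\Sigma)$, so $\Sigma$ has spectral gap $\|\theta\|^{2}/r$ at its top eigenvalue. By the triangle inequality, after optimizing the sign of $v_{max}(\hat{\Sigma})$,
\[
\ell(\hat{\theta}(\ell),\theta)\ \le\ \big|\hat{s}-\|\theta\|\big|\ +\ \|\theta\|\cdot\min_{\pm}\big\|v_{max}(\hat{\Sigma})\mp v_{max}(\Sigma)\big\|.
\]
For the first term, Weyl's inequality together with $1$-Lipschitzness of $x\mapsto(x)_{+}$ gives $|\hat{s}^{2}-\|\theta\|^{2}|\le r\|E\|_{op}$, hence $|\hat{s}-\|\theta\||\le r\|E\|_{op}/(\hat{s}+\|\theta\|)\le r\|E\|_{op}/\|\theta\|$. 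For the second term, provided $\|E\|_{op}$ is at most half the gap $\|\theta\|^{2}/r$, the Davis--Kahan $\sin\Theta$ theorem gives $\min_{\pm}\|v_{max}(\hat{\Sigma})\mp v_{max}(\Sigma)\|\lesssim r\|E\|_{op}/\|\theta\|^{2}$, so this term is also $\lesssim r\|E\|_{op}/\|\theta\|$. Combining with the bound on $\|E\|_{op}$ and recalling $r=\ell/\mathbf{E}\|\bar{\eta}\|^{2}$ reproduces the first branch of \eqref{eq:technical rates known}.

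Two boundary situations must be cleaned up. If the eigengap condition fails, i.e.\ $\|\theta\|^{2}/r\lesssim\|E\|_{op}$, then $\|\theta\|\lesssim\sqrt{r\|E\|_{op}}$, and the crude bound $\ell(\hat{\theta}(\ell),\theta)\le\hat{s}+\|\theta\|$ together with $\hat{s}^{2}\le\|\theta\|^{2}+r\|E\|_{op}$ (again Weyl) gives $\ell(\hat{\theta}(\ell),\theta)\lesssim\sqrt{r\|E\|_{op}}\lesssim r\|E\|_{op}/\|\theta\|$, which is dominated by the same rate. For $\|\theta\|^{2}\ge1$ one specializes to $k=1$, $\ell=n$: then $\bar{\eta}_{i}^{2}\equiv1$, so the first summand of $E$ vanishes and $r=1$, the gap $\|\theta\|^{2}\ge1$ dominates $\|E\|_{op}\lesssim\|\theta\|\sqrt{d/n}+d/n$ (using $d\le n$), and the same two-part estimate collapses to $\ell(\hat{\theta}(n),\theta)\lesssim\sqrt{d/n}+d/n$. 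I expect the main obstacle to be the concentration step — in particular establishing $\|ww^{\top}-\ell\mathbf{I}_{d}\|_{op}\lesssim\sqrt{\ell d}+d$, together with the companion bounds for $\|\bar\eta\|^2$ and $w\bar\eta$, with deviations exponential in $d$ under only a sub-Gaussian assumption rather than Gaussianity — combined with making the perturbation argument genuinely uniform over all $\theta\in\mathbf{R}^{d}$, especially in the weak-signal regime where $v_{max}(\hat{\Sigma})$ is unstable and the crude fallback bound must be invoked.
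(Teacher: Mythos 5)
Your proposal is correct and follows essentially the same route as the paper: the identical three-term decomposition of $\hat{\Sigma}-\Sigma$ with Hoeffding, sub-Gaussian norm, and sub-Gaussian covariance concentration, then Weyl (with Lipschitzness of $(\cdot)_+$) for the norm and Davis--Kahan for the top eigenvector, giving $\ell(\hat{\theta},\theta)\lesssim \frac{\ell}{\mathbf{E}\|\bar\eta\|^2}\|\hat\Sigma-\Sigma\|_{op}/\|\theta\|$, and the same specialization to $\ell=n$ (no bucketing) when $\|\theta\|^2\ge 1$. Your extra care with the weak-gap regime via the crude bound $\hat{s}+\|\theta\|$ is a harmless refinement; the paper sidesteps it by using an unconditional form of the Davis--Kahan bound.
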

Next proposition shows that the lower bound in \cite{yihan} is indeed optimal and can be reached using a fine-tuned estimator of the form $\hat{\theta}(\ell)$. To do that, we consider several values of $\ell$ depending on each particular scenario of $\|\theta\|$, and hence the optimal procedure depends on $\|\theta\|$ as opposed to what was claimed in \cite{yihan}. We  define $\ell^*$ as follows 
\begin{equation}\label{eq:ell*}
    \ell^*:= d \vee \lceil n( \delta \vee \|\theta\|^2) \rceil \wedge n.
\end{equation}

\begin{prop}\label{prop: theoretical rates}
    Assume that $d\leq n$. With probability greater than $1-c_1\cdot e^{-c_0d}$, the estimator $\hat{\theta}(\ell^*)$ with $\ell^*$ defined in \eqref{eq:ell*} satisfies

\[
\ell^2(\hat{\theta}(\ell^*),\theta)\lesssim
\begin{cases}
\sqrt{\frac{\delta d}{n}} + \frac{d}{n}, & \|\theta\|^2 \leq \sqrt{\frac{\delta d}{n}}\vee \frac{d}{n},\\
\frac{\frac{\delta d}{n} }{\|\theta\|^2} , & \sqrt{\frac{\delta d}{n}}\vee \frac{d}{n} \leq \|\theta\|^2 \leq \delta \vee \frac{d}{n},\\
\frac{d}{n}, & \delta \vee \frac{d}{n} \leq \|\theta\|^2. \\
\end{cases}
\label{eqn: rate with delta}
\]
Moreover $\ell^2(\hat{\theta}(\ell^*),\theta)\lesssim \sqrt{\frac{\delta d}{n}}+\frac{d}{n}$, with probability greater than $1-c_1\cdot e^{-c_0 d}$. 
\end{prop}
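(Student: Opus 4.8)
The plan is to feed $\ell=\ell^*$ into Theorem~\ref{thm: known delta loss bound} and then split into the three regimes of $\|\theta\|^2$ appearing in the statement. The first preliminary step is to control the prefactor: since $\ell^*\ge\lceil n\delta\rceil$ by construction (the $d\vee\cdot$ and $\cdot\wedge n$ operations cannot bring it below $\lceil n\delta\rceil$), the bucket length obeys $k^*=n/\ell^*\le1/\delta$, so Lemma~\ref{lem:expecation} with $\delta'=0$ (and $g(0)=1$) gives $g(\delta)\ge1-2k^*\delta/3\ge1/3$; together with $g(\delta)\le1$ this yields $\ell^*/\mathbf{E}\|\bar\eta\|^2=1/g(\delta)\sim1$, so the prefactor in \eqref{eq:technical rates known} is absorbed into the constants. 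I also record $\ell^*\ge d$, which will matter for the matrix concentration, and note that the middle regime $\sqrt{\delta d/n}\vee d/n\le\|\theta\|^2\le\delta\vee d/n$ is nonempty only when $\delta\ge d/n$, in which case it reads $\sqrt{\delta d/n}\le\|\theta\|^2\le\delta$.

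For the two large-signal regimes I would simply substitute $\ell^*$ into \eqref{eq:technical rates known}. If $\|\theta\|^2\ge1$ then $\ell^*=n$ and the second branch gives $\ell(\hat\theta(n),\theta)\lesssim\sqrt{d/n}+d/n\lesssim\sqrt{d/n}$ (using $d\le n$), i.e. $\ell^2\lesssim d/n$. If $\delta\vee d/n\le\|\theta\|^2<1$ then $\ell^*=\lceil n\|\theta\|^2\rceil\sim n\|\theta\|^2$, $k^*\sim1/\|\theta\|^2$, and the first branch collapses to $\ell(\hat\theta(\ell^*),\theta)\lesssim\sqrt{d/n}+d/(n\|\theta\|)$, which is $\lesssim\sqrt{d/n}$ because $\|\theta\|\ge\sqrt{d/n}$; again $\ell^2\lesssim d/n$. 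If $\sqrt{\delta d/n}\le\|\theta\|^2\le\delta$ then $\ell^*=\lceil n\delta\rceil\sim n\delta$, $k^*\sim1/\delta$, and the first branch becomes $\ell(\hat\theta(\ell^*),\theta)\lesssim\sqrt{d/(n\delta)}\,\|\theta\|+\sqrt{d/n}+\|\theta\|^{-1}\bigl(\sqrt{\delta d/n}+d/n\bigr)$; using $\|\theta\|^2\le\delta$ on the first two terms and $d/n\le\delta$ on the last term, each piece is $\lesssim\|\theta\|^{-1}\sqrt{\delta d/n}$, whence $\ell^2\lesssim(\delta d/n)/\|\theta\|^2$. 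This gives the second regime.

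The small-signal regime $\|\theta\|^2\le\sqrt{\delta d/n}\vee d/n$ is the one that \emph{cannot} be obtained from the statement of Theorem~\ref{thm: known delta loss bound} alone, since its first branch contains the term $\|\theta\|^{-1}(\cdots)$, which blows up as $\|\theta\|\to0$; this is the main obstacle. Here I would instead bound $\|\hat\theta(\ell^*)\|$ directly and use $\ell(\hat\theta(\ell^*),\theta)\le\|\hat\theta(\ell^*)\|+\|\theta\|$. From the definition of $\hat\theta(\ell)$, Weyl's inequality, and $\lambda_{max}(\Sigma)=\|\theta\|^2g(\delta)+1/k$, one gets $\|\hat\theta(\ell^*)\|^2=\frac1{g(\delta)}\bigl(\lambda_{max}(\hat\Sigma)-1/k^*\bigr)_+\le\|\theta\|^2+\frac1{g(\delta)}\|\hat\Sigma-\Sigma\|_{op}$. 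The standard sub-Gaussian matrix concentration underlying Theorem~\ref{thm: known delta loss bound}, applied with $\ell=\ell^*\ge d$, gives $\|\hat\Sigma-\Sigma\|_{op}\lesssim\|\theta\|^2\sqrt{d/\ell^*}+\|\theta\|\sqrt{d/n}+\sqrt{d\ell^*}/n+d/n$ with probability $\ge1-c_1e^{-c_0d}$ (the $\theta\theta^{\top}$-fluctuation is $\lesssim\|\theta\|^2\sqrt{d/\ell^*}$ by Hoeffding on $\|\bar\eta\|^2$, the cross term is $\lesssim\|\theta\|\sqrt{d/n}$, and the $ww^{\top}$ term is $\frac1{k^*}(\sqrt{d/\ell^*}+d/\ell^*)=\sqrt{d\ell^*}/n+d/n$). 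Since $\ell^*=d\vee\lceil n\delta\rceil\le d+n\delta$ in this regime we have $\sqrt{d\ell^*}/n\lesssim d/n+\sqrt{\delta d/n}$, and $\ell^*\ge d$ makes $\|\theta\|^2\sqrt{d/\ell^*}\le\|\theta\|^2$; finally $\|\theta\|^2\le\sqrt{\delta d/n}\vee d/n$ also forces $\|\theta\|\sqrt{d/n}\lesssim\sqrt{\delta d/n}+d/n$ (split according to whether $\delta\ge d/n$). Hence $\|\hat\theta(\ell^*)\|^2\lesssim\|\theta\|^2+\sqrt{\delta d/n}+d/n\lesssim\sqrt{\delta d/n}+d/n$, and squaring the triangle inequality yields $\ell^2(\hat\theta(\ell^*),\theta)\lesssim\sqrt{\delta d/n}+d/n$, the first regime.

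The ``moreover'' claim is then immediate on the same event: the attained rate is $\sqrt{\delta d/n}+d/n$, $(\delta d/n)/\|\theta\|^2$, and $d/n$ in the three regimes, and in the middle regime $\|\theta\|^2\ge\sqrt{\delta d/n}$ makes $(\delta d/n)/\|\theta\|^2\le\sqrt{\delta d/n}$, so in every case $\ell^2(\hat\theta(\ell^*),\theta)\lesssim\sqrt{\delta d/n}+d/n$. Beyond the small-signal argument, the only points needing care are the $n=k^*\ell^*$ divisibility convention (choose $\ell^*$ to be a nearby divisor of $n$, which changes nothing up to constants), the uniform verification that $k^*\le1/\delta$ across all branches of $\ell^*$ so that $g(\delta)\sim1$, and propagating the $e^{-c_0d}$ tail through the matrix concentration, which is legitimate precisely because $\ell^*\ge d$.
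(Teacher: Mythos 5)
Your proposal is correct and follows essentially the same route as the paper: feed $\ell^*$ into Theorem~\ref{thm: known delta loss bound} (with $\mathbf{E}\|\bar\eta\|^2\gtrsim\ell^*$ from Lemma~\ref{lem:expecation}) in the moderate- and large-signal regimes, and in the small-signal regime avoid the $1/\|\theta\|$ blow-up by bounding $\bigl|\|\hat{\theta}\|^2-\|\theta\|^2\bigr|$ through Weyl's inequality and the operator-norm concentration before applying the triangle inequality. This is exactly the paper's argument, whose small-signal step $\ell^2(\hat{\theta},\theta)\le 2\|\hat{\theta}\|^2+2\|\theta\|^2\lesssim \|\hat{\Sigma}-\Sigma\|_{op}\cdot\ell^*/\mathbf{E}\|\bar\eta\|^2+\|\theta\|^2$ is the squared form of your bound.
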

\begin{rem}
 The global minimax rate happens around the value $\|\theta\|^2 = \sqrt{\frac{\delta d}{n}}\vee \frac{d}{n}$ in  \eqref{eqn: rate with delta}, which corresponds to the worst case scenario. It is also clear that for $\delta \leq d/n$, the upper bound in \eqref{eqn: rate with delta} corresponds to the parametric rate of estimation $d/n$ and hence we do not get further improvement in estimation for $\delta \leq d/n$. In summary, the dependence helps get better estimation rates only in the regime $d/n \leq \delta < 1/2$.
\end{rem}

\begin{rem}
    The choice $k = \lfloor \frac{1}{\delta} \rfloor $ (or $\ell= \lceil n\delta \rceil$) is reasonable, since it is aligned with the mixing time of the Markov Chain $\eta$ and that is of order $1/\delta$. In other words, within each buckets the labels are more likely to share the same sign as long as the bucket size is smaller than $1/\delta$. So we may want to choose the smallest $\ell$ such that $\ell \geq n\delta$. Based on \eqref{eq:technical rates known}, it seems that the error term due to the misspecification around $\mathbf{E}\|\bar{\eta}\|$ gets worse as $\ell$ gets smaller (especially for large values of $\|\theta\|$). This explains why the choice of $\ell$ needs to depend both on $\delta$ and $\|\theta\|$.
\end{rem}

Here $\hat{\theta}(\ell^*)$ carries a theoretical interest as it shows that local minimax optimality is possible. It also lays a foundation for constructing adaptive procedures. 
Observing the proof one can point out that the optimal choice of $\ell^*$ depends on the signal strength $\|\theta\|$ and flip probability $\delta$. 
In practice, neither do we have knowledge about the signal strength $\|\theta\|$, as  $\theta$ is unknown, nor do we know $\delta$. So the first step to progress is to construct a procedure without assuming any knowledge about $\theta$, which is going to be the goal of next section.

\section{Adaptive algorithm for known $\delta$ }{\label{sec: adaptive}}

In order to build a semi adaptive procedure, it is very natural to consider plug-in estimators of $\|\theta\|$. In what follows we take the most reasonable approach, which is estimating $\|\theta\|$ with $\hat{s}$ such that
\begin{equation}\label{hat s}
    \hat{s}:=\|\hat{\theta}(d\vee \lceil n\delta \rceil \wedge n)\|.
\end{equation} First, we begin with a simple Lemma and a corresponding Corollary which provides a basis for our $\theta$ adaptive procedure.
\begin{lem}
\label{lem: s and theta}

Suppose $\ell= d \vee \lceil n\delta\rceil \wedge n $. Then there exists a constant $C$ large enough such that with probability $1-c_1e^{-c_0d}$ we get the following:
\begin{itemize}
    \item If $\delta\vee Cd/n\leq\|\theta\|^2 $, then

    \[\frac{\|\theta\|}{2}\leq\hat{s}\leq\frac{3\|\theta\|}{2} \label{eqn: order of s},\]
    \item and if $\delta \vee Cd/n > \|\theta\|^2 $, then \[\hat{s}^2 < 3(\delta \vee Cd/n) \label{eqn:less3delta}.\]
\end{itemize}

\end{lem}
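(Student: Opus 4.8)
The plan is to apply Theorem~\ref{thm: known delta loss bound} with the specific choice $\ell = \ell_1 := d \vee \lceil n\delta\rceil \wedge n$, and then translate the bound on $\ell(\hat\theta(\ell_1),\theta)$ into a two-sided control of $\hat s = \|\hat\theta(\ell_1)\|$ by the reverse triangle inequality $\big|\,\hat s - \|\theta\|\,\big| \le \ell(\hat\theta(\ell_1),\theta)$. So the whole statement reduces to showing that, for this choice of $\ell_1$, the right-hand side of~\eqref{eq:technical rates known} is bounded (up to a constant) by $\|\theta\|/2$ in the first regime and by something like $\sqrt{\delta \vee Cd/n}$ in the second. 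I would first record that with $\ell_1 = d \vee \lceil n\delta\rceil \wedge n$ one has $k = n/\ell_1 \le 1/\delta$ (up to rounding), so by Lemma~\ref{lem:expecation} applied with $\delta'=0$ the quantity $\mathbf{E}\|\bar\eta\|^2/\ell_1 = g(\delta)$ is of constant order, i.e.\ $\mathbf{E}\|\bar\eta\|^2 \sim \ell_1$; this kills all the $\ell/\mathbf{E}\|\bar\eta\|^2$ prefactors and makes the bound in~\eqref{eq:technical rates known} read, in the regime $\|\theta\|^2\le 1$,
\[
\ell(\hat\theta(\ell_1),\theta) \lesssim \sqrt{\tfrac{d}{\ell_1}}\,\|\theta\| + \sqrt{\tfrac{d}{n}} + \tfrac{1}{\|\theta\|}\Big(\sqrt{\tfrac{d}{kn}} + \tfrac{d}{n}\Big).
\]
Substituting $\ell_1 \asymp d \vee n\delta$ and $k \asymp 1 \wedge 1/\delta$, the first term is $\lesssim \sqrt{d/(n\delta)}\,\|\theta\| \wedge \|\theta\|$, the third is $\lesssim \tfrac{1}{\|\theta\|}\big(\sqrt{\delta d/n} \vee \sqrt{d/n} \ \text{-type terms} + d/n\big)$; a short case analysis on whether $d \ge n\delta$ or not reduces everything to powers of $\delta d/n$ and $d/n$.

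For the first bullet, assume $\|\theta\|^2 \ge \delta \vee Cd/n$ (still inside $\|\theta\|^2 \le 1$, the case $\|\theta\|^2\ge 1$ being handled by the second line of~\eqref{eq:technical rates known}, which gives $\ell(\hat\theta(\ell_1),\theta)\lesssim \sqrt{d/n}+d/n \le$ small compared to $\|\theta\|$ once $C$ is large). Then $\sqrt{d/\ell_1}\,\|\theta\| \le \sqrt{d/(n\delta)}\,\|\theta\|$, and since $\delta \le \|\theta\|^2$ this is $\le \|\theta\|\cdot\|\theta\|/\sqrt{n/d}\cdot\|\theta\|^{-1}\dots$ — more cleanly, each of the three terms is $\lesssim \|\theta\|/\sqrt{C}$ using $d/n \le \|\theta\|^2/C$ and $\delta \le \|\theta\|^2$ (for the third term, $\tfrac{1}{\|\theta\|}\sqrt{d/(kn)} \lesssim \tfrac{1}{\|\theta\|}\sqrt{\delta d/n} \le \tfrac{1}{\|\theta\|}\cdot \|\theta\|\sqrt{d/n} \lesssim \|\theta\|/\sqrt{C}$, and $\tfrac{1}{\|\theta\|}\cdot d/n \le \|\theta\|/C$). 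Choosing $C$ large enough makes the sum $\le \|\theta\|/2$, whence $\|\theta\|/2 \le \hat s \le 3\|\theta\|/2$.

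For the second bullet, assume $\|\theta\|^2 < \delta \vee Cd/n$. Now I cannot divide by $\|\theta\|$ freely, so I would instead bound $\hat s \le \|\theta\| + \ell(\hat\theta(\ell_1),\theta)$ and control each term crudely by $\sqrt{\delta \vee Cd/n}$: the first since $\|\theta\|^2 < \delta \vee Cd/n$ by assumption, and the second by observing that the worst case of~\eqref{eq:technical rates known} over $\|\theta\|^2 \le \delta\vee Cd/n$ — which is exactly the regime where the rate function $\phi$ plateaus at $\sqrt{\delta d/n} + d/n$ — is $\lesssim \sqrt{\delta d/n} + d/n \le \sqrt{\delta\vee Cd/n}\cdot(\text{const})$; here one uses $\sqrt{\delta d/n} \le \sqrt{\delta}\le\sqrt{\delta\vee Cd/n}$ and $d/n \le \sqrt{d/n} \le \sqrt{\delta\vee Cd/n}$ since $d/n\le 1$. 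Squaring gives $\hat s^2 \le 2\|\theta\|^2 + 2\ell^2(\hat\theta(\ell_1),\theta) < 3(\delta\vee Cd/n)$ after possibly enlarging $C$. The main obstacle is purely bookkeeping: one must verify that the term $\tfrac{1}{\|\theta\|}(\sqrt{d/(kn)} + d/n)$ in~\eqref{eq:technical rates known}, which blows up as $\|\theta\|\to 0$, does not actually appear in the small-signal regime because there the estimator thresholds to (essentially) zero — so in that regime the relevant bound is the flat $\sqrt{\delta d/n}+d/n$ coming from $\hat\theta(\ell_1)$ being small, not the displayed expression with a $1/\|\theta\|$ in it. Making this precise requires going back into the proof of Theorem~\ref{thm: known delta loss bound} (or invoking Proposition~\ref{prop: theoretical rates} with $\ell=\ell^*$ reduced to $\ell_1$ when $\delta \ge \|\theta\|^2$), but the $1/\|\theta\|$ terms are genuinely only meaningful when multiplied against a signal that is present, and the careful statement of the theorem's regimes is exactly what lets us sidestep the division-by-zero.
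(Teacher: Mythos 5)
Your first bullet follows the paper's own route: bound $|\hat s-\|\theta\||\le \ell(\hat\theta(\ell_1),\theta)$, plug the choice $\ell_1\asymp d\vee n\delta$ (so $k\asymp 1/\delta\wedge n/d$ and $\mathbf{E}\|\bar\eta\|^2\sim\ell_1$ by Lemma \ref{lem:expecation}) into the bound of Theorem \ref{thm: known delta loss bound}, and absorb every term into $\|\theta\|/2$ using $\delta\vee Cd/n\le\|\theta\|^2$ with $C$ large. That part is fine and is essentially identical to the paper.

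The second bullet, however, has a genuine gap, and it is exactly the one you flag yourself. The displayed bound \eqref{eq:technical rates known} contains the factor $\frac{1}{\|\theta\|}\bigl(\sqrt{d/(kn)}+d/n\bigr)$, so its ``worst case over $\|\theta\|^2\le\delta\vee Cd/n$'' is not $\sqrt{\delta d/n}+d/n$; it is unbounded as $\|\theta\|\to 0$, and the patches you sketch do not close this. The heuristic that ``the estimator thresholds to essentially zero'' is not the mechanism (nothing forces $\lambda_{\max}(\hat\Sigma)-1/k$ to vanish for small $\|\theta\|$), and invoking Proposition \ref{prop: theoretical rates} is both circular in spirit and not literally applicable, since it is stated for $\ell^{*}$, which depends on $\|\theta\|$ and coincides with $\ell_1$ only when $\|\theta\|^2\le\delta$. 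The paper's proof avoids the division by $\|\theta\|$ altogether in this regime: since $\hat s^2=\frac{\ell_1}{\mathbf{E}\|\bar\eta\|^2}\bigl(\lambda_{\max}(\hat\Sigma)-\tfrac1k\bigr)_+$ and $\|\theta\|^2=\frac{\ell_1}{\mathbf{E}\|\bar\eta\|^2}\bigl(\lambda_{\max}(\Sigma)-\tfrac1k\bigr)$, Weyl's inequality gives directly
\[
\bigl|\hat s^2-\|\theta\|^2\bigr|\;\le\;\frac{\ell_1}{\mathbf{E}\|\bar\eta\|^2}\,\|\hat\Sigma-\Sigma\|_{op}
\;\lesssim\;\sqrt{\tfrac{d}{\ell_1}}\,\|\theta\|^2+\sqrt{\tfrac{d}{n}}\,\|\theta\|+\tfrac{d}{n}+\tfrac1k\sqrt{\tfrac{d}{\ell_1}},
\]
by \eqref{eqn: final operator concentration}, in which $\|\theta\|$ appears only with nonnegative powers. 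With $\ell_1\asymp n\delta\vee d$ and $\|\theta\|^2<\delta\vee Cd/n$, each term is at most a small constant times $\delta\vee Cd/n$, so $\bigl|\hat s^2-\|\theta\|^2\bigr|\le 2(\delta\vee Cd/n)$ and hence $\hat s^2<3(\delta\vee Cd/n)$, which is \eqref{eqn:less3delta}. So the missing idea in your write-up is to compare \emph{squared} norms through the eigenvalue perturbation bound (the step in the proof of Theorem \ref{thm: known delta loss bound} before one divides by $\|\theta\|$), rather than trying to reuse the final loss bound; without that step your argument for the second bullet does not go through as written.
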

As a result, we derive the next Corollary that shows that comparing $\hat{s}$ to $\delta\vee Cd/n$ is similar to comparing $\|\theta\|$ with $\delta\vee Cd/n$. 

\begin{cor}\label{cor: 3delta}
Suppose that $\ell= d \vee \lceil n \delta \rceil \wedge n$. Then for some $C>0$ large enough, with probability $1-c_1e^{-c_0d}$, we have the following:
\begin{itemize}
    \item If $\hat{s}^2 <  3(\delta\vee Cd/n)$ we get that $\|\theta\|^2<12(\delta\vee Cd/n),$
    \item and if $\hat{s}^2 \geq  3(\delta\vee Cd/n)$ we get that $ \|\theta\|^2 \geq \delta \vee Cd/n$.
\end{itemize}
\end{cor}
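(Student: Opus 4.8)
The plan is to derive Corollary~\ref{cor: 3delta} directly from Lemma~\ref{lem: s and theta} by contraposition, with the same constant $C$ as in that lemma, and working throughout on the event of probability at least $1-c_1 e^{-c_0 d}$ on which the conclusions of Lemma~\ref{lem: s and theta} hold. No new probabilistic argument is needed: once we are on this event, everything is deterministic, and the two bullets of the corollary are simply the contrapositives of the two cases of the lemma, up to bookkeeping of multiplicative constants.

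For the second bullet, suppose $\hat{s}^2 \geq 3(\delta \vee Cd/n)$ and assume, for contradiction, that $\|\theta\|^2 < \delta \vee Cd/n$. Then the second case of Lemma~\ref{lem: s and theta} applies and gives $\hat{s}^2 < 3(\delta \vee Cd/n)$, contradicting the hypothesis. Hence $\|\theta\|^2 \geq \delta \vee Cd/n$, as claimed.

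For the first bullet, suppose $\hat{s}^2 < 3(\delta \vee Cd/n)$ and assume, for contradiction, that $\|\theta\|^2 \geq 12(\delta \vee Cd/n)$. In particular $\|\theta\|^2 \geq \delta \vee Cd/n$, so the first case of Lemma~\ref{lem: s and theta} yields $\hat{s} \geq \|\theta\|/2$, whence $\hat{s}^2 \geq \|\theta\|^2/4 \geq 3(\delta \vee Cd/n)$, again contradicting the hypothesis. Therefore $\|\theta\|^2 < 12(\delta \vee Cd/n)$.

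There is no real obstacle here; the only point to watch is the arithmetic of the constants. The regimes in Lemma~\ref{lem: s and theta} are separated by the threshold $\|\theta\|^2 = \delta \vee Cd/n$, while in the corollary we test $\hat{s}^2$ against $3(\delta\vee Cd/n)$; the factor $(1/2)^{-2}=4$ lost when passing from the bound $\hat s \geq \|\theta\|/2$ to a bound on $\hat s^2$ is what forces the constant on the $\|\theta\|^2$ side to inflate from $3$ to $12$. Keeping $C$ identical to the one in Lemma~\ref{lem: s and theta} and tracking these multiplicative factors is all that the proof requires.
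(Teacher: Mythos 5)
Your proposal is correct and is essentially the paper's own argument: both bullets are deduced from the two cases of Lemma \ref{lem: s and theta} on its high-probability event, with the factor $4$ from $\hat{s}\geq\|\theta\|/2$ accounting for the constant $12$; phrasing it by contradiction rather than the paper's direct case split is an immaterial difference.
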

\begin{proof}
We shall treat the two cases separately. 
Consider first the case $\hat{s}^2 <  3(\delta\vee Cd/n)$.   If $\delta\vee Cd/n \leq \|\theta\|^2$ then \eqref{eqn: order of s} holds, and we get $\|\theta\|^2\leq4\hat{s}^2 < 12(\delta\vee Cd/n)$. If $\|\theta\|^2<\delta \vee Cd/n$, then obviously $\|\theta\|^2 < 12(\delta\vee Cd/n)$ as well. As for the second case $\hat{s}^2\geq 3(\delta\vee Cd/n)$, then we must have $\|\theta\|^2 \geq \delta \vee Cd/n$, otherwise we get a contradiction with \eqref{eqn:less3delta}.
\end{proof}

Having various permissions to replace $\|\theta\|$ with $\hat{s}$, we give an adaptive 2-step algorithm. The pseudocode of the algorithm is given in Algorithm \ref{alg: known delta}.

\begin{algorithm}[!ht]
\caption{Adaptive mean estimation for known $\delta$ \label{alg: known delta}}
\begin{algorithmic}[1]
\item[]
\State \textbf{input:}  Observations $Y_1,Y_2,...,Y_n$ from the model \eqref{eqn: model}, and flip probability $\delta$.
\item[]
\State $\ell_1 \gets d \vee \lceil \delta n \rceil \wedge n $
\item[]
\State $\hat{s} \gets \sqrt{\frac{\ell_1}{\mathbf{E}\|\bar{\eta}\|^2}\left(\lambda_{max}(\hat{\Sigma})-\frac{1}{k}\right)_+} $
\item[]
\State $\ell_2 \gets d \vee \lceil n( \delta \vee \hat{s}^2) \rceil \wedge n$
\item[]
\State $\hat{\theta}(\ell_2) \gets \sqrt{\frac{\ell_2}{\mathbf{E}\|\bar{\eta}\|^2}\left(\lambda_{max}(\hat{\Sigma})-\frac{1}{k}\right)_+} \cdot v_{max}(\hat{\Sigma})$
\item[]
\State\Return $\hat{\theta}=\hat{\theta}(\ell_2)$
\item[]
\end{algorithmic}

\end{algorithm} 

In  what follows Theorem \ref{thm: adaptive errors} confirms that our semi adaptive procedure is locally minimax optimal. The proof follows a similar analysis done in Proposition \ref{prop: theoretical rates} and can be found in the Appendix. We will now define the adaptive choice of  $\ell$ as follows 
\begin{equation}\label{eq:hat ell}
    \hat{\ell}:= d\vee  \lceil n( \delta \vee \hat{s}^2)  \rceil \wedge n.
\end{equation}

\begin{thm}
\label{thm: adaptive errors}
When $d\leq n$, the proposed estimator $\hat{\theta}(\hat{\ell})$ with $\hat{\ell}$ defined in \eqref{hat s}-\eqref{eq:hat ell} satisfies 
\[
\ell^2(\hat{\theta}(\hat{\ell}),\theta)\lesssim
\begin{cases}
\sqrt{\frac{\delta d}{n}} +\frac{d}{n}, & \|\theta\|^2 \leq \sqrt{\frac{\delta d}{n}}\vee \frac{d}{n},\\
\frac{\frac{\delta d}{n}} {\|\theta\|^2} , & \sqrt{\frac{\delta d}{n}}\vee \frac{d}{n} \leq \|\theta\|^2 \leq \delta \vee \frac{d}{n},\\
\frac{d}{n}, & \delta \vee \frac{d}{n} \leq \|\theta\|^2, \\
\end{cases}
\label{eqn: rate with delta2}
\]
with probability larger than $1-c_1\log(n/d) e^{-c_0d}$.
\\Globally $\ell^2(\hat{\theta}(\hat{\ell}),\theta)\lesssim \sqrt{\frac{\delta d}{n}} + \frac{d}{n}$, with probability larger than $1-c_1\log(n/d) e^{-c_0d}$. 
\end{thm}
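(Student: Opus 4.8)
The plan is to combine the two-stage structure of Algorithm~\ref{alg: known delta} with a version of Theorem~\ref{thm: known delta loss bound} that is uniform in $\ell$. Let $\mathcal{E}_1$ be the event on which the conclusions of Lemma~\ref{lem: s and theta} hold, so $\P(\mathcal{E}_1)\ge 1-c_1 e^{-c_0 d}$. On $\mathcal{E}_1$ the data-driven index $\hat\ell$ of \eqref{eq:hat ell} and the oracle index $\ell^*$ of \eqref{eq:ell*} agree up to an absolute multiplicative constant: if $\|\theta\|^2\ge\delta\vee Cd/n$ then $\hat s^2\in[\|\theta\|^2/4,\,9\|\theta\|^2/4]$, so $\delta\vee\hat s^2$ and $\delta\vee\|\theta\|^2$ are comparable; if $\|\theta\|^2<\delta\vee Cd/n$ then $\hat s^2<3(\delta\vee Cd/n)$ and, by Corollary~\ref{cor: 3delta}, $\|\theta\|^2<12(\delta\vee Cd/n)$, so both $\hat\ell$ and $\ell^*$ lie in the window $[\,d\vee\lceil n\delta\rceil\wedge n,\ C'(d\vee\lceil n\delta\rceil)\wedge n\,]$. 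In all cases $d\le\hat\ell\le n$, $\hat\ell\gtrsim n\delta$, and $\hat\ell\in[\ell^*/C'',\,C''\ell^*]$ for an absolute constant $C''$.

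The core step is to upgrade Theorem~\ref{thm: known delta loss bound} to a uniform statement: with probability at least $1-c_1\log(n/d)\,e^{-c_0 d}$, the bound \eqref{eq:technical rates known} holds for $\hat\theta(\ell)$ simultaneously for every $\ell\in[\,d\vee\lceil n\delta\rceil\wedge n,\ n\,]$; call this event $\mathcal{E}_2$. Since this range is fixed (it does not involve $\theta$), it is covered by $O(\log(n/d))$ dyadic blocks $[2^j,2^{j+1}]$, and applying the concentration inequalities underlying Theorem~\ref{thm: known delta loss bound} at each dyadic endpoint and union bounding is exactly where the logarithmic factor enters. Within a single block, the passage to non-endpoint values of $\ell$ is handled by monotonicity: after substituting $k=n/\ell$ and using $\E\|\bar\eta\|^2\asymp\ell$ for $\ell\ge\lceil n\delta\rceil$ (Lemma~\ref{lem:expecation} with $\delta'=0$ gives $\E\bar\eta_i^2\ge 1-2k\delta/3\ge 1/3$), the right-hand side of \eqref{eq:technical rates known} in the regime $\|\theta\|^2\le 1$ reads $\sqrt{d/\ell}\,\|\theta\|+\sqrt{d/n}+\|\theta\|^{-1}\big(\sqrt{d\ell}/n+d/n\big)$, a sum of one term decreasing in $\ell$, one increasing in $\ell$, and two terms constant in $\ell$, each stable up to a factor $\sqrt 2$ across a dyadic block; the same is true of the deviation quantities $|\,\|\bar\eta\|^2-\E\|\bar\eta\|^2|$, $\|w\bar\eta\|$ and $\|ww^\top-\ell\mathbf{I}_d\|_{op}$ from which these bounds are built (the last splitting into an $\ell$-independent piece scaled by $\ell/n$ and a within-bucket cross term monotone in the bucket size). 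The regime $\|\theta\|^2\gtrsim 1$, where $\ell^*=n$, is handled the same way on the window $[cn,n]$.

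On $\mathcal{E}_1\cap\mathcal{E}_2$ we now plug $\ell=\hat\ell$ into \eqref{eq:technical rates known}. Since $\hat\ell\in[\ell^*/C'',C''\ell^*]$ and the bound is stable under constant-factor changes of $\ell$, we get $\ell(\hat\theta(\hat\ell),\theta)\lesssim$ the right-hand side of \eqref{eq:technical rates known} evaluated at $\ell=\ell^*$. Simplifying this expression in each of the three ranges of $\|\theta\|^2$ is exactly the case analysis carried out to derive \eqref{eqn: rate with delta} in Proposition~\ref{prop: theoretical rates} (using $d\le\ell^*\le n$ and $n\delta\lesssim\ell^*$): it collapses to $\sqrt{\delta d/n}+d/n$, to $(\delta d/n)/\|\theta\|^2$, and to $d/n$ respectively. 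Squaring yields \eqref{eqn: rate with delta2} on $\mathcal{E}_1\cap\mathcal{E}_2$, whose probability is at least $1-c_1e^{-c_0d}-c_1\log(n/d)e^{-c_0d}\ge 1-c_1'\log(n/d)e^{-c_0 d}$ after relabelling the absolute constants. The global bound $\ell^2(\hat\theta(\hat\ell),\theta)\lesssim\sqrt{\delta d/n}+d/n$ is then immediate, since the supremum over $\theta$ of the three-regime rate is attained at $\|\theta\|^2=\sqrt{\delta d/n}\vee d/n$ and equals $\sqrt{\delta d/n}+d/n$.

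The main obstacle is obtaining the uniform-in-$\ell$ control of $\mathcal{E}_2$ at a cost of only a logarithmic factor: a naive union bound over all admissible integer values of $\ell$ would cost a factor of order $n$, so the argument genuinely needs both the first-stage localisation of $\hat\ell$ to a constant window around $\ell^*$ (Lemma~\ref{lem: s and theta}) and the monotone-in-$\ell$ structure of the error contributions, so that union bounding over $O(\log(n/d))$ dyadic scales suffices. Making the within-block extension of the concentration inequalities fully rigorous — rather than only for the final bound — is the delicate part; an alternative route is to re-run the proof of Theorem~\ref{thm: known delta loss bound} directly at the random index $\hat\ell$, conditioning on $\hat s$ (computed at the fixed index $\ell_1$) and noting that $\bar\eta$, $w$ and $\tilde Y$ at level $\hat\ell$ are measurable functions of the same $n$ observations whose relevant quadratic functionals were already controlled over the dyadic grid.
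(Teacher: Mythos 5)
Your overall architecture is the paper's: localize $\hat\ell$ to a constant-factor window around $\ell^*$ via Lemma \ref{lem: s and theta} and Corollary \ref{cor: 3delta}, pay a $\log(n/d)$ factor for the data dependence of $\hat\ell$ on $\hat\Sigma$, then rerun the case analysis of Proposition \ref{prop: theoretical rates}. The gap is in your event $\mathcal{E}_2$. You claim uniformity of \eqref{eq:technical rates known} over \emph{all} integers $\ell\in[d\vee\lceil n\delta\rceil\wedge n,\,n]$ by union-bounding over $O(\log(n/d))$ dyadic endpoints and then asserting that, within a dyadic block, the deviation quantities $|\|\bar\eta\|^2-\E\|\bar\eta\|^2|$, $\|w\bar\eta\|$ and $\|ww^\top/\ell-\mathbf{I}_d\|_{op}$ are ``stable up to a factor $\sqrt2$.'' That step is unjustified: for two bucket counts $\ell\neq\ell'$ inside the same block, $\bar\eta$ and $w$ are built from \emph{different partitions} of the same raw sample; they are genuinely different random variables, not monotone or otherwise comparable functions of the endpoint quantities, so controlling them at $2^jd$ and $2^{j+1}d$ gives no control at an intermediate $\ell$ (monotonicity holds for the deterministic bound in \eqref{eq:technical rates known}, not for the random deviations). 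Your fallback — re-running the proof of Theorem \ref{thm: known delta loss bound} at $\hat\ell$ while conditioning on $\hat s$ — does not repair this: $\hat s$ and the bucketed quantities at level $\hat\ell$ are computed from the same observations, so conditioning on $\hat s$ does not decouple $\hat\ell$ from $\hat\Sigma$, and the quadratic functionals controlled on the grid are not the ones appearing at a non-grid value of $\hat\ell$.

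The paper avoids the problem by putting $\hat\ell$ itself on the grid: in its proof it takes $\hat\ell=d2^{\hat m}\wedge n$ with $\hat m$ the smallest integer such that $d2^{\hat m}\geq\lceil n(3\delta\vee\hat s^2)\vee Cd\rceil\wedge n$. Then $\hat\ell$ takes at most $\log_2(n/d)+1$ deterministic values, Lemma \ref{lem:concentration of three terms} is union-bounded over exactly those values (this is the sole source of the $\log(n/d)$ factor), and since rounding changes $\ell$ only by a factor of $2$ the rates from the case analysis are unaffected. If you adopt that modification (or sample-split, as the remark after the theorem suggests), the rest of your argument — the comparability $\hat\ell\asymp\ell^*$, the plug-in into \eqref{eq:technical rates known}, and the three-regime simplification — goes through and coincides with the paper's; as written, the within-block extension is a genuine hole.
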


\begin{rem}
    The result of Theorem \ref{thm: adaptive errors} holds with a slightly weaker probability. The logarithmic loss is due to the dependence between $\hat{\Sigma}$ and $\hat\ell$. One way to get rid of that dependence is by splitting the main sample into two equal sub-samples, and then using one for estimation of $\hat{\ell}$ and the other one for the construction of $\hat\theta(\hat\ell)$.
\end{rem}
We conclude that minimax optimal rates can be achieved adaptively to the signal $\theta$ both locally and globally. The optimal procedure depends only on $\delta$ through the choice of $\hat{\ell}$ and $g(\delta)$.

\section{Mean estimation for unknown $\delta$}\label{sec:fully adaptive}

Taking a closer look at our estimator $\hat{\theta}(\ell)$ we recall that it depends on $g(\delta)=\mathbf{E}(\|\bar\eta\|^2)$, which can be theoretically computed with the knowledge of the flip probability $\delta$. At this stage it might be tempting to replace $\delta$ by some plug-in estimator $\hat{\delta}$. In order to estimate $\delta$ observe that
$$
\mathbf{E}(\eta_{1}\eta_{2}) = (1-2\delta)\|\theta\|^2.
$$
Under Gaussian noise, we can show, as in \cite{yihan}, that a lower bound of estimation for $\delta$ is given by $1/\sqrt{n}$ for any $\|\theta \|\leq 1$. While this lower bound is far from being optimal it shows that in general we can not expect to get $\hat{\delta}$ such that $|\hat\delta - \delta | \leq  \delta/2$. This heuristic explains that using plug-in estimators of $\delta$ might not lead to optimal rates. 

An alternative approach is to completely bypass estimation of $\delta$. We start by proposing a modified version of the estimator $\hat{\theta}(\ell)$ where we replace $g(\delta)$ by $1$ which in turn leads to a weaker dependence on $\delta$. This choice is equivalent to considering the new signal $\Sigma$ such that
$$\Sigma=\theta\theta^T+\frac{\mathbf{I}_d}{k},
$$
and $$\hat{\Sigma}=\frac{1}{\ell}YY^\top=\theta\theta^T + \theta\theta^T\left(\frac{\|\bar{\eta}\|^2}{\ell} -1\right)+\frac{(w\bar{\eta})\theta^T+\theta(w\bar{\eta})^T}{\sqrt{k}\ell}+\frac{ww^T}{k\ell}.
$$
This motivates our new procedure $\tilde{\theta}(\ell)$ defined by $$\tilde{\theta}(\ell)=\sqrt{\left(\lambda_{max}(\hat{\Sigma})-\frac{1}{k}\right)_+} \cdot v_{max}(\hat{\Sigma}),$$
where the dependence on $\mathbf{E}\|\bar\eta\|^2$ was dropped. The next Theorem highlights  the cost of this operation. 
\begin{thm}\label{thm: unkown delta first loss bound}
For any value of $\ell\leq n$,  with probability greater than $1-c_1\cdot e^{-c_0d}$, the estimator $\tilde{\theta}(\ell)$ achieves the following:
\begin{equation}\label{eq:okok}
\ell(\tilde{\theta}(\ell),\theta) \lesssim
\begin{cases}
\left(\sqrt{\frac{d}{\ell}} + \frac{n\delta}{\ell}\right)\|\theta\| +\sqrt{\frac{d}{n}}
+ \frac{1}{\|\theta\|}\left(\sqrt{\frac{1}{k}}\sqrt{\frac{d}{n}} + \frac{d}{n}\right) , & \|\theta\|^2 \leq 1\\
\sqrt{\frac{d}{n}} + \frac{d}{n}, & \|\theta\|^2 \geq 1 ,
\end{cases}
\end{equation}
where we take $\ell = n$ in the case $\|\theta\|^2 \geq 1$. 
\end{thm}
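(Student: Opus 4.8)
The plan is to mirror the perturbation-theoretic argument behind Theorem \ref{thm: known delta loss bound}, but now tracking the extra bias that arises from replacing $\mathbf{E}\|\bar\eta\|^2/\ell$ by $1$. Write $\hat\Sigma = \theta\theta^\T + E$ where, from the displayed decomposition,
\[
E = \theta\theta^\T\!\left(\tfrac{\|\bar\eta\|^2}{\ell}-1\right) + \tfrac{(w\bar\eta)\theta^\T+\theta(w\bar\eta)^\T}{\sqrt{k}\,\ell} + \tfrac{ww^\T}{k\ell},
\]
and the ``signal'' we are trying to recover the top eigenpair of is $\Sigma = \theta\theta^\T + \mathbf{I}_d/k$. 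The first step is to bound $\|E\|_{op}$ (and the relevant projected quantities) on an event of probability $1 - c_1 e^{-c_0 d}$. For the last two terms this is exactly the sub-Gaussian matrix concentration already used for Theorem \ref{thm: known delta loss bound}: $\|ww^\T/(k\ell) - \mathbf{I}_d/k\|_{op} \lesssim \frac1k(\sqrt{d/\ell} + d/\ell)$ and the cross term is $\lesssim \frac{\|\theta\|}{\sqrt k}(\sqrt{d/\ell}+\sqrt{1/\ell})$ after controlling $\|w\bar\eta\|$ via independence of $w$ and $\bar\eta$ together with $\|\bar\eta\|^2 \lesssim \ell$ w.h.p. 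The genuinely new term is the rank-one bias $\theta\theta^\T(\|\bar\eta\|^2/\ell - 1)$: here I would use that $\bar\eta_i^2$ are i.i.d.\ with mean $g(\delta)$, so by Bernstein $|\|\bar\eta\|^2/\ell - g(\delta)| \lesssim \sqrt{d/\ell}$ w.h.p.\ (using $\ell \geq d$), and by Lemma \ref{lem:expecation} with $\delta'=0$ we have $0 \le 1 - g(\delta) \lesssim k\delta = n\delta/\ell$; combining, $|\|\bar\eta\|^2/\ell - 1| \lesssim \sqrt{d/\ell} + n\delta/\ell$, which contributes $(\sqrt{d/\ell}+n\delta/\ell)\|\theta\|^2$ to $\|E\|_{op}$ along the direction of $\theta$.

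The second step converts the operator-norm control of $E$ into control of $\ell(\tilde\theta(\ell),\theta)$. Here I would invoke the same deterministic lemma used for Theorem \ref{thm: known delta loss bound}: for $\hat\Sigma = \Sigma + (\text{perturbation})$ with $\Sigma$ having top eigenvalue $\|\theta\|^2 + 1/k$ and spectral gap of the same order, the estimated eigenvector obeys $\ell(v_{max}(\hat\Sigma), \theta/\|\theta\|) \lesssim \|E\|_{op}/(\|\theta\|^2 \vee \tfrac1k \cdot \text{(gap)})$ by Davis--Kahan, while $|\lambda_{max}(\hat\Sigma) - 1/k - \|\theta\|^2| \lesssim \|E\|_{op}$ by Weyl, so that after the $(\cdot)_+$ truncation and taking square roots one gets
\[
\ell(\tilde\theta(\ell),\theta) \;\lesssim\; \frac{\|E\|_{op}}{\|\theta\|} \;+\; \text{(eigenvector error)}\cdot\|\theta\|,
\]
and plugging in the bound on $\|E\|_{op}$ from Step 1 gives exactly the claimed expression: the rank-one bias yields $(\sqrt{d/\ell} + n\delta/\ell)\|\theta\|$, the cross term yields $\frac1{\|\theta\|}\cdot\frac{\|\theta\|}{\sqrt k}\sqrt{d/\ell} = \frac1{\sqrt k}\sqrt{d/\ell} \lesssim \sqrt{d/n}$ (using $\ell \le n$, $k = n/\ell$, so $\sqrt{d/(k\ell)} = \sqrt{d}/n \cdot \sqrt{n/\ell}\cdot\ldots$; more carefully $\frac1{\sqrt k}\sqrt{d/\ell} = \sqrt{d/(k\ell)} = \sqrt{d}/\sqrt{n}\cdot 1$ when combined with the $\sqrt{d/n}$ from the $w\bar\eta\theta^\T$ contribution directly along $\theta$), and the pure-noise term yields $\frac1{\|\theta\|}(\frac1{\sqrt k}\sqrt{d/n} + d/n)$ after using $1/(k\ell) = 1/n$ and $d/(k\ell)\le d/n$. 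The regime $\|\theta\|^2 \geq 1$ is handled exactly as in Theorem \ref{thm: known delta loss bound}: once the separation is of constant order the bias term $|\|\bar\eta\|^2/\ell - 1|\cdot\|\theta\|^2$ and all noise terms are dominated, and with $\ell = n$ ($k=1$, $\bar\eta_i = \eta_i$, $\|\bar\eta\|^2 = n$ deterministically, $g(\delta)=1$ so there is in fact \emph{no} bias) one recovers the parametric rate $\sqrt{d/n}+d/n$.

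The main obstacle is getting the rank-one bias term to come out as $n\delta/\ell$ rather than the cruder $\sqrt{n\delta/\ell}$ or $k\delta$: this requires simultaneously using the \emph{one-sided} deterministic bound $1 - g(\delta) \le 2k\delta/3$ from Lemma \ref{lem:expecation} (not a two-sided concentration bound) and the fact that this bias acts only along the rank-one direction $\theta/\|\theta\|$, so in the Davis--Kahan step it contributes to the eigenvalue error but \emph{not} to the eigenvector tilt (a rank-one perturbation aligned with the top eigenvector leaves the eigenvector unchanged). Keeping careful track of which perturbation terms are ``aligned'' with $\theta$ and which are ``transverse'' is what makes the bookkeeping delicate; once that separation is respected, all the pieces assemble into \eqref{eq:okok} after the elementary identities $k\ell = n$, $\ell \ge d$, $\ell \le n$.
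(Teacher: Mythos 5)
Your proposal follows essentially the same route as the paper: decompose $\hat\Sigma-\Sigma$ (with $\Sigma=\theta\theta^\T+\mathbf{I}_d/k$) into the rank-one bias $\theta\theta^\T(\|\bar\eta\|^2-\ell)/\ell$, split that bias into a concentration part of order $\sqrt{d/\ell}$ plus the deterministic part $|\mathbf{E}\|\bar\eta\|^2-\ell|/\ell\lesssim n\delta/\ell$ from Lemma \ref{lem:expecation}, control the cross and pure-noise terms exactly as in Theorem \ref{thm: known delta loss bound}, and convert to the loss via Weyl and Davis--Kahan, with $\ell=n$ handling the case $\|\theta\|^2\geq 1$. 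The only difference is your claim that one must exploit the alignment of the bias with $\theta$ to avoid eigenvector tilt; this refinement is unnecessary (and not used in the paper), since the crude bound $\ell(\tilde\theta,\theta)\lesssim\|\hat\Sigma-\Sigma\|_{op}/\|\theta\|$ already turns the bias $(\sqrt{d/\ell}+n\delta/\ell)\|\theta\|^2$ into the stated term $(\sqrt{d/\ell}+n\delta/\ell)\|\theta\|$.
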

As expected the additional cost of not knowing $g(\delta)$ is of order $n\delta/\ell$. Next we propose the optimal number of buckets $\ell=\ell^{**}$ for this procedure. It is given by 
\begin{equation}\label{ell **}
    \ell^{**}=d\vee\left\lceil \left(n\delta\vee\frac{\delta^{2/3}n^{4/3}\|\theta\|^{4/3}}{d^{1/3}}\vee n \|\theta\|^2\right) \right\rceil\wedge n.
\end{equation}
The corresponding estimation rates take different values according to several scenarios. 
\begin{prop}\label{prop: rates for theta_2} 
Assume that $d\leq n$. With probability greater than $1-c_1\cdot e^{-c_0d}$, the estimator $\tilde{\theta}(\ell^{**})$, where $\ell^{**}$ is defined in \eqref{ell **}, achieves the following: \begin{itemize}
    \item When $\delta\leq\sqrt{\frac{d}{n}}$:
\[
\ell^2(\tilde{\theta}(\ell^{**}),\theta)\lesssim
\begin{cases}
\sqrt{\frac{\delta d}{n}} + \frac{d}{n}, & \|\theta\|^2 \leq \sqrt{\frac{\delta d}{n}}\vee \frac{d}{n}\\
\left(\frac{\frac{\delta d}{n}}{\|\theta\|}\right)^{\frac{2}{3}}, & \sqrt{\frac{\delta d}{n}}\vee \frac{d}{n} \leq \|\theta\|^2 \leq \frac{\delta^2n}{d}\vee \frac{d}{n}\\
\frac{d}{n}, & \frac{\delta^2n}{d}\vee \frac{d}{n} \leq \|\theta\|^2. \\
\end{cases}
\label{eqn: rate without delta}
\]
\item When $\sqrt{\frac{d}{n}}\leq\delta$:
\[
\ell^2(\tilde{\theta}(\ell^{**}),\theta)\lesssim
\begin{cases}
\sqrt{\frac{\delta d}{n}} +\frac{d}{n}, & \|\theta\|^2 \leq \sqrt{\frac{\delta d}{n}} \\
\left(\frac{\frac{\delta d}{n}}{\|\theta\|}\right)^{\frac{2}{3}} , & \sqrt{\frac{\delta d}{n}} \leq \|\theta\|^2 \leq \frac{\sqrt{\frac{d}{n}}}{\delta}\\
\frac{d}{n}+\frac{\frac{d}{n}}{\|\theta\|^2}, & \frac{\sqrt{\frac{d}{n}}}{\delta} \leq \|\theta\|^2 .
\end{cases}
\]
\end{itemize}
Moreover, it remains true that $\ell^2(\tilde{\theta}(\ell^{**}),\theta)\lesssim\sqrt{\frac{\delta d}{n}} + \frac{d}{n}$, with probability greater than $1-c_1 e^{-c_0d}$.
\end{prop}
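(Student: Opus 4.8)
The plan is to substitute the explicit choice $\ell=\ell^{**}$ from \eqref{ell **} into the loss bound of Theorem~\ref{thm: unkown delta first loss bound} (and, in the single degenerate regime where $\ell^{**}=n$, into Theorem~\ref{thm: known delta loss bound}), thereby reducing the statement to a deterministic optimization followed by a case analysis. Writing $k=n/\ell^{**}$, so that $\sqrt{1/k}\,\sqrt{d/n}=\sqrt{\ell^{**}d}/n$, squaring the $\|\theta\|^2\le1$ branch of \eqref{eq:okok} and using $(a_1+\dots+a_5)^2\lesssim a_1^2+\dots+a_5^2$, the right-hand side is $\lesssim P_1+P_2+P_3+P_4+P_5$ with $P_1=\tfrac{d}{\ell}\|\theta\|^2$, $P_2=\tfrac{n^2\delta^2}{\ell^2}\|\theta\|^2$, $P_3=\tfrac{d}{n}$, $P_4=\tfrac{\ell d}{n^2\|\theta\|^2}$, $P_5=\tfrac{d^2}{n^2\|\theta\|^2}$. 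Here $P_1,P_2$ are non-increasing in $\ell$ while $P_4$ is non-decreasing, and the three quantities inside $\ell^{**}$ are exactly the relevant critical values of $\ell$: $n\delta$ is the mixing-time floor keeping $P_2$ small, $\delta^{2/3}n^{4/3}\|\theta\|^{4/3}/d^{1/3}$ is the $P_2$--$P_4$ balance point (common value $(\tfrac{\delta d}{n\|\theta\|})^{2/3}$), and $n\|\theta\|^2$ is the $P_1$--$P_4$ balance point (common value $d/n$). Since $\ell^{**}$ lies above each of these (modulo the clipping at $d$ and $n$, and the clip at $n$ never cuts below $n\|\theta\|^2$ when $\|\theta\|^2\le1$), one gets $P_1\lesssim d/n$ and $P_2\lesssim\|\theta\|^2\wedge(\tfrac{\delta d}{n\|\theta\|})^{2/3}$; since moreover $\ell^{**}\le\big(n\delta+\delta^{2/3}n^{4/3}\|\theta\|^{4/3}/d^{1/3}+n\|\theta\|^2\big)\wedge n$, one gets $P_4\lesssim\big(\tfrac{d^2}{n^2\|\theta\|^2}+\tfrac{\delta d}{n\|\theta\|^2}+(\tfrac{\delta d}{n\|\theta\|})^{2/3}+\tfrac dn\big)\wedge\tfrac{d}{n\|\theta\|^2}$; finally $P_3=d/n$ and $P_5\le d/n$ wherever this decomposition is invoked (there $\|\theta\|^2\gtrsim d/n$), both below the target rate.

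The rest is bookkeeping over the six regions, driven by the elementary identities $n\delta=\delta^{2/3}n^{4/3}\|\theta\|^{4/3}/d^{1/3}\iff\|\theta\|^2=\sqrt{\delta d/n}$, $\ \delta^{2/3}n^{4/3}\|\theta\|^{4/3}/d^{1/3}=n\|\theta\|^2\iff\|\theta\|^2=\delta^2 n/d$, $\ \delta^{2/3}n^{4/3}\|\theta\|^{4/3}/d^{1/3}=n\iff\|\theta\|^2=\sqrt{d/n}/\delta$, together with the comparisons of $\delta$ with $d/n$ and with $\sqrt{d/n}$. In region (b) of each case one plugs $\ell^{**}\sim\delta^{2/3}n^{4/3}\|\theta\|^{4/3}/d^{1/3}$ into the five terms and checks $P_1,P_3,P_5\lesssim P_4=(\tfrac{\delta d}{n\|\theta\|})^{2/3}$, each such inequality reducing to one of the region-defining inequalities. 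In region (c) of the case $\delta\le\sqrt{d/n}$: for $\|\theta\|^2<1$ one has $\ell^{**}\sim n\|\theta\|^2$, whence $P_1,P_4\sim d/n$ and $\delta^2/\|\theta\|^2,(d/n)^2/\|\theta\|^2\lesssim d/n$, giving rate $d/n$, while for $\|\theta\|^2\ge1$ one has $\ell^{**}=n$, $k=1$ and Theorem~\ref{thm: known delta loss bound} gives $d/n$ directly. In region (c) of the case $\delta\ge\sqrt{d/n}$ one has $\delta^{2/3}n^{4/3}\|\theta\|^{4/3}/d^{1/3}\ge n$, hence $\ell^{**}=n$ and $k=1$; then $\bar\eta_i=\eta_i$, $\mathbf{E}\|\bar\eta\|^2/\ell=g(\delta)=1$, so $\tilde\theta(n)=\hat\theta(n)$, and Theorem~\ref{thm: known delta loss bound} with $\ell=n$ yields $\ell^2\lesssim d/n+(d/n)/\|\theta\|^2$.

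Region (a) ($\|\theta\|^2\le\sqrt{\delta d/n}\vee d/n$) requires a softer argument, since the $1/\|\theta\|$-type terms of \eqref{eq:okok} blow up as $\|\theta\|\to0$. There I would instead use $\ell^2(\tilde\theta(\ell^{**}),\theta)\le2\|\tilde\theta(\ell^{**})\|^2+2\|\theta\|^2$ together with the high-probability operator-norm control of $\hat\Sigma$ that already underlies Theorem~\ref{thm: unkown delta first loss bound}, giving $\|\tilde\theta(\ell^{**})\|^2=(\lambda_{max}(\hat\Sigma)-1/k)_+\lesssim\|\theta\|^2+\|\theta\|\sqrt{d/n}+\sqrt{\ell^{**}d}/n+d/n$; since $\ell^{**}\in\{d,\lceil n\delta\rceil\}$ on region (a) and $\|\theta\|^2\le\sqrt{\delta d/n}\vee d/n$, each summand is $\lesssim\sqrt{\delta d/n}+d/n$. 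The global bound $\sqrt{\delta d/n}+d/n$ then follows because $(\tfrac{\delta d}{n\|\theta\|})^{2/3}$ attains its maximum over its regime at $\|\theta\|^2=\sqrt{\delta d/n}$, where it equals $\sqrt{\delta d/n}$, and $(d/n)/\|\theta\|^2\le\delta\sqrt{d/n}\le\sqrt{\delta d/n}$ throughout region (c) of the second case.

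I expect the main obstacle to be this case analysis, and within it the two spots where \eqref{eq:okok} has to be set aside. At $\ell^{**}=n$, i.e.\ $k=1$, the misspecification term $(n\delta/\ell)\|\theta\|$ in \eqref{eq:okok} is a spurious artifact of having dropped $g(\delta)$ — retaining it would add a term of order $\delta^2\|\theta\|^2$ and destroy region (c) of the second case — so one must recognize $\tilde\theta(n)=\hat\theta(n)$ and fall back on Theorem~\ref{thm: known delta loss bound}; and in the low-signal part of region (a) one must pass from \eqref{eq:okok} to the trivial $\|\tilde\theta\|^2+\|\theta\|^2$ bound. Everything else is routine monotonicity in $\ell$ together with comparison of monomials in $d,n,\delta,\|\theta\|$ against the region thresholds.
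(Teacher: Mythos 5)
Your proposal is correct and follows essentially the same route as the paper: substitute $\ell^{**}$ into the bound of Theorem \ref{thm: unkown delta first loss bound} region by region, fall back on the trivial bound $\ell^2(\tilde\theta,\theta)\lesssim\|\tilde\theta\|^2+\|\theta\|^2$ with the operator-norm control when $\|\theta\|^2\le\sqrt{\delta d/n}\vee d/n$, and treat $\ell^{**}=n$ via the vanilla spectral bound of Theorem \ref{thm: known delta loss bound}. Your explicit observation that at $k=1$ one has $\tilde\theta(n)=\hat\theta(n)$ (so the $(n\delta/\ell)\|\theta\|$ term is spurious) is exactly the step the paper uses implicitly but states tersely, and your organization via balance points of the five terms is only a presentational variant of the paper's case-by-case computation.
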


 \begin{figure}[!ht]
    \begin{subfigure}[t]{.5\textwidth}
     \begin{tikzpicture}

\draw[->] (0,0) -- (5,0) node[anchor=north] {$\|\theta\|^2$};
\draw	(0,0) node[anchor=north] {0}
		(1,0) node[anchor=north] {$\sqrt{\frac{\delta d}{n}}$}
            (2,-0.15) node[anchor=north] {$\delta$}
            (4,-0.15) node[anchor=north] {$1$}
            (2.5,0) node[anchor=north] {$\sqrt{\frac{d}{n}}$}
            (2,4.1) node[anchor=north] {$\left(\frac{\frac{\delta d}{n}}{\|\theta\|}\right)^{\frac{2}{3}}$}
            (1.1,2) node[anchor=north]{$\frac{\delta d}{n\|\theta\|^2}$}
            (3.2,5) node[anchor=north] {$\frac{ d}{n\|\theta\|^2}$}
		(3.5,0) node[anchor=north] {$\frac{\delta^2 n}{d}$};
\draw	(-0.5, 4) node{ $\sqrt{\frac{\delta d}{n}}$}
(-0.5, 6) node{ $\sqrt{\frac{ d}{n}}$}
		(-0.2, 1) node{ $\frac{d}{n}$ };

\draw[->] (0,0) -- (0,7); 
\draw[dotted] (1,0) -- (1,4);
\draw[dotted] (3.5,0) -- (3.5,1);
\draw[dotted] (2,0) -- (2,1);
\draw[dotted] (2.5,0) -- (2.5,6);
\draw[dotted] (0,1) -- (3,1);
\draw[dotted] (4,0) -- (4,1);

\draw[thick,dashdotted] (0,4) -- (1,4);
\draw[thick] (0,6) -- (2.5,6);
\draw[thick,dashed](2,1) -- (3,1);
\draw[thick,dashdotted] (3,1) -- (5,1);
\draw[thick] (4,1) -- (5,1);

\draw[thick,dotted]  (1,4) parabola[bend at end] (3.5,1); 
\draw[thick,dashed] (1,4) parabola[bend at end] (2,1); 
\draw[thick,solid]  (2.5,6) parabola[bend at end] (4,1); 


\end{tikzpicture}
        \caption{Case $\delta \leq \sqrt{d/n}$}
    \end{subfigure}%
    \begin{subfigure}[t]{.5\textwidth}
 \begin{tikzpicture}

\draw[->] (0,0) -- (5,0) node[anchor=north] {$\|\theta\|^2$};
\draw	(0,0) node[anchor=north] {0}
		(0.8,0) node[anchor=north] {$\sqrt{\frac{\delta d}{n}}$}
  (1.5,0) node[anchor=north] {$\sqrt{\frac{d}{n}}$}
            (2.25,-0.15) node[anchor=north] {$\delta$}
		(3,0) node[anchor=north] {$\frac{\sqrt{d/n}}{\delta}$}
            (2,4) node[anchor=north] {$\left(\frac{\frac{\delta d}{n}}{\|\theta\|}\right)^{\frac{2}{3}}$}
            (3.2,3) node[anchor=north] {$\frac{d}{n\|\theta\|^2}$}
            (1.2,2) node[anchor=north]{$\frac{\delta d}{n \|\theta\|^2}$}
            (4,-0.15) node[anchor=north] {$1$};
\draw	(-0.5, 4) node{ $\sqrt{\frac{\delta d}{n}}$}
(-0.5, 6) node{ $\sqrt{\frac{ d}{n}}$}
		(-0.2, 1) node{ $\frac{d}{n}$ };

\draw[->] (0,0) -- (0,7);
\draw[dotted] (1,0) -- (1,4);
\draw[dotted] (2.9,0) -- (2.9,2);
\draw[dotted] (2.25,0) -- (2.25,1);
\draw[dotted] (4,0) -- (4,1);
\draw[dotted] (0,1) -- (3,1);
\draw[dotted] (1.5,0) -- (1.5,6);

\draw[thick,dashdotted] (0,4) -- (1,4);
\draw[thick] (0,6) -- (1.5,6);
\draw[thick,dashed](2.25,1) -- (4,1);
\draw[thick] (4,1) -- (5,1);

\draw[thick,dotted]  (1,4) parabola[bend at end] (2.9,2); 
\draw[thick,solid]  (1.5,6) parabola[bend at end] (4,1); 
\draw[thick,dashed] (1,4) parabola[bend at end] (2.25,1); 


\matrix [draw,below left] at (current bounding box.north east) {
  \node [dashed,label=right:$\hat{\theta}(\ell^*)$] {- -}; \\
  \node [solid,label=right:$\tilde{\theta}(\ell^{**})$] {....}; \\
    \node [solid,label=right:$\hat{\theta}(n)$] {---}; \\
};
\end{tikzpicture}
        \caption{Case $\delta \geq \sqrt{d/n}$}
    \end{subfigure}%
 \caption{Upper bounds of the risk of estimators $\hat\theta(\ell^{*})$ (Proposition \ref{prop: theoretical rates}), $\tilde\theta(\ell^{**})$ (Proposition \ref{prop: rates for theta_2}) and the vanilla spectral method $\hat{\theta}(n)$.}
  \label{fig:tikz}
\end{figure}
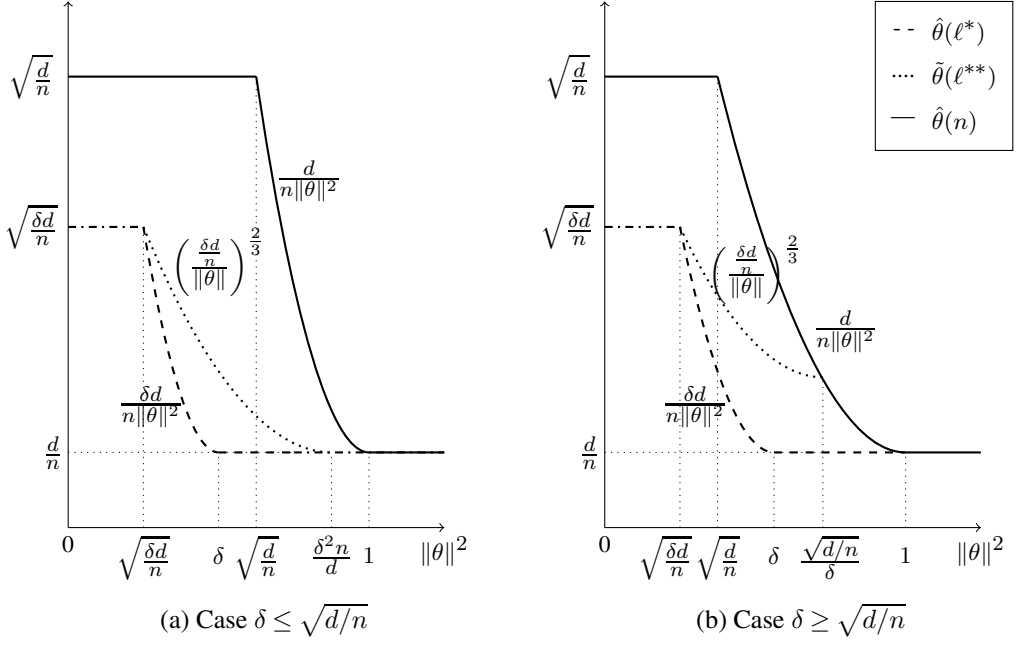
The  rates of Proposition \ref{prop: rates for theta_2} are displayed in Figure \ref{fig:tikz}. Based on our proof we can write the estimation error in Proposition \ref{prop: rates for theta_2} in a more compact form given by
$$
\ell^2(\tilde{\theta}(\ell^{**}),\theta) \lesssim \left(\sqrt{\frac{\delta d }{n}} + \frac{d}{n}\right) \wedge\left(\frac{\ell^{**} d}{n\|\theta\|^2} + \frac{d}{n}\right).$$
As a result $\tilde{\theta}(\ell^{**})$ is not locally minimax optimal but remains globally minimax optimal. Observe that it is always better than the vanilla spectral algorithm (without bucketing) even locally. Not knowing $g(\delta)$ leads to sub-optimal rates locally which suggests that full adaptation might not be possible for locally minimax optimal procedures. That being said, if the goal is to achieve global minimax optimality then there is still hope.
 With this in mind the next technical lemma sets a basis for constructing an adaptive algorithm without the knowledge of optimal $\ell^{**}$. This is done by proposing a surrogate upper bound that does not depend on $\delta$.
\begin{lem}
    For any number of buckets $\ell\geq\ell^{**}$ it holds that
    \begin{equation}\label{eq:surroagte}
        \ell^2(\tilde{\theta}(\ell),\theta)\lesssim\frac{\sqrt{d\ell}}{n}\wedge\left(\frac{\ell d}{n^2\|\theta \|^2 } + \frac{d}{n}\right) ,
    \end{equation}
    with probability  $1-c_1\cdot e^{-c_0d}$ .
    \label{lem: property for large ell}
\end{lem}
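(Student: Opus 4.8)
The plan is to obtain \eqref{eq:surroagte} from Theorem~\ref{thm: unkown delta first loss bound} whenever the signal is not too weak, and from a direct thresholding estimate when it is. Write $R:=\sqrt{d\ell}/n$ for the first term in \eqref{eq:surroagte}; since $d\le\ell\le n$ we have $R\le\sqrt{d/n}\le1$ and $d/n\le R$. If $\|\theta\|^2\ge1$ then $n\|\theta\|^2\ge n$, so \eqref{ell **} gives $\ell^{**}=n$, hence $\ell=n$, and Theorem~\ref{thm: unkown delta first loss bound} yields $\ell^2(\tilde\theta(\ell),\theta)\lesssim d/n$, which is $\le R$ and $\le \ell d/(n^2\|\theta\|^2)+d/n$, so \eqref{eq:surroagte} holds. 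Assume henceforth $\|\theta\|^2\le1$. From $\ell\ge\ell^{**}$ and \eqref{ell **} we record the lower bounds $\ell\ge d$, $\ell\ge n\delta$, $\ell\ge n\|\theta\|^2$, and — provided $\delta^{2/3}n^{4/3}\|\theta\|^{4/3}d^{-1/3}\le n$ — also $\ell\ge\delta^{2/3}n^{4/3}\|\theta\|^{4/3}d^{-1/3}$; in the remaining subcase $\ell^{**}=n$, so $\ell=n$ and $k=1$, which I treat when it arises. I split on whether $\|\theta\|^2\le R$ or $R<\|\theta\|^2\le1$.

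\emph{Weak-signal case $\|\theta\|^2\le R$.} Here I would exploit the positive-part truncation defining $\tilde\theta(\ell)$. Recall the decomposition $\hat\Sigma=\theta\theta^\top+\tfrac1k\mathbf{I}_d+\theta\theta^\top\big(\tfrac{\|\bar\eta\|^2}{\ell}-1\big)+\tfrac{(w\bar\eta)\theta^\top+\theta(w\bar\eta)^\top}{\sqrt k\,\ell}+\big(\tfrac{ww^\top}{k\ell}-\tfrac1k\mathbf{I}_d\big)$, and the operator-norm bound underlying the proof of Theorem~\ref{thm: unkown delta first loss bound}: with probability $1-c_1e^{-c_0d}$ (using $d\le\ell$, Lemma~\ref{lem:expecation} with $\delta'=0$, Hoeffding for the i.i.d.\ variables $\bar\eta_i^2\in[0,1]$, and standard sub-Gaussian matrix/vector concentration),
$\|\hat\Sigma-\theta\theta^\top-\tfrac1k\mathbf{I}_d\|_{op}\lesssim\|\theta\|^2\big(\sqrt{d/\ell}+n\delta/\ell\big)+\sqrt{d/n}\,\|\theta\|+\sqrt{d\ell}/n+d/n$.
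By Weyl's inequality $\|\tilde\theta(\ell)\|^2=(\lambda_{max}(\hat\Sigma)-1/k)_+\le\|\theta\|^2+\|\hat\Sigma-\theta\theta^\top-\tfrac1k\mathbf{I}_d\|_{op}$. Since $\ell\ge d\vee n\delta$ the multiplicative noise $\sqrt{d/\ell}+n\delta/\ell$ is $O(1)$, and $\sqrt{d/n}\,\|\theta\|\le\tfrac12(d/n+\|\theta\|^2)$; plugging $\|\theta\|^2\le R$ gives $\|\tilde\theta(\ell)\|^2\lesssim R$, hence $\ell^2(\tilde\theta(\ell),\theta)\le(\|\tilde\theta(\ell)\|+\|\theta\|)^2\lesssim R$. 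Finally $\|\theta\|^2\lesssim R$ forces $\ell d/(n^2\|\theta\|^2)\gtrsim\ell d/(n^2R)=R$, so $R$ is, up to a constant, the minimum in \eqref{eq:surroagte}, which settles this case.

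\emph{Moderate-signal case $R<\|\theta\|^2\le1$.} Now Theorem~\ref{thm: unkown delta first loss bound} applies; with $A:=\sqrt{d/\ell}\,\|\theta\|$, $B:=(n\delta/\ell)\|\theta\|$, $C:=\sqrt{d/n}$, $D:=\sqrt{d\ell}/(n\|\theta\|)$ and $E:=d/(n\|\theta\|)$ (so $\sqrt{1/k}\sqrt{d/n}=\sqrt{d\ell}/n$) it gives $\ell^2(\tilde\theta(\ell),\theta)\lesssim A^2+B^2+C^2+D^2+E^2$. I bound each summand by $R\wedge\big(\ell d/(n^2\|\theta\|^2)+d/n\big)$. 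We have $C^2=d/n\le R$ and $\le\ell d/(n^2\|\theta\|^2)+d/n$; using $\ell\ge n\|\theta\|^2$, $A^2=d\|\theta\|^2/\ell\le d/n$; using $d\le\ell$, $E^2=(d/\ell)D^2\le D^2$. The term $D^2=\ell d/(n^2\|\theta\|^2)$ is the first summand on the right-hand side, and $D^2\le R$ is equivalent to $\sqrt{d\ell}\le n\|\theta\|^2$, i.e.\ $R\le\|\theta\|^2$, which is the case hypothesis. For $B^2$: if $\ell\ge\delta^{2/3}n^{4/3}\|\theta\|^{4/3}d^{-1/3}$, cubing gives $\ell^3\ge\delta^2n^4\|\theta\|^4/d$, so $B^2/D^2=\delta^2n^4\|\theta\|^4/(d\ell^3)\le1$, i.e.\ $B^2\le D^2$; otherwise $\ell^{**}=n$, so $\ell=n$, $k=1$, $\bar\eta=\eta$, $\|\bar\eta\|^2=\ell$, the misspecification term in $\hat\Sigma$ vanishes, and $B$ may be replaced by $0$. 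In all cases $\ell^2(\tilde\theta(\ell),\theta)\lesssim d/n+D^2\le R\wedge\big(\ell d/(n^2\|\theta\|^2)+d/n\big)$, which is \eqref{eq:surroagte}.

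I expect the weak-signal estimate to be the delicate step: one must verify that the operator-norm error of $\hat\Sigma$ — already controlled inside the proof of Theorem~\ref{thm: unkown delta first loss bound} — stays below $R$ once $\|\theta\|^2\le R$, and this is exactly where $\ell\ge d$ and $\ell\ge n\delta$ enter (the former to keep the multiplicative noise $O(1)$, the latter to ensure $\delta\sqrt{d/\ell}\lesssim R$). The moderate-signal case is then bookkeeping: every term from Theorem~\ref{thm: unkown delta first loss bound} is dominated by $d/n$ or by $D^2=\ell d/(n^2\|\theta\|^2)$, and the peculiar exponent $\delta^{2/3}n^{4/3}\|\theta\|^{4/3}d^{-1/3}$ in \eqref{ell **} is precisely what makes $B^2\le D^2$. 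The one wrinkle, flagged above, is the boundary $\ell=n$ (forced whenever $\ell^{**}=n$), where $k=1$ makes $\bar\eta=\eta$ and kills the misspecification term, which is what rescues the argument when the cubic bound on $\ell$ is unavailable.
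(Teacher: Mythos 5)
Your proof is correct and follows essentially the same route as the paper's: the same case split at $\|\theta\|^2 \lessgtr \sqrt{d\ell}/n$, with the weak-signal regime handled by bounding $\|\tilde\theta(\ell)\|^2$ through the operator-norm concentration of $\hat\Sigma$, and the moderate regime handled by plugging the three lower bounds implied by $\ell\geq\ell^{**}$ into Theorem \ref{thm: unkown delta first loss bound}. If anything, your explicit treatment of the boundary subcase $\ell^{**}=n$ with $\|\theta\|^2\leq 1$ (where the cubic inequality fails but $k=1$ kills the misspecification term) is slightly more careful than the paper's appeal to \eqref{in: l has three bounds}.
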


While choosing $\ell=\ell^{**}$ results in a small estimation error (even locally), we get higher global rates for $\ell\geq \ell^{**}$. Indeed $\sqrt{\frac{\delta d}{n}}\lesssim\frac{\sqrt{d\ell}}{n}$, under the assumption $\ell\geq \ell^{**}$. This observation shall help rule out values of $\ell$ for which the estimation rate is too large. In what follows, we focus on the case $\|\theta\|^2\leq 1$ and propose an adaptive procedure which is inspired from the well-known Lepski's method, widely used for adaptation. The other case $\|\theta\|^2\geq 1$ is easy since we can simply use the spectral method in that case $(\ell = n)$ and get that 
$$
\ell^2(\tilde{\theta}(\tilde{\ell}),\theta) \lesssim \frac{d}{n},
$$
with probability greater than $1-c_1\cdot e^{-c_0d}$. The upper bound \eqref{eq:surroagte} will be useful for adaptation since it does not depend on $\delta$ but it still depends on $\|\theta\|$. We proceed in two steps. First construct a fully adaptive estimator that is globally minimax optimal. Then,  plug-in the norm of the latter estimator replacing $\|\theta\|$ in \eqref{eq:surroagte} in order to get better rates locally. 

Let us define a grid-line for values of $\ell$ in the set $\{d,d+1,\ldots,n\}$. It is given by  
$$G\dfn\{g_i=2^i d \mid i=0,1,\ldots,M\},$$ where $M$ is the greatest possible integer that $2^M d\leq n$. We define $\tilde{m}$ and $\tilde{\ell}$ in the following way:
\begin{equation*}
    \tilde{m}\dfn \min\biggl \{m\in \{0,1,\ldots,M\}: \;\; \ell^2(\tilde{\theta}(g_k),\tilde{\theta}(g_{k-1}))\leq 4 C^2 \left(\frac{\sqrt{d g_k}}{n}\wedge\frac{d g_k}{n^2\|\tilde\theta(g_k) \|^2 }\right) \;\;\; \text{for all}\; k\geq m+1\biggl\},
\end{equation*}
for some $C$ large enough. We then consider the adaptive number of buckets
\begin{equation}\label{tilde ell}
    \tilde{\ell}\dfn g_{\tilde{m}} = 2^{\tilde{m}}d.
\end{equation}
We are now ready to state the first adaptation result of this section.
\begin{thm}{\label{thm: lepski}}
    Let $\tilde{\ell}$ defined in \eqref{tilde ell}, $\|\theta\|^2 \leq 1$ and $d \leq n$. Then, the fully adaptive estimator $\tilde{\theta}(\tilde{\ell})$  achieves the following rate
$$
\ell^2(\tilde{\theta}(\tilde{\ell}),\theta) \lesssim \left(\sqrt{\frac{\delta d }{n}} + \frac{d}{n}\right),
$$
    with probability $1-c_1\log(n/d)\cdot e^{-c_0 d}$. 
\end{thm}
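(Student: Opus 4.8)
\emph{Setup and strategy.} The plan is to run the classical Lepski argument relative to the oracle number of buckets $\ell^{**}$ of \eqref{ell **}. First I would introduce the oracle grid index $m^{*}:=\min\{m\in\{0,\dots,M\}:g_{m}\ge\ell^{**}\}$ (and $m^{*}=M$ if no such index exists); then $g_{m^{*}}$ is of the same order as $\ell^{**}$, more precisely $g_{m^{*}}\in[\tfrac12\ell^{**},2\ell^{**}]$ up to the cap at $n$. Everything is argued on the event $\mathcal E$ on which the conclusion of Lemma~\ref{lem: property for large ell} holds for every grid point $g_{i}\ge\ell^{**}$ and the conclusion of Theorem~\ref{thm: unkown delta first loss bound} holds for every grid point; since the grid has at most $\log_{2}(n/d)+1$ points, a union bound gives $\mathbf{P}(\mathcal E^{c})\le c_{1}\log(n/d)\,e^{-c_{0}d}$, which is the probability in the statement.

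\emph{The selected index does not exceed the oracle one.} On $\mathcal E$ I would check that the inequality defining $\tilde m$ already holds at $m=m^{*}$, so that $\tilde m\le m^{*}$. For $k\ge m^{*}+1$ both $g_{k},g_{k-1}$ exceed $\ell^{**}$, so Lemma~\ref{lem: property for large ell} and monotonicity in $\ell$ bound $\ell^{2}(\tilde\theta(g_{k}),\theta)$ and $\ell^{2}(\tilde\theta(g_{k-1}),\theta)$ by $r^{\theta}_{k}:=\tfrac{\sqrt{dg_{k}}}{n}\wedge(\tfrac{dg_{k}}{n^{2}\|\theta\|^{2}}+\tfrac dn)$, and the (easily verified) triangle inequality for $\ell(\cdot,\cdot)$ gives $\ell^{2}(\tilde\theta(g_{k}),\tilde\theta(g_{k-1}))\lesssim r^{\theta}_{k}$. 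The only real point is to replace $\|\theta\|$ by $\|\tilde\theta(g_{k})\|$: since $\ell(\tilde\theta(g_{k}),\theta)\lesssim(\sqrt{dg_{k}}/n)^{1/2}$ we have $\|\tilde\theta(g_{k})\|\lesssim\|\theta\|\vee(dg_{k})^{1/4}n^{-1/2}$, and a short case analysis (according to whether $\|\theta\|^{2}$ is above or below $\sqrt{dg_{k}}/n$, using $g_{k}\ge\ell^{**}\ge n\|\theta\|^{2}$ to dominate the $d/n$ term) yields $r^{\theta}_{k}\lesssim\tfrac{\sqrt{dg_{k}}}{n}\wedge\tfrac{dg_{k}}{n^{2}\|\tilde\theta(g_{k})\|^{2}}$; taking $C$ large enough makes the criterion hold at $m^{*}$.

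\emph{The risk bound.} On $\mathcal E$ we have $\tilde\ell=g_{\tilde m}\le g_{m^{*}}$, hence $\ell(\tilde\theta(\tilde\ell),\theta)\le\ell(\tilde\theta(g_{\tilde m}),\tilde\theta(g_{m^{*}}))+\ell(\tilde\theta(g_{m^{*}}),\theta)$. The second term squared is $\lesssim\sqrt{\delta d/n}+d/n$ because $g_{m^{*}}\asymp\ell^{**}$ and Proposition~\ref{prop: rates for theta_2} applies at $\ell^{**}$ (in the degenerate case $\ell^{**}>g_{M}$ one has $g_{M}\asymp n$ and concludes directly from Theorem~\ref{thm: unkown delta first loss bound} using $\|\theta\|^{2}\le1$). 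For the first term I would telescope along the grid and use the defining inequality of $\tilde m$ at each $k\ge\tilde m+1$, obtaining $\ell(\tilde\theta(g_{\tilde m}),\tilde\theta(g_{m^{*}}))\le 2C\sum_{k=\tilde m+1}^{m^{*}}(\tfrac{\sqrt{dg_{k}}}{n}\wedge\tfrac{dg_{k}}{n^{2}\|\tilde\theta(g_{k})\|^{2}})^{1/2}$, and then bound this geometric-type sum by distinguishing two regimes with a large absolute constant $A$. If $\|\theta\|^{2}\le A(\sqrt{\delta d/n}\vee\tfrac dn)$, inspection of \eqref{ell **} gives $\ell^{**}\lesssim d\vee n\delta$, so every $g_{k}$ in the sum is $\lesssim d\vee n\delta$ and bounding each summand by $(\sqrt{dg_{k}}/n)^{1/2}$ yields first term squared $\lesssim\tfrac dn\vee\sqrt{\tfrac{\delta d}{n}}$. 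If $\|\theta\|^{2}>A(\sqrt{\delta d/n}\vee\tfrac dn)$, I would split the sum at $j^{*}:=\max\{j:g_{j}\le A(d\vee n\delta)\}$: for $k\le j^{*}$ the first-branch bound again gives a geometric sum squared $\lesssim\tfrac dn\vee\sqrt{\tfrac{\delta d}{n}}$; for $k\in(j^{*},m^{*}]$ one has $g_{k}>A(d\vee n\delta)$ and $g_{k}\le g_{m^{*}}\lesssim\ell^{**}\lesssim A^{-1}n^{2}\|\theta\|^{4}/d$, so applying Theorem~\ref{thm: unkown delta first loss bound} at $\ell=g_{k}$ shows each of its five error terms is $\le\tfrac12\|\theta\|$, hence $\|\tilde\theta(g_{k})\|\ge\tfrac12\|\theta\|$, and the second branch bounds each summand by $\lesssim\tfrac{\sqrt{dg_{k}}}{n\|\theta\|}$, so the geometric sum squared is $\lesssim\tfrac{d\ell^{**}}{n^{2}\|\theta\|^{2}}\lesssim\sqrt{\delta d/n}+d/n$ (again by \eqref{ell **}, equivalently the compact form of Proposition~\ref{prop: rates for theta_2}). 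Combining the two terms gives $\ell^{2}(\tilde\theta(\tilde\ell),\theta)\lesssim\sqrt{\delta d/n}+d/n$ on $\mathcal E$.

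\emph{Main obstacle.} I expect the genuinely delicate step to be the control of the telescoping remainder. The naive estimate $\ell^{2}(\tilde\theta(g_{\tilde m}),\tilde\theta(g_{m^{*}}))\lesssim\sqrt{dg_{m^{*}}}/n$ is too crude: it can be polynomially larger than $\sqrt{\delta d/n}+d/n$ once $\|\theta\|$ is moderately large, so one is forced to exploit the $\wedge\,dg_{k}/(n^{2}\|\tilde\theta(g_{k})\|^{2})$ branch of the penalty, and this makes the localization estimate $\|\tilde\theta(g_{k})\|\asymp\|\theta\|$ on the intermediate scales $d\vee n\delta\lesssim g_{k}\lesssim\ell^{**}$ unavoidable. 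That is precisely where Theorem~\ref{thm: unkown delta first loss bound} (valid for \emph{all} $\ell$) and the hypothesis $\|\theta\|^{2}\le 1$ enter, and it is also why the argument has to split on whether $\|\theta\|^{2}$ lies above or below a large constant multiple of $\sqrt{\delta d/n}\vee d/n$, the latter being chosen large enough that the constants in the localization line up.
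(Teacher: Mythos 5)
Your proposal is correct and follows essentially the same route as the paper's proof: the same grid and oracle index $m_*$, the same use of Lemma~\ref{lem: property for large ell} (together with an upper bound on $\|\tilde\theta(g_k)\|$) to show the selection criterion cannot fire above $\ell^{**}$, the same telescoping of the penalty terms as a geometric sum with a case split on $\|\theta\|$ (first branch $\sqrt{dg_k}/n$ for small signal, localization $\|\tilde\theta(g_k)\|\gtrsim\|\theta\|$ via Theorem~\ref{thm: unkown delta first loss bound} and the second branch giving $d\ell^{**}/(n^2\|\theta\|^2)$ otherwise), and Proposition~\ref{prop: rates for theta_2} at the oracle scale. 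The only differences are organizational (a deterministic good event obtained by a union bound over the grid instead of the paper's $I_1/I_2$ split, and a split of the sum at $A(d\vee n\delta)$ rather than at $n\delta$), which do not change the argument.
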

Theorem \ref{thm: lepski} leads to a fully adaptive estimator that is globally minimax optimal. While it is most likely impossible to achieve local minimax optimal rates adaptively  we can still hope to achieve better rates locally. This is done in a more refined construction that we describe as follows. We define $\tilde{m}_2$ and $\tilde{\ell}_2$ in the following way:
\begin{equation*}
    \tilde{m}_2\dfn \min\biggl \{m\in \{0,1,\ldots,M\}: \;\; \ell^2(\tilde{\theta}(g_k),\tilde{\theta}(g_{k-1}))\leq 4 C^2 \left(\frac{\sqrt{d g_k}}{n}\wedge\frac{d g_k}{n^2\|\tilde\theta(\tilde{\ell}) \|^2 }\right) \;\;\; \text{for all}\; k\geq m+1\biggl\},
\end{equation*}
for some $C$ large enough. We then consider the adaptive number of buckets
\begin{equation}\label{tilde ell 2}
    \tilde{\ell}_2\dfn g_{\tilde{m}_2} = 2^{\tilde{m}_2}d.
\end{equation}
Observe that, in the definition of $\tilde{m}_2$, the norm of $\|\theta\|$ is replaced by $\|\tilde{\theta}(\tilde{\ell})\|$ and not by $\|\tilde{\theta}(g_k)\|$ as we did in the definition of $\tilde{m}$. We are now ready to state the main result of this section.

\begin{thm}{\label{thm: lepski2}}
    Let $\tilde{\ell}_2$ defined in \eqref{tilde ell 2}, $\|\theta\|^2 \leq 1$ and $d \leq n$. Then, the fully adaptive estimator $\tilde{\theta}(\tilde{\ell}_2)$ achieves the following rate
$$
\ell^2(\tilde{\theta}(\tilde{\ell}_2),\theta) \lesssim \left(\sqrt{\frac{\delta d }{n}} + \frac{d}{n}\right)\wedge\left(\frac{\ell^{**} d}{n^2\|\theta\|^2} + \frac{d}{n}\right),
$$
    with probability $1-c_1\log(n/d)\cdot e^{-c_0 d}$. In particular, we also have, globally,  that 
    $$
    \ell^2(\tilde{\theta}(\tilde{\ell}_2),\theta) \lesssim \sqrt{\frac{\delta d}{n}} + \frac{d}{n},
    $$
     with probability $1-c_1\log(n/d)\cdot e^{-c_0 d}$.
\end{thm}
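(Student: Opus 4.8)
The plan is a Lepski-type argument on the dyadic grid $G=\{g_i=2^id\}_{i=0}^M$, comparing the data-driven index $\tilde m_2$ with the oracle index $m^{\ast}:=\min\{i: g_i\geq\ell^{\ast\ast}\}$ (and $m^{\ast}:=M$ if no such $i$ exists), which satisfies $g_{m^{\ast}}\sim\ell^{\ast\ast}$ since consecutive grid points differ by a factor $2$ and $\ell^{\ast\ast}\leq n<2^{M+1}d$. Two structural facts will be used throughout: $\ell(\cdot,\cdot)$ is a pseudometric, so $\ell^2(a,c)\leq 2\ell^2(a,b)+2\ell^2(b,c)$ and $\bigl|\|a\|-\|b\|\bigr|\leq\ell(a,b)$; and, by Theorem~\ref{thm: lepski}, $\ell^2(\tilde\theta(\tilde\ell),\theta)\lesssim\sqrt{\delta d/n}+d/n$ on a high-probability event, so that $\|\tilde\theta(\tilde\ell)\|^2\lesssim\|\theta\|^2+\sqrt{\delta d/n}+d/n$ and, whenever $\|\theta\|^2\geq C_0(\sqrt{\delta d/n}+d/n)$ for a suitable constant $C_0$, one has $\tfrac12\|\theta\|\leq\|\tilde\theta(\tilde\ell)\|\leq\tfrac32\|\theta\|$. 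I define the good event $\mathcal E$ as the intersection of the deviation events of Theorem~\ref{thm: unkown delta first loss bound} for every $\tilde\theta(g_i)$, of Lemma~\ref{lem: property for large ell} for every $\tilde\theta(g_i)$ with $g_i\geq\ell^{\ast\ast}$, and of Theorem~\ref{thm: lepski} for $\tilde\theta(\tilde\ell)$; since $|G|=M+1\lesssim\log(n/d)$, a union bound yields $\mathbf P(\mathcal E)\geq 1-c_1\log(n/d)e^{-c_0d}$, and everything below is argued on $\mathcal E$.

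\textbf{Step 1: $\tilde m_2\leq m^{\ast}$.} Here I verify the defining inequality of $\tilde m_2$ for every $k\geq m^{\ast}+1$. For such $k$ one has $g_{k-1}\geq g_{m^{\ast}}\geq\ell^{\ast\ast}$, and by \eqref{ell **} also $g_k\geq\ell^{\ast\ast}\geq n(\delta\vee\|\theta\|^2)\vee d$. Applying Lemma~\ref{lem: property for large ell} to $g_k$ and to $g_{k-1}\leq g_k$, and using $g_k\geq n\|\theta\|^2$ to absorb the additive $d/n$, the pseudo-triangle inequality gives $\ell^2(\tilde\theta(g_k),\tilde\theta(g_{k-1}))\lesssim \tfrac{\sqrt{dg_k}}{n}\wedge\tfrac{dg_k}{n^2\|\theta\|^2}$. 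It remains to dominate this by the threshold $4C^2\bigl(\tfrac{\sqrt{dg_k}}{n}\wedge\tfrac{dg_k}{n^2\|\tilde\theta(\tilde\ell)\|^2}\bigr)$: if $\|\tilde\theta(\tilde\ell)\|^2\leq 2\|\theta\|^2$ the two are within a constant factor; otherwise $\|\theta\|^2<\tfrac12\|\tilde\theta(\tilde\ell)\|^2\lesssim\sqrt{\delta d/n}+d/n$, hence $\|\tilde\theta(\tilde\ell)\|^2\lesssim\sqrt{\delta d/n}+d/n$, and then $g_k\geq\ell^{\ast\ast}\gtrsim n\delta\vee d$ forces $\tfrac{\sqrt{dg_k}}{n}\lesssim\tfrac{dg_k}{n^2(\sqrt{\delta d/n}+d/n)}\lesssim\tfrac{dg_k}{n^2\|\tilde\theta(\tilde\ell)\|^2}$, so in both cases the inequality holds for $C$ large enough. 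Thus $\tilde\ell_2=g_{\tilde m_2}\leq g_{m^{\ast}}$.

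\textbf{Step 2: adaptation error and conclusion.} Since $\tilde m_2\leq m^{\ast}$, the defining inequality of $\tilde m_2$ holds for all $k\in\{\tilde m_2+1,\dots,m^{\ast}\}$; chaining along the grid and bounding the geometric sums $\sum_k 2^{k/4}$ and $\sum_k 2^{k/2}$ by their last term ($k=m^{\ast}$) gives $\ell^2(\tilde\theta(\tilde\ell_2),\tilde\theta(g_{m^{\ast}}))\lesssim \tfrac{\sqrt{d\ell^{\ast\ast}}}{n}\wedge\tfrac{d\ell^{\ast\ast}}{n^2\|\tilde\theta(\tilde\ell)\|^2}$. Since $g_{m^{\ast}}\geq\ell^{\ast\ast}$, Lemma~\ref{lem: property for large ell} (at $\ell=g_{m^{\ast}}\sim\ell^{\ast\ast}$) gives $\ell^2(\tilde\theta(g_{m^{\ast}}),\theta)\lesssim\tfrac{\sqrt{d\ell^{\ast\ast}}}{n}\wedge\bigl(\tfrac{\ell^{\ast\ast}d}{n^2\|\theta\|^2}+\tfrac dn\bigr)$, and a deterministic term-by-term comparison in \eqref{ell **} shows this is $\lesssim\bigl(\sqrt{\delta d/n}+d/n\bigr)\wedge\bigl(\tfrac{\ell^{\ast\ast}d}{n^2\|\theta\|^2}+\tfrac dn\bigr)$ — this is precisely the global optimality of $\tilde\theta(\ell^{\ast\ast})$ already contained in Proposition~\ref{prop: rates for theta_2}. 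Adding the two bounds via the pseudo-triangle inequality, it then suffices to argue that the first summand $\tfrac{\sqrt{d\ell^{\ast\ast}}}{n}\wedge\tfrac{d\ell^{\ast\ast}}{n^2\|\tilde\theta(\tilde\ell)\|^2}$ is itself $\lesssim\bigl(\sqrt{\delta d/n}+d/n\bigr)\wedge\bigl(\tfrac{\ell^{\ast\ast}d}{n^2\|\theta\|^2}+\tfrac dn\bigr)$: when $\|\theta\|^2\geq C_0(\sqrt{\delta d/n}+d/n)$ this follows because $\|\tilde\theta(\tilde\ell)\|^2\sim\|\theta\|^2$, and in the complementary regime because there the right-hand side is $\sim\sqrt{\delta d/n}+d/n$ (again using $\ell^{\ast\ast}\gtrsim n\delta\vee d$) while the same term-by-term bound gives $\tfrac{\sqrt{d\ell^{\ast\ast}}}{n}\lesssim\sqrt{\delta d/n}+d/n$. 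This yields the announced local bound, and the global bound is immediate since the right-hand side never exceeds $\sqrt{\delta d/n}+d/n$.

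\textbf{Main obstacle.} The delicate part is Step 1: controlling $\tilde m_2$ from above when the first-stage norm estimate $\|\tilde\theta(\tilde\ell)\|$ is \emph{not} comparable to $\|\theta\|$, which occurs exactly when $\|\theta\|^2$ lies below the global rate. There the threshold $\tfrac{dg_k}{n^2\|\tilde\theta(\tilde\ell)\|^2}$ could a priori be far smaller than the triangle-inequality bound on $\ell^2(\tilde\theta(g_k),\tilde\theta(g_{k-1}))$, and ruling this out requires the two quantitative inputs $\|\tilde\theta(\tilde\ell)\|^2\lesssim\sqrt{\delta d/n}+d/n$ in that regime and $g_k\geq\ell^{\ast\ast}\gtrsim n\delta\vee d$; the same lower bound on $\ell^{\ast\ast}$ is what makes the deterministic comparisons in Step 2 go through. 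The remaining ingredients — the geometric-sum telescoping, the pseudo-triangle manipulations, and the finitely many case distinctions on which term of \eqref{ell **} dominates — are routine.
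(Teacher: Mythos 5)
Your proposal is correct and follows essentially the same route as the paper's proof: a Lepski comparison of $\tilde{\ell}_2$ with the grid point just above $\ell^{**}$, ruling out $\tilde{\ell}_2>\ell^{**}$ by lower-bounding the threshold $\omega_k^2$ via the control of $\|\tilde\theta(\tilde\ell)\|$ from Theorem \ref{thm: lepski} (your Step 1 is the deterministic-on-a-good-event version of the paper's bound on $I_2$), then chaining the accepted increments with geometric sums and adding the oracle error at $g_{m^*}\sim\ell^{**}$ from Lemma \ref{lem: property for large ell}/Proposition \ref{prop: rates for theta_2} (the paper's bound on $I_1$). The only cosmetic difference is that you rederive the oracle bound by a term-by-term comparison in \eqref{ell **} rather than quoting Proposition \ref{prop: rates for theta_2} directly, which changes nothing of substance.
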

As a result, we have showed that full adaptation is possible for globally minimax optimal procedures (up to logarithmic factors). One may also observe that the bound in Theorem \ref{thm: lepski2} is exactly the same as the one in Proposition \ref{prop: rates for theta_2}  replacing $\ell^{**}$ by its value. Hence we recover the local estimation rates of $\tilde{\theta}(\ell^{**})$ (cf. Figure \ref{fig:tikz}) in a fully adaptive fashion. In other terms, our adaptive procedure $\tilde{\theta}(\tilde{\ell}_2)$ is strictly better than the vanilla spectral method not only globally but also locally. 
\section{Conclusion and future work}
\subsection{Conclusion}
As a conclusion, our contribution represents an improvement of previous results when $\delta$ is known. Not only we get rid of the logarithmic factors, but we show that our approach achieves optimal rates locally under tighter bounds and holds under (sub)Gaussian noise.

 More specifically, we show that the optimal procedure depends on both $\delta$ and $\|\theta\|$. We then propose a semi-adaptive approach (adaptive to the signal), showing that minimax optimality requires only knowledge of $\delta$. As for full adaptation, we claim that plug-in estimation of $\delta$ might not be optimal and that local minimax optimality is rather difficult  without prior knowledge of $\delta$. We finally propose an adaptive procedure that bypasses estimation of $\delta$ and that is globally minimax optimal.

 In summary, our study refines center estimation in the high-dimensional Markovian binary GMM, and shows that we can reach the improved global rate $\sqrt{\delta d/n} + d/n$ instead of the classical rate $\sqrt{ d/n} + d/n$ and that it can be done adaptively. Hence the hidden Markovian structure of the labels can be leveraged to improve center estimation without further information about this structure. 

\subsection{Future work}
As for future research, one interesting direction is to characterize the adaptive locally minimax optimal rates where we conjecture the rates to be different from the non adaptive ones in general. An intriguing phenomenon observed during our study is that, when $d \leq n$, it seems possible to estimate the center of the mixture while impossible to estimate the parameter $\delta$. As opposed to our intuition, the $d$-dimensional vector $\theta$ seems easier to estimate than the scalar $\delta$. A very interesting question is to characterize the minimax rate of estimation for $\delta$ in the high dimensional mixture model. The challenging part, would be to capture the right dimension dependence in the lower bound. We believe that this is an interesting direction of research.


Another extension of our work could be a generalization of the current binary Markov chain into more sophisticated graph structures. In the current model, the assumption is that each new observation depends only on the previous one. We can extend the memory by assuming that each signs depends on two or more previous data points as an example.

\bibliographystyle{imsart-nameyear}

\begin{thebibliography}{23}

\bibitem[\protect\citeauthoryear{Abbe, Fan and Wang}{2022}]{abbe2022}
\begin{barticle}[author]
\bauthor{\bsnm{Abbe},~\bfnm{Emmanuel}\binits{E.}},
  \bauthor{\bsnm{Fan},~\bfnm{Jianqing}\binits{J.}} \AND
  \bauthor{\bsnm{Wang},~\bfnm{Kaizheng}\binits{K.}}
(\byear{2022}).
\btitle{An $\ell_p$ theory of PCA and spectral clustering}.
\bjournal{The Annals of Statistics}
\bvolume{50}
\bpages{2359--2385}.
\end{barticle}
\endbibitem

\bibitem[\protect\citeauthoryear{Abraham, Gassiat and Naulet}{2023}]{ab_2023}
\begin{barticle}[author]
\bauthor{\bsnm{Abraham},~\bfnm{Kweku}\binits{K.}},
  \bauthor{\bsnm{Gassiat},~\bfnm{Elisabeth}\binits{E.}} \AND
  \bauthor{\bsnm{Naulet},~\bfnm{Zacharie}\binits{Z.}}
(\byear{2023}).
\btitle{Fundamental Limits for Learning Hidden Markov Model Parameters}.
\bjournal{IEEE Transactions on Information Theory}
\bvolume{69}
\bpages{1777–1794}.
\bdoi{10.1109/tit.2022.3213429}
\end{barticle}
\endbibitem

\bibitem[\protect\citeauthoryear{Alexandrovich, Holzmann and
  Leister}{2015}]{alex2015}
\begin{bmisc}[author]
\bauthor{\bsnm{Alexandrovich},~\bfnm{Grigory}\binits{G.}},
  \bauthor{\bsnm{Holzmann},~\bfnm{Hajo}\binits{H.}} \AND
  \bauthor{\bsnm{Leister},~\bfnm{Anna}\binits{A.}}
(\byear{2015}).
\btitle{Nonparametric identification and maximum likelihood estimation for
  hidden Markov model}.
\end{bmisc}
\endbibitem

\bibitem[\protect\citeauthoryear{Balakrishnan, Wainwright and Yu}{2017}]{balak}
\begin{barticle}[author]
\bauthor{\bsnm{Balakrishnan},~\bfnm{Sivaraman}\binits{S.}},
  \bauthor{\bsnm{Wainwright},~\bfnm{Martin~J}\binits{M.~J.}} \AND
  \bauthor{\bsnm{Yu},~\bfnm{Bin}\binits{B.}}
(\byear{2017}).
\btitle{Statistical guarantees for the EM algorithm: From population to
  sample-based analysis}.
\end{barticle}
\endbibitem

\bibitem[\protect\citeauthoryear{Castro, Élisabeth Gassiat and
  Lacour}{2015}]{decastro}
\begin{bmisc}[author]
\bauthor{\bsnm{Castro},~\bfnm{Yohann~De}\binits{Y.~D.}},
  \bauthor{\bparticle{Élisabeth} \bsnm{Gassiat}} \AND
  \bauthor{\bsnm{Lacour},~\bfnm{Claire}\binits{C.}}
(\byear{2015}).
\btitle{Minimax adaptive estimation of nonparametric hidden Markov models}.
\end{bmisc}
\endbibitem

\bibitem[\protect\citeauthoryear{Chen and Yang}{2021}]{chen2021cutoff}
\begin{barticle}[author]
\bauthor{\bsnm{Chen},~\bfnm{Xiaohui}\binits{X.}} \AND
  \bauthor{\bsnm{Yang},~\bfnm{Yun}\binits{Y.}}
(\byear{2021}).
\btitle{Cutoff for exact recovery of gaussian mixture models}.
\bjournal{IEEE Transactions on Information Theory}
\bvolume{67}
\bpages{4223--4238}.
\end{barticle}
\endbibitem

\bibitem[\protect\citeauthoryear{Dwivedi et~al.}{2018}]{raz2}
\begin{barticle}[author]
\bauthor{\bsnm{Dwivedi},~\bfnm{Raaz}\binits{R.}},
  \bauthor{\bsnm{Khamaru},~\bfnm{Koulik}\binits{K.}},
  \bauthor{\bsnm{Wainwright},~\bfnm{Martin~J}\binits{M.~J.}},
  \bauthor{\bsnm{Jordan},~\bfnm{Michael~I}\binits{M.~I.}} \betal{et~al.}
(\byear{2018}).
\btitle{Theoretical guarantees for EM under misspecified Gaussian mixture
  models}.
\bjournal{Advances in Neural Information Processing Systems}
\bvolume{31}.
\end{barticle}
\endbibitem

\bibitem[\protect\citeauthoryear{Dwivedi et~al.}{2019}]{raz1}
\begin{barticle}[author]
\bauthor{\bsnm{Dwivedi},~\bfnm{Raaz}\binits{R.}},
  \bauthor{\bsnm{Ho},~\bfnm{Nhat}\binits{N.}},
  \bauthor{\bsnm{Khamaru},~\bfnm{Koulik}\binits{K.}},
  \bauthor{\bsnm{Wainwright},~\bfnm{Martin~J.}\binits{M.~J.}},
  \bauthor{\bsnm{Jordan},~\bfnm{Michael~I.}\binits{M.~I.}} \AND
  \bauthor{\bsnm{Yu},~\bfnm{Bin}\binits{B.}}
(\byear{2019}).
\btitle{Challenges with EM in application to weakly identifiable mixture
  models}.
\bjournal{ArXiv}
\bvolume{abs/1902.00194}.
\end{barticle}
\endbibitem

\bibitem[\protect\citeauthoryear{Dwivedi et~al.}{2020a}]{raz3}
\begin{barticle}[author]
\bauthor{\bsnm{Dwivedi},~\bfnm{Raaz}\binits{R.}},
  \bauthor{\bsnm{Ho},~\bfnm{Nhat}\binits{N.}},
  \bauthor{\bsnm{Khamaru},~\bfnm{Koulik}\binits{K.}},
  \bauthor{\bsnm{Wainwright},~\bfnm{Martin}\binits{M.}},
  \bauthor{\bsnm{Jordan},~\bfnm{Michael}\binits{M.}} \AND
  \bauthor{\bsnm{Yu},~\bfnm{Bin}\binits{B.}}
(\byear{2020}a).
\btitle{Sharp analysis of expectation-maximization for weakly identifiable
  models}.
\bpages{1866--1876}.
\end{barticle}
\endbibitem

\bibitem[\protect\citeauthoryear{Dwivedi et~al.}{2020b}]{raz4}
\begin{bmisc}[author]
\bauthor{\bsnm{Dwivedi},~\bfnm{Raaz}\binits{R.}},
  \bauthor{\bsnm{Ho},~\bfnm{Nhat}\binits{N.}},
  \bauthor{\bsnm{Khamaru},~\bfnm{Koulik}\binits{K.}},
  \bauthor{\bsnm{Jordan},~\bfnm{Michael~I.}\binits{M.~I.}},
  \bauthor{\bsnm{Wainwright},~\bfnm{Martin~J.}\binits{M.~J.}} \AND
  \bauthor{\bsnm{Yu},~\bfnm{Bin}\binits{B.}}
(\byear{2020}b).
\btitle{Singularity, Misspecification, and the Convergence Rate of EM}.
\end{bmisc}
\endbibitem

\bibitem[\protect\citeauthoryear{Gassiat, Kaddouri and
  Naulet}{2023}]{gassiat2023model}
\begin{barticle}[author]
\bauthor{\bsnm{Gassiat},~\bfnm{Elisabeth}\binits{E.}},
  \bauthor{\bsnm{Kaddouri},~\bfnm{Ibrahim}\binits{I.}} \AND
  \bauthor{\bsnm{Naulet},~\bfnm{Zacharie}\binits{Z.}}
(\byear{2023}).
\btitle{Model-based Clustering using Non-parametric Hidden Markov Models}.
\bjournal{arXiv preprint arXiv:2309.12238}.
\end{barticle}
\endbibitem

\bibitem[\protect\citeauthoryear{Giraud and Verzelen}{2019}]{giraud2019partial}
\begin{barticle}[author]
\bauthor{\bsnm{Giraud},~\bfnm{Christophe}\binits{C.}} \AND
  \bauthor{\bsnm{Verzelen},~\bfnm{Nicolas}\binits{N.}}
(\byear{2019}).
\btitle{Partial recovery bounds for clustering with the relaxed $ K $-means}.
\bjournal{Mathematical Statistics and Learning}
\bvolume{1}
\bpages{317--374}.
\end{barticle}
\endbibitem

\bibitem[\protect\citeauthoryear{Klusowski and Brinda}{2016}]{EM-klus}
\begin{barticle}[author]
\bauthor{\bsnm{Klusowski},~\bfnm{Jason~M}\binits{J.~M.}} \AND
  \bauthor{\bsnm{Brinda},~\bfnm{WD}\binits{W.}}
(\byear{2016}).
\btitle{Statistical guarantees for estimating the centers of a two-component
  Gaussian mixture by EM}.
\bjournal{arXiv preprint arXiv:1608.02280}.
\end{barticle}
\endbibitem

\bibitem[\protect\citeauthoryear{Lehéricy}{2021}]{leh2021}
\begin{bmisc}[author]
\bauthor{\bsnm{Lehéricy},~\bfnm{Luc}\binits{L.}}
(\byear{2021}).
\btitle{Nonasymptotic control of the MLE for misspecified nonparametric hidden
  Markov models}.
\end{bmisc}
\endbibitem

\bibitem[\protect\citeauthoryear{L{\"o}ffler, Zhang and
  Zhou}{2021}]{loffler2021optimality}
\begin{barticle}[author]
\bauthor{\bsnm{L{\"o}ffler},~\bfnm{Matthias}\binits{M.}},
  \bauthor{\bsnm{Zhang},~\bfnm{Anderson~Y}\binits{A.~Y.}} \AND
  \bauthor{\bsnm{Zhou},~\bfnm{Harrison~H}\binits{H.~H.}}
(\byear{2021}).
\btitle{Optimality of spectral clustering in the Gaussian mixture model}.
\bjournal{The Annals of Statistics}
\bvolume{49}
\bpages{2506--2530}.
\end{barticle}
\endbibitem

\bibitem[\protect\citeauthoryear{Lu and Zhou}{2016}]{Lu}
\begin{barticle}[author]
\bauthor{\bsnm{Lu},~\bfnm{Yu}\binits{Y.}} \AND
  \bauthor{\bsnm{Zhou},~\bfnm{Harrison~H}\binits{H.~H.}}
(\byear{2016}).
\btitle{Statistical and computational guarantees of lloyd's algorithm and its
  variants}.
\bjournal{arXiv preprint arXiv:1612.02099}.
\end{barticle}
\endbibitem

\bibitem[\protect\citeauthoryear{Ndaoud}{2019}]{ndaoud2019interplay}
\begin{binproceedings}[author]
\bauthor{\bsnm{Ndaoud},~\bfnm{Mohamed}\binits{M.}}
(\byear{2019}).
\btitle{Interplay of minimax estimation and minimax support recovery under
  sparsity}.
In \bbooktitle{Algorithmic Learning Theory}
\bpages{647--668}.
\bpublisher{PMLR}.
\end{binproceedings}
\endbibitem

\bibitem[\protect\citeauthoryear{Ndaoud}{2020}]{ndaoud2020scaled}
\begin{barticle}[author]
\bauthor{\bsnm{Ndaoud},~\bfnm{Mohamed}\binits{M.}}
(\byear{2020}).
\btitle{Scaled minimax optimality in high-dimensional linear regression: A
  non-convex algorithmic regularization approach}.
\bjournal{arXiv preprint arXiv:2008.12236}.
\end{barticle}
\endbibitem

\bibitem[\protect\citeauthoryear{Ndaoud}{2022}]{ndaoud2022sharp}
\begin{barticle}[author]
\bauthor{\bsnm{Ndaoud},~\bfnm{Mohamed}\binits{M.}}
(\byear{2022}).
\btitle{Sharp optimal recovery in the two component gaussian mixture model}.
\bjournal{The Annals of Statistics}
\bvolume{50}
\bpages{2096--2126}.
\end{barticle}
\endbibitem

\bibitem[\protect\citeauthoryear{Rudelson and
  Vershynin}{2013}]{rudelson2013hanson}
\begin{barticle}[author]
\bauthor{\bsnm{Rudelson},~\bfnm{Mark}\binits{M.}} \AND
  \bauthor{\bsnm{Vershynin},~\bfnm{Roman}\binits{R.}}
(\byear{2013}).
\btitle{Hanson-Wright inequality and sub-Gaussian concentration}.
\end{barticle}
\endbibitem

\bibitem[\protect\citeauthoryear{Vershynin}{2010}]{vershynin2010introduction}
\begin{barticle}[author]
\bauthor{\bsnm{Vershynin},~\bfnm{Roman}\binits{R.}}
(\byear{2010}).
\btitle{Introduction to the non-asymptotic analysis of random matrices}.
\bjournal{arXiv preprint arXiv:1011.3027}.
\end{barticle}
\endbibitem

\bibitem[\protect\citeauthoryear{Wu and Zhou}{2021}]{EM-wu}
\begin{barticle}[author]
\bauthor{\bsnm{Wu},~\bfnm{Yihong}\binits{Y.}} \AND
  \bauthor{\bsnm{Zhou},~\bfnm{Harrison~H}\binits{H.~H.}}
(\byear{2021}).
\btitle{Randomly initialized EM algorithm for two-component Gaussian mixture
  achieves near optimality in $O(\sqrt{n})$ iterations}.
\bjournal{Mathematical Statistics and Learning}
\bvolume{4}.
\end{barticle}
\endbibitem

\bibitem[\protect\citeauthoryear{Zhang and Weinberger}{2022}]{yihan}
\begin{barticle}[author]
\bauthor{\bsnm{Zhang},~\bfnm{Yihan}\binits{Y.}} \AND
  \bauthor{\bsnm{Weinberger},~\bfnm{Nir}\binits{N.}}
(\byear{2022}).
\btitle{Mean Estimation in High-Dimensional Binary Markov Gaussian Mixture
  Models}.
\bjournal{Advances in Neural Information Processing Systems}
\bvolume{35}
\bpages{19673--19686}.
\end{barticle}
\endbibitem

\end{thebibliography}

\appendix
\section{Proofs of main results}

\subsection{Proof of Theorem \ref{thm: known delta loss bound}}
Let us consider the following estimator
\[\label{def: hat_theta}\hat{\theta}(\ell) \coloneqq \sqrt{\frac{\ell}{\mathbf{E}\|\bar{\eta}\|^2}\left(\lambda_{max}(\hat{\Sigma})-\frac{1}{k}\right)_+}  v_{max}(\hat{\Sigma}).\]
Sometimes when the context is clear we will just write $\hat{\theta}$ instead of $\hat{\theta}(\ell)$.
We remind the reader that $\Sigma=\frac{\mathbf{E}\|\bar{\eta}\|^2}{\ell}\theta\theta^T+\frac{\mathbf{I}_d}{k}$, it becomes clear that eigenvalues of $\Sigma$ are $\lambda_{max}(\Sigma)=\|\theta\|^2\frac{\mathbf{E}\|\bar{\eta}\|^2}{\ell}+\frac{1}{k}$ and $\lambda_2=...=\lambda_d=\frac{1}{k}$.

We begin the analysis of our estimator by Weyl's inequality that states that
\[\left|\lambda_{max}(\hat{\Sigma})-\lambda_{max}(\Sigma)\right| \leq \|\hat{\Sigma}-\Sigma\|_{op}.\]
In particular this leads to
$$
\|\theta\|^2\frac{\mathbf{E}\|\bar{\eta}\|^2}{\ell} \leq \lambda_{max}(\hat{\Sigma}) - \frac{1}{k} + \|\hat{\Sigma}-\Sigma\|_{op}.
$$
It comes out that if $\|\hat{\theta}\|=0$ then 
\begin{equation}\label{eq:thm1:1}
|\|\hat{\theta}\|-\|\theta\| |\leq \sqrt{\frac{\ell}{\mathbf{E}\|\bar{\eta}\|^2}}\frac{\|\hat{\Sigma}-\Sigma\|_{op}}{\|\theta\|} .
\end{equation}
Moreover we also have for $\|\hat{\theta}\|\neq 0$ that
\begin{align*}
    \left|\|\hat{\theta}\|-\|\theta\|\right|&=\frac{\left|\|\hat{\theta}\|^2-\|\theta\|^2 \right|}{\|\hat{\theta}\|+\|\theta\|} \\&\leq \frac{\left|\|\hat{\theta}\|^2-\|\theta\|^2 \right|}{\|\theta\|} \\&= \frac{\ell}{\mathbf{E}\|\bar\eta\|^2}\frac{\left|\lambda_{max}(\hat{\Sigma})-\lambda_{max}(\Sigma)\right|}{\|\theta\|} \\&\leq \frac{\ell}{\mathbf{E}\|\bar\eta\|^2}\frac{\|\hat{\Sigma}-\Sigma\|_{op}}{\|\theta\|}.\label{eqn: norm trick}
\end{align*}
We conclude that we always have 
\begin{equation}\label{eq:weyl:conclusion}
   |\|\hat{\theta}\|-\|\theta\| | \leq \frac{\ell}{\mathbf{E}\|\bar\eta\|^2}\frac{\|\hat{\Sigma}-\Sigma\|_{op}}{\|\theta\|},
\end{equation}
since $\mathbf{E}\|\bar\eta\|^2 \leq \ell$. Next observe that the largest eigengap of $\Sigma$ is $\frac{\mathbf{E}\|\bar{\eta}\|^2}{\ell}\|\theta\|^2$. Hence, and using the Davis-Kahan's perturbation bound, we get that 
\begin{equation}\label{eqn: eigenvector loss bound}
\ell(\hat{v}_{max},v_{max})\leq \frac{4\ell}{\mathbf{E} \|\bar{\eta}\|^2}\frac{\|\hat\Sigma-\Sigma\|_{op}}{ \|\theta\|^2}.
\end{equation}
Combining \eqref{eq:weyl:conclusion} and \eqref{eqn: eigenvector loss bound}we finally get that
\[
\ell(\hat{\theta},\theta) \leq \left|\|\hat{\theta}\|-\|\theta\|\right| + \|\theta\|\ell(\hat{v}_{max},v_{max}) \leq \frac{5\ell}{\mathbf{E} \|\bar{\eta}\|^2}\frac{\|\hat{\Sigma}-\Sigma\|_{op}}{\|\theta\|}.
\label{eqn: loss bound via operator norm}
\]
Now we continue with the analysis of the difference of covariance and sample covariance matrices. Recall that
\begin{equation}\label{eq:thm:covariance:decomp}
\hat{\Sigma}-\Sigma=\frac{\theta\theta^T(\|\bar{\eta}\|^2-\mathbf{E}{\|\bar{\eta}\|^2})}{\ell} +\frac{w\bar{\eta}\cdot \theta^T+\theta \cdot(w\bar{\eta})^T}{\sqrt{k}\ell}+\frac{1}{k}\left(\frac{ww^T}{\ell}-\mathbf{I}_d\right).
\end{equation}
Observe that for $\xi:= w\bar{\eta}/\|\bar{\eta}\|$ is a $1-$sub-Gaussian vector. Hence $$\left\|\frac{1}{\sqrt{k}\ell}\left((w\bar{\eta})\theta^T+\theta(w\bar{\eta})^T \right)\right\|_{op}\leq \frac{2}{\sqrt{k}\ell} \|\theta\|\|\bar{\eta}\|\|\xi\|. $$
As a result
\[
\|\hat{\Sigma}-\Sigma\|_{op}\leq \frac{\|\theta\|^2|\|\bar{\eta}\|^2-\mathbf{E}{\|\bar{\eta}\|^2}|}{\ell} + \frac{2}{\sqrt{k}\ell}\|\theta\|\|\bar{\eta}\|\|\xi\|+\frac{1}{k}\left\|\frac{ww^T}{\ell}-I_d\right\|_{op}.
\]
using Lemma \ref{lem:concentration of three terms} we conclude that
\[
\|\hat{\Sigma}-\Sigma\|_{op}\lesssim \sqrt{\frac{d}{\ell}}\|\theta\|^2 + \frac{\sqrt{\ell d}}{\sqrt{k}\ell}\|\theta\| + \frac{d}{k\ell}+\frac{1}{k}\sqrt{\frac{d}{\ell}} \lesssim
\sqrt{\frac{d}{\ell}}\|\theta\|^2 + \sqrt{\frac{d}{n}}\|\theta\| +\left(\frac{d}{n}+\frac{1}{k}\sqrt{\frac{d}{\ell}}\right).
\label{eqn: final operator concentration}
\]
Combining \eqref{eqn: final operator concentration} and \eqref{eqn: loss bound via operator norm} we end up with the desired loss bound
\[
\ell(\hat{\theta},\theta) \lesssim \sqrt{\frac{d}{\ell}}\|\theta\| +\sqrt{\frac{d}{n}}
+ \frac{1}{\|\theta\|}\left(\sqrt{\frac{d}{k n}}+ \frac{d}{n}\right), \label{eqn: final loss bound}
\]
with probability greater than $1-c_1e^{-c_0d}$. 

For the case $\|\theta\|\geq 1$, and by choosing $\ell = n$ there is no bucketing involved and the procedure boils down to the vanilla spectral method. We repeat the same proof except that now $\mathbf{E}\|\bar \eta\|^2=\ell$ and 
hence the difference of covariance matrices is decomposed as follows
$$\hat{\Sigma}-\Sigma=\frac{(w\bar{\eta})\theta^T+\theta(w\bar{\eta})^T}{n}+\left(\frac{ww^T}{n}-\mathbf{I}_d\right).$$
It comes out that with probability greater than $1-c_1e^{-c_0d}$, we have
$$\|\hat{\Sigma}-\Sigma\|_{op}\lesssim \sqrt{\frac{d}{n}}\|\theta\| +\left(\frac{d}{n}+\sqrt{\frac{d}{n}}\right).$$
Lastly, we get, with probability greater than $1-c_1e^{-c_0d}$, that
$$\ell(\hat{\theta},\theta) \lesssim \frac{\|\hat{\Sigma}-\Sigma\|_{op}}{\|\theta\|}\lesssim\sqrt{\frac{d}{n}} +\frac{1}{\|\theta\|}\left(\frac{d}{n}+\sqrt{\frac{d}{n}}\right)\lesssim \sqrt{\frac{d}{n}}+ \frac{d}{n},$$
since $\|\theta\|\geq 1$.

\subsection{Proof of Proposition \ref{prop: theoretical rates}}
Recall that $\ell^*:= d \vee \lceil n( \delta \vee \|\theta\|^2) \rceil \wedge n$. For the rest of the proof we denote $\hat{\theta}:= \hat{\theta}(\ell^*)$. We begin by observing that for the loss function $\ell(\cdot,\cdot)$ we also have the following decomposition
\begin{align*}
    \ell^2(\hat{\theta},\theta) &\leq 
2\|\hat{\theta}\|^2+2\|\theta\|^2 \\&\leq 
2\left|\|\hat{\theta}\|^2-\|\theta\|^2 \right| + 4\|\theta\|^2 \\& \leq \frac{\ell^*}{2\mathbf{E}\|\bar\eta\|^2}\|\hat{\Sigma}-\Sigma\|_{op}+6\|\theta\|^2 \\& \lesssim 
\sqrt{\frac{d}{n}} \|\theta\| + \left(\frac{d}{n}+\frac{1}{k^*}\sqrt{\frac{d}{\ell^*}}\right) + \sqrt{\frac{d}{\ell^*}}\|\theta\|^2 + \|\theta\|^2,
\end{align*}
with probability greater than $1-c_1e^{-c_0d}$, where we have used the fact that $\mathbf{E}\|\bar\eta\|^2 \geq \ell^*/3$ based on Lemma \ref{lem:expecation}.
Hence as long as $\|\theta\|^2 \leq \sqrt{\delta d/n} \vee d/n$ we also have that $\|\theta\|^2 \leq \delta \vee d/n$ and hence $\ell^* = d\vee \lceil n\delta \rceil $. It comes out that
\[
\ell^2(\hat{\theta},\theta)  \lesssim \sqrt\frac{\delta d}{n}+ \frac{d}{n}.
\label{eq: saved from small values}
\]
Next, in the scenario where $\sqrt{\frac{\delta d}{n}}\vee\frac{d}{n} \leq \|\theta\|^2 \leq \delta \vee \frac{d}{n}$. This case only makes sense for $\delta \geq d/n$. Then, by invoking Theorem \ref{thm: known delta loss bound} with the number of buckets  being $\ell^* = \lceil n\delta \rceil $ we get that
$$\ell(\hat{\theta},\theta) \lesssim \sqrt{\frac{d}{n}}\frac{\|\theta\|}{\sqrt{\delta}} +\sqrt{\frac{d}{n}}
+ \frac{1}{\|\theta\|}\sqrt{\frac{\delta d}{n}} \lesssim \sqrt{\frac{d}{n}}+\frac{\sqrt{\frac{\delta d}{n}}}{\|\theta\|}.$$
For the case $\delta  \vee \frac{d}{n} \leq \|\theta\|^2 \leq 1$, then we have $\ell^*= \lceil n\|\theta\|^2 \rceil$. Using Theorem \ref{thm: known delta loss bound} one more time we get 
$$\ell(\hat{\theta},\theta) \lesssim \sqrt{\frac{d}{n\|\theta\|^2}}\|\theta\| +\sqrt{\frac{d}{n}}
+ \frac{1}{\|\theta\|}\sqrt{\frac{\|\theta\|^2 d}{n}}\lesssim\sqrt{\frac{d}{n}}.$$
Finally, for $\|\theta\|\geq 1$, we get that $\ell^*=n$ and we simply conclude using the corresponding case in Theorem \ref{thm: known delta loss bound}. This proof is now complete.

\subsection{Proof of Lemma \ref{lem: s and theta}}
We consider first the case $\|\theta\|^2 \geq \delta\vee Cd/n$. Using \eqref{eqn: final loss bound} we have
\[
\left| \hat{s}-\|\theta\| \right| \lesssim \sqrt{\frac{d}{\hat\ell}}\|\theta\| +\sqrt{\frac{d}{n}}
+ \frac{1}{\|\theta\|}\left(\sqrt{\frac{1}{\hat{k}}}\sqrt{\frac{d}{n}}+\frac{d}{n}\right).
\label{lem: hat and theta diff}
\]
Plugging $\hat{k}\sim \frac{1}{\delta}  \wedge \frac{n}{d}$ and using $\|\theta\|^2 \geq \delta\vee Cd/n $ we get
$$
\left| \hat{s}-\|\theta\| \right| \lesssim \sqrt{\frac{1}{C}}\|\theta\| +\sqrt{\frac{d}{n}}.
$$
It comes out, since $C$ is large enough, that
$$
\left| \hat{s}-\|\theta\| \right| \leq \frac{\|\theta\|}{4}+\frac{\|\theta\|}{4}= \frac{\|\theta\|}{2}.$$
Hence with probability greater than $1-c_1e^{-c_0d}$, we  get
    $$\frac{\|\theta\|}{2}\leq\hat{s}\leq\frac{3\|\theta\|}{2}.$$
Now suppose $\delta \vee Cd/n>\|\theta\|^2 $, and using the control of $\|\hat\Sigma - \Sigma\|_{op}$, we get 
$$
\left| \hat{s}^2-\|\theta\|^2 \right| \lesssim \sqrt{\frac{d}{(n\delta)\vee (Cd)}}\|\theta\|^2 +\sqrt{\frac{d}{n}}\|\theta\|
+ \sqrt{\frac{(\delta\vee Cd/n) d}{n}}.
$$
Then we conclude that
\[
\left| \hat{s}^2-\|\theta\|^2 \right|\leq 2(\delta\vee Cd/n).
\label{eqn: second line square difference}
\]
It follows that $\hat{s}^2 < 3(\delta\vee Cd/n)$ with probability greater than $1-c_1e^{-c_0d}$. 

\subsection{Proof of Theorem \ref{thm: adaptive errors}}
\label{thm: adaptive errors proof}
For this proof, observe that $\hat \ell$ depends on the observations so we cannot apply directly Lemma \ref{lem:concentration of three terms} replacing $\ell$ by $\hat \ell$. Since we only need to know $\hat \ell$ up to a multiplicative factor, we will choose $\hat{\ell}$ on a logarithmic grid. Basically we will choose $\hat{\ell}= d2^{\hat m} \wedge n$ where $\hat{m}$ is the smallest integer such that $\hat{\ell} \geq \lceil n( 3\delta \vee \hat{s}^2) \vee Cd \rceil \wedge n$. Hence and in order to control the deviation terms in Lemma \ref{lem:concentration of three terms} we shall take a supremum for all values of $m$ that are no more than $\log(n/d)$. It comes out that Lemma \ref{lem:concentration of three terms} holds for $\hat\ell$ with probability $1-c_1\log(n/d)e^{-c_0d}$.

    We recall here that using Corollary \ref{cor: 3delta} we have, with probability $1-c_1e^{-c_0d}$, if $\hat{s}^2 <  3(\delta\vee Cd/n)$ we get that $\|\theta\|^2<12(\delta\vee Cd/n)$, and if $\hat{s}^2 \geq  3(\delta\vee Cd/n)$ we get that $ \|\theta\|^2 \geq \delta \vee Cd/n$.
    We analyse the error case by case.
    \begin{itemize}
        \item \textbf{Case 1:}
       $\|\theta\|^2\leq\sqrt\frac{\delta d}{n}\vee \frac{d}{n}$. 
       
       In this case, since $\sqrt\frac{\delta d}{n}\leq (\delta\vee Cd/n)$, then we know using Lemma \ref{lem: s and theta} that $\hat{s}^2 < 3\delta$ and $\ell \sim n\delta \vee d$. As shown before in \eqref{eq: saved from small values} it comes out that $\ell^2(\hat{\theta}(\hat \ell),\theta) \lesssim\sqrt\frac{\delta d}{n} + \frac{d}{n}$.
        \item \textbf{Case 2:} $\sqrt{\frac{\delta d}{n}}\vee \frac{d}{n} \leq \|\theta\|^2 < \delta \vee \frac{d}{n}$. \\ In this case again $\hat{s}^2 < 3\delta$ and $\ell \sim n\delta\vee d$, according to Corollary \ref{cor: 3delta}.
        Continuing with $\ell\sim n\delta \vee d $, and mimicking the corresponding case of Proposition \ref{prop: theoretical rates}, we get
        $$\ell(\hat{\theta}(\hat \ell),\theta) \lesssim \sqrt{\frac{d}{n}}\frac{\|\theta\|}{\sqrt{\delta+ Cd/n}} +\sqrt{\frac{d}{n}}
+ \frac{1}{\|\theta\|}\sqrt{\frac{(\delta \vee Cd/n) d}{n}} \lesssim \sqrt{\frac{d}{n}}+\frac{\sqrt{\frac{(\delta+Cd/n) d}{n}}}{\|\theta\|}\lesssim \frac{{\sqrt\frac{(\delta+d/n) d}{n}}}{\|\theta\|}.$$
        \item \textbf{Case 3:} $\delta\vee \frac{d}{n} \leq \|\theta\|^2 < 4$. 
        \\ In this case we have two options. Either $ 12 (\delta\vee Cd/n)\leq  \|\theta\|^2\leq4$ and hence $\hat{s}^2\geq 3(\delta\vee Cd/n)$ and $\ell\sim n\hat{s}^2$.
        That being the case, similarly to Proposition \ref{prop: theoretical rates}, by taking take $\ell \sim  n\|\theta\|^2$ replacing $\|\theta\|$ with $\hat{s}$ we derive 
        $$\ell(\hat{\theta}(\hat{\ell}),\theta) \lesssim \sqrt{\frac{d}{n\hat{s}^2}}\|\theta\| +\sqrt{\frac{d}{n}}
+ \frac{\hat{s}}{\|\theta\|}\sqrt{\frac{d}{n}}.$$
Using Lemma \ref{lem: s and theta}, we conclude that
$$\ell(\hat{\theta}(\hat{\ell}),\theta) \lesssim \sqrt{\frac{d}{n}} +\sqrt{\frac{d}{n}}
+ \sqrt{\frac{d}{n}}\lesssim\sqrt{\frac{d}{n}}.$$
In the remaining case where $\delta\vee \frac{d}{n}  \leq \|\theta\|^2\leq 12 (\delta\vee C\frac{d}{n}) $ we know that $\hat\ell$  will be of order $n\delta\vee d $ and the result is straightforward.

        \item Finally if $\|\theta\|^2 \geq 4$, then $\hat{s}^2\geq 1$ and $\hat\ell = n$. The result follows immediately.
        
    \end{itemize}
\subsection*{Proof of Theorem \ref{thm: unkown delta first loss bound}}
Repeating the same decomposition as before in \eqref{eqn: loss bound via operator norm} we get
\[
\ell(\tilde{\theta},\theta) \leq \left|\|\tilde{\theta}\|-\|\theta\|\right| + \|\theta\|\ell(\hat{v}_{max},v_{max}) \lesssim \frac{\|\hat{\Sigma}-\Sigma\|_{op}}{\|\theta\|}.
\]
The covariance decomposition is slightly different this time as we get
$$
\hat{\Sigma}-\Sigma=\frac{\theta\theta^T(\|\bar{\eta}\|^2-\mathbf{E}{\|\bar{\eta}\|^2} + \mathbf{E}{\|\bar{\eta}\|^2} - \ell)}{\ell} +\frac{w\bar{\eta}\cdot \theta^T+\theta \cdot(w\bar{\eta})^T}{\sqrt{k}\ell}+\frac{1}{k}\left(\frac{ww^T}{\ell}-\mathbf{I}_d\right).$$
Since $|\mathbf{E}{\|\bar{\eta}\|^2} - \ell| \leq n\delta$ as claimed in Lemma \ref{lem:expecation}, we get that 
$$\ell(\tilde{\theta},\theta)\lesssim \frac{\|\hat{\Sigma}-\Sigma\|_{op}}{\|\theta\|}+\frac{n\delta}{\ell}\|\theta\|.$$
Recalling our concentration bound as in \eqref{eqn: final loss bound} we finally derive that

\[\ell(\tilde{\theta},\theta) \lesssim \left(\sqrt{\frac{d}{\ell}}+\frac{n\delta}{\ell}\right)\|\theta\| +\sqrt{\frac{d}{n}}
+ \frac{1}{\|\theta\|}\left(\sqrt{\frac{1}{k}}\sqrt{\frac{d}{n}} +\frac{d}{n}\right),
\label{eqn: theta2 loss}
\]
with probability $1-c_1e^{-c_0d}$.

\subsection*{Proof of Proposition \ref{prop: rates for theta_2}}
Recall that $\ell^{**}=d\vee\left\lceil \left(n\delta\vee\frac{\delta^{2/3}n^{4/3}\|\theta\|^{4/3}}{d^{1/3}}\vee n \|\theta\|^2\right) \right\rceil\wedge n$. For the rest of the proof, and for simplicity, we denote $\tilde \theta(\ell^{**})$ by  $\tilde \theta$ and $\ell$ by $\ell^{**}$ .To begin with we analyse the loss rate when signal strength $\|\theta\|^2$ is small.
As we have already observed from the proof of Proposition \ref{prop: theoretical rates} we have that
\begin{equation}\label{eq:34}
    \ell^2(\tilde{\theta},\theta)  \lesssim
\sqrt{\frac{d}{n}} \|\theta\| + \left(\frac{d}{n}+\frac{1}{k}\sqrt{\frac{d}{\ell}}\right) + \left(1+\sqrt{\frac{d}{\ell}}\right)\|\theta\|^2,
\end{equation}
for $\|\theta\|^2 \leq \sqrt{\frac{\delta d}{n}}\vee \frac{d}{n}$. Observe that in that case $\ell^{**}\sim n\delta \vee d$ and hence
\begin{align*}
\ell^2(\tilde{\theta},\theta)  &\lesssim 
\sqrt{\frac{d}{n}} \|\theta\| + \left(\frac{d}{n}+\delta\sqrt{\frac{d}{n\delta}}\right) + \left(1+\sqrt{\frac{d}{n\delta}}\right)\|\theta\|^2 \\ &= 
\sqrt{\frac{d}{n}} \|\theta\| + \frac{d}{n} + \sqrt{\frac{\delta d}{n}} + \|\theta\|^2 + \sqrt{\frac{d}{n\delta}}\|\theta\|^2 \\ &\leq   \sqrt{\frac{\delta d}{n}} + \frac{d}{n}.
\end{align*}
\\
Next, let's consider two cases for $\delta$. For the sake of convenience in the subsequent lines the bounds are derived for the loss function itself, not for the square of it.
\begin{itemize}
    \item Case 1: $\delta\leq\sqrt{\frac{d}{n}}$ \\

Here, in the scenario when $\sqrt{\frac{\delta d}{n}}\vee \frac{d}{n} \leq \|\theta\|^2 \leq \frac{\delta^2n}{d}\vee \frac{d}{n}$ we continue the process with the following number of buckets: 
$$\ell\sim \frac{\delta^{\frac{2}{3}}n^{\frac{4}{3}}\|\theta\|^{\frac{4}{3}}}{d^{\frac{1}{3}}}.
$$
Observe that $\ell \leq n$ as $\delta \leq \sqrt{d/n}$.
Now let's analyse the loss bound \eqref{eqn: theta2 loss} with this choice of $\ell$.
$$\sqrt{\frac{d}{\ell}} \sim \frac{d^{2/3}}{\delta^{1/3}n^{2/3}\|\theta\|^{2/3}}
,\;\; \frac{n\delta}{\ell} \sim \frac{\delta^{1/3}d^{1/3}}{n^{1/3}\|\theta\|^{4/3}},\;\; \sqrt{\frac{d\ell}{n^2}} \sim \frac{\delta^{1/3}d^{1/3}\|\theta\|^{2/3}}{n^{1/3}}.$$
Hence
$$\ell(\tilde{\theta},\theta) \lesssim \frac{d^{2/3}\|\theta\|^{1/3}}{\delta^{1/3}n^{2/3}}+\frac{\delta^{1/3}d^{1/3}}{n^{1/3}\|\theta\|^{1/3}} +\sqrt{\frac{d}{n}}\lesssim\left(\frac{\frac{\delta d}{n}}{\|\theta\|}\right)^{\frac{1}{3}}.
$$
Next, if $\frac{\delta^2n}{d}\vee \frac{d}{n} \leq \|\theta\|^2\leq 1$ we continue with $$\ell\sim n\|\theta\|^2,$$ achieving the parametric rate
$$\ell(\tilde{\theta},\theta)\lesssim\sqrt{\frac{d}{n}}+\frac{\delta}{\|\theta\|} +\sqrt{\frac{d}{n}}
+ \sqrt{\frac{d}{n}}\lesssim\sqrt{\frac{d}{n}}.$$
Lastly when $\|\theta\|^2\geq 1$ the parametric rate can be recovered with $\ell=n$.\\

\item Case 2  : $\sqrt{\frac{d}{n}}\leq\delta$\\
The first scenario is when $\sqrt{\frac{\delta d}{n}} \leq \|\theta\|^2 \leq \frac{\sqrt{\frac{d}{n}}}{\delta}$. Here we have that
$$\ell \sim \frac{\delta^{\frac{2}{3}}n^{\frac{4}{3}}\|\theta\|^{\frac{4}{3}}}{d^{\frac{1}{3}}}.
$$
Again, plugging in $\ell$ in \eqref{eqn: theta2 loss} and using $\sqrt{\frac{d}{n}}\leq\delta$ we get moreover that
$$\ell(\tilde{\theta},\theta) \lesssim \frac{d^{2/3}\|\theta\|^{1/3}}{\delta^{1/3}n^{2/3}}+\frac{\delta^{1/3}d^{1/3}}{n^{1/3}\|\theta\|^{1/3}} +\sqrt{\frac{d}{n}}\leq \left(\frac{\delta^{1/3}d^{1/3}}{n^{1/3}\|\theta\|^{1/3}} +\sqrt{\frac{d}{n}}
    \right)\lesssim\left(\frac{\frac{\delta d}{n}}{\|\theta\|}\right)^{\frac{1}{3}}.
$$
Furthermore, if $\frac{\sqrt{\frac{d}{n}}}{\delta} \leq \|\theta\|^2$ with the choice of $\ell_3=n$ we recover  the usual rate of the vanilla spectral method,
$$\ell(\tilde{\theta},\theta) \lesssim \sqrt{\frac{d}{n}}
+ \frac{1}{\|\theta\|}\sqrt{\frac{d}{n}}\lesssim\begin{cases}
    \frac{\sqrt{\frac{d}{n}}}{\|\theta\|},  & \|\theta\|^2 \leq 1\\
\sqrt{\frac{d}{n}}, & 1 \leq \|\theta\|^2. \\
\end{cases}$$
In both cases the compact formula for $\ell$ is given by
$$\ell^{**}\sim d\vee \left(n\delta\vee\frac{\delta^{2/3}n^{4/3}\|\theta\|^{4/3}}{d^{1/3}}\right)\wedge n.$$
\end{itemize}
\begin{rem}
It is rather convenient to have one compact form for the number of buckets in both cases of Proposition \ref{prop: rates for theta_2}. In that respect, let's notice that in Case 2 when $\|\theta\|\leq1$ then $\ell^{**} \geq n\|\theta\|^2 $. Indeed, since $\delta \geq d/n$, we have that either
$$\ell_1=n\delta \geq n\sqrt{\frac{\delta d}{n}}\geq n\|\theta\|^2,$$
or
$$\ell_2=\frac{\delta^{\frac{2}{3}}n^{\frac{4}{3}}\|\theta\|^{\frac{4}{3}}}{d^{\frac{1}{3}}}\geq\frac{d^{\frac{1}{3}}n^{\frac{4}{3}}\|\theta\|^{\frac{4}{3}}}{n^{\frac{1}{3}}d^{\frac{1}{3}}}=n\|\theta\|^{\frac{4}{3}}\geq n\|\theta\|^2.$$
This means that in Case 2 taking the maximum with additional $n\|\theta\|^2$ won't change anything
$$\left(n\delta\vee\frac{\delta^{2/3}n^{4/3}\|\theta\|^{4/3}}{d^{1/3}}\right)=\left(n\delta\vee\frac{\delta^{2/3}n^{4/3}\|\theta\|^{4/3}}{d^{1/3}}\vee n\|\theta\|^2\right).$$
Therefore we can say that $\ell^{**}$ takes the following general form 
$$\ell^{**}\sim d \vee \left(n\delta\vee\frac{\delta^{2/3}n^{4/3}\|\theta\|^{4/3}}{d^{1/3}}\vee n \|\theta\|^2\right)\wedge n.$$
\end{rem}
As a consequence of this, let's declare that with this choice we have one that either $\ell^{**}=n$ or the following three inequalities hold together
\[\ell^{**}\geq n\delta \vee d, \;\;\;\;\;\; \ell^{**}\geq\frac{\delta^{2/3}n^{4/3}\|\theta\|^{4/3}}{d^{1/3}}, \;\;\;\;\;\; \ell^{**} \geq n \|\theta\|^2 \label{in: l has three bounds}. \]
This observation will be useful later on.

\subsection{Proof of Theorem \ref{thm: lepski}}
 For this proof, let us denote $\tilde\theta:=\tilde\theta(\tilde \ell)$, $\tilde{\theta}_k := \tilde\theta(g_k)$ and $\omega^2_k:=4 C^2 \left(\frac{\sqrt{d g_k}}{n}\wedge\frac{d g_k}{n^2\|\tilde\theta(g_k) \|^2 }\right)$.
 We have for any $\omega$,
    \[
\mathbf{P}(\ell(\tilde{\theta},\theta)\geq \omega)=
\mathbf{P}\left(\ell(\tilde{\theta},\theta)\geq \omega \; \cap \; \{\tilde{\ell}\leq\ell^{**}\}\right) + \mathbf{P}\left(\ell(\tilde{\theta},\theta)\geq \omega \; \cap \;\{ \tilde{\ell}>\ell^{**}\}\right)\dfn I_1 + I_2.
\]
We also recall using \eqref{eq:34} that for $g_k \geq n\delta$ we have
\[
    \left| \|\tilde\theta(g_k) \|^2 - \|\theta\|^2\right|  \lesssim
\sqrt{\frac{d}{n}} \|\theta\| + \left(\frac{d}{n}+\sqrt{\frac{g_k d}{n^2}}\right) + \left(\frac{n\delta}{g_k}+\sqrt{\frac{d}{g_k}}\right)\|\theta\|^2,
\]
with probability $1-c_1\cdot e^{-c_0d}$.

\textit{Bounding $I_1$:}
We begin by defining the smallest element $g_{m_{*}}$ from grid $G$ that is greater than $\ell^{**}$. Our intention here is to replace the unknown $\ell^{**}$ with a $\ell$ in a close range, for which further analysis is tractable.\\ 
Formally, 
$$m_*\dfn \min\{m\in\{0,1,\dots,M\}\;:\; \ell^{**}\leq g_m\}.$$ 
By this definition $g_{m_*-1}<\ell^{**}$, hence $g_{m_*}$ differs from $\ell^{**}$ by a factor of 2 at most, since $g_{m_*}=2g_{m_*-1}<2\ell^{**}$. Let $\epsilon_*$ and $ \epsilon_k$ be signs such that $\ell(\tilde{\theta}_{m_*},\theta) =\|\epsilon_{m^*}\tilde{\theta}_{m_*}-\theta\| $ and for all $\tilde{m}+1 \leq k\leq m_*$ we have $\ell(\tilde{\theta}_{k},\tilde{\theta}_{k-1}) =\|\epsilon_{k}\tilde{\theta}_{k}-\epsilon_{k-1}\tilde{\theta}_{k-1}\|$. 

We decompose the difference using triangle inequality by stepping through grid values until we reach the first number of buckets $g_{m_*}$.
$$\ell(\tilde\theta,\theta) \leq \|\tilde\theta -\epsilon_{\tilde m} \theta\| \leq \|\epsilon_{\tilde m} \tilde{\theta}_{g_{\tilde{m}}}-  \theta\| \leq\sum_{k=\tilde{m}+1}^{m_*}\|\epsilon_{ k}\tilde{\theta}_{k}-\epsilon_{k-1}\tilde{\theta}_{k-1}\|+ \|\epsilon_{m^*}\tilde{\theta}_{m_*}-\theta\| \leq\sum_{k=1}^{m_*}\omega_k+\ell(\tilde{\theta}_{m_*},\theta).$$
In order to deal with the sum $\sum_{k=1}^{m_*}\omega_k$ we decompose the sum into two sums $\sum_{k=1}^{m_1}\omega_k$ and $\sum_{k=m_1+1}^{m_*}\omega_k$, where $g_{m_1} \sim n\delta$. It is straightforward, using gemetric series, that 
$$
\left(\sum_{k=1}^{m_1}\omega_k\right)^2 \lesssim  \sqrt{\frac{d\delta}{n}}+\frac{d}{n}.
$$
It remains to control $\sum_{k=m_1+1}^{m_*}\omega_k$ where all $g_k \geq n\delta$.

On the one hand, when $\|\theta\|^2 \leq \frac{\sqrt{d\ell^{**}}}{n} $, we have that $w^2_k \leq 4C^2\frac{\sqrt{d g_k}}{n}$ and the sum $\sum_{k=\tilde{m}+1}^{m_*}\omega_k$ can be upper bounded by a sum of a geometric progression, and so
$$\sum_{k=\tilde{m}+1}^{m_*}\omega_k\leq\sum_{k=1}^{m_*}\omega_k=2C\frac{\sqrt[4]{d g_1}}{\sqrt{n}}\sum_{k=1}^{m_*}(\sqrt[4]{2})^k=2C\frac{\sqrt[4]{d g_1}}{\sqrt{n}}\frac{(\sqrt[4]{2})^{m_*+1}-1}{\sqrt[4]{2}-1}\leq 2C\frac{\sqrt[4]{d g_{m_*+1}}}{\sqrt{n}} .$$
It comes out that 
$$
\left(\sum_{k=m_1}^{m_*}\omega_k\right)^2 \lesssim \left( \sqrt{\frac{\delta d}{n}} + \frac{d}{n}\right).
$$
On the other hand, when $ \frac{\sqrt{d\ell^{**}}}{n}  \leq \|\theta\|^2 \leq 1$, we have, with probability greater than $1-c_1\cdot e^{-c_0d}$, that
$$
\|\tilde\theta(g_k) \|^2 \gtrsim \|\theta\|^2.
$$
It comes out that $w^2_k \lesssim 4C^2\frac{d g_k}{n^2\|\theta\|^2}$ and the sum $\sum_{k=\tilde{m}+1}^{m_*}\omega_k$ can be upper bounded by a sum of a geometric progression, and so
$$
\left(\sum_{k=m_1}^{m_*}\omega_k\right)^2 \lesssim \frac{d\ell^{**}}{n^2\|\theta\|^2}.
$$
It comes out that 
$$
\left(\sum_{k=m_1}^{m_*}\omega_k\right)^2 \lesssim \frac{d\ell^{**}}{n^2\|\theta\|^2} \wedge \left( \sqrt{\frac{\delta d}{n}} + \frac{d}{n}\right).
$$
We have by Proposition \ref{prop: rates for theta_2}, that, with probability greater than $1-c_1e^{-c_0 d}$,
$$
\ell^2(\tilde{\theta}_{m_*},\theta) \lesssim \left(\sqrt{\frac{\delta d }{n}} + \frac{d}{n}\right) \wedge\left(\frac{\ell^{**} d}{n\|\theta\|^2} + \frac{d}{n}\right).$$
Hence we get that as long as $\tilde{\ell} \leq \ell^{**}$, then with probability greater than $1-c_1\cdot e^{-c_0d}$, we have
\begin{equation*}
\ell^2(\tilde\theta,\theta) \lesssim \left(\sqrt{\frac{\delta d }{n}} + \frac{d}{n}\right).
\end{equation*}

\textit{Bounding $I_2$:}
In this case $g_k \geq \ell^{**}$. Notice first that, when $\|\theta\|^2 \leq \frac{\sqrt{d g_k}}{n} $, we have that $\|\tilde\theta(g_k) \|^2 \lesssim \frac{\sqrt{d g_k}}{n}$ and hence $w^2_k \gtrsim \frac{\sqrt{d g_k}}{n}$ and when $\|\theta\|^2 \geq \frac{\sqrt{dg_k}}{n} $ then $
\|\tilde\theta(g_k) \|^2 \lesssim \|\theta\|^2
$
and $w^2_k \gtrsim \frac{d g_k}{n^2\|\theta\|^2}$. It comes out that as long as $g_k \geq \ell^{**}$ we have that
$$
w_k^2 \gtrsim  \frac{\sqrt{d g_k}}{n} \wedge \frac{d g_k}{n^2\|\theta\|^2}.
$$

In this case
\begin{align*}
    I_2 \leq \mathbf{P}(\tilde{\ell}>\ell^{**})&=\mathbf{P}(\tilde{m}>\ell^{**})\\&
    \leq 
    \sum_{m:\;g_m>\ell^{**}}\mathbf{P}(\tilde{\ell}=g_m) \\&
    \leq
    \sum_{m:\;g_m>\ell^{**}} \mathbf{P}(\ell(\tilde{\theta}_{m},\tilde{\theta}_{m-1})>\omega_m)\\ &
    \leq \sum_{m:\;g_m>\ell^{**}}\mathbf{P}(\ell(\tilde{\theta}_{m},\theta) + \ell(\tilde{\theta}_{m-1},\theta)>\omega_m)\\ &
    \leq 2\sum_{m:\;g_m\geq\ell^{**}}\mathbf{P}\left(\ell(\tilde{\theta}_{m},\theta) >\frac{\omega_m}{2}\right)\\ &
    \stackrel{(*)}{\lesssim}
    2|G|e^{-c_0d}\\&
    \lesssim\log\left(\frac{n}{d}\right)e^{-c_0d}.
\end{align*}
where $(*)$ holds, because for every $g_m\geq\ell^{**}$ due to Lemma \ref{lem: property for large ell} we have $$\ell^2(\tilde{\theta}_{g_{m}},\theta)\lesssim w^2_k \;\;\;\;\;\;\text{w.p.}\;\;\; 1-c_1\cdot e^{-c_0 d }.$$

\subsection{Proof of Theorem \ref{thm: lepski2}}
 For this proof we will mimick the proof of Theorem \ref{thm: lepski}. Let us denote $\tilde\theta:=\tilde\theta(\tilde{\ell}_2)$, $\tilde{\theta}_k := \tilde\theta(g_k)$ and $\omega^2_k:=4 C^2 \left(\frac{\sqrt{d g_k}}{n}\wedge\frac{d g_k}{n^2\|\tilde\theta(\tilde{\ell}) \|^2 }\right)$.
Notice that, with probability $1-c_1\log(n/d) e^{-c_0d}$, we have  
$$
\left|\|\tilde\theta(\tilde{\ell}) \| - \|\theta\|\right|^2 \lesssim\frac{d}{n}+ \sqrt{\frac{d \delta}{n}}.
$$

 We have for any $\omega$,
    \[
\mathbf{P}(\ell(\tilde{\theta},\theta)\geq \omega)=
\mathbf{P}\left(\ell(\tilde{\theta},\theta)\geq \omega \; \cap \; \{\tilde{\ell}_2\leq\ell^{**}\}\right) + \mathbf{P}\left(\ell(\tilde{\theta},\theta)\geq \omega \; \cap \;\{ \tilde{\ell}_2>\ell^{**}\}\right)\dfn I_1 + I_2.
\]

\textit{Bounding $I_1$:}
We begin by defining the smallest element $g_{m_{*}}$ from grid $G$ that is greater than $\ell^{**}$. Our intention here is to replace the unknown $\ell^{**}$ with a $\ell$ in a close range, for which further analysis is tractable.\\ 
Formally, 
$$m_*\dfn \min\{m\in\{0,1,\dots,M\}\;:\; \ell^{**}\leq g_m\}.$$ 
By this definition $g_{m_*-1}<\ell^{**}$, hence $g_{m_*}$ differs from $\ell^{**}$ by a factor of 2 at most, since $g_{m_*}=2g_{m_*-1}<2\ell^{**}$. Let $\epsilon_*$ and $ \epsilon_k$ be signs such that $\ell(\tilde{\theta}_{m_*},\theta) =\|\epsilon_{m^*}\tilde{\theta}_{m_*}-\theta\| $ and for all $\tilde{m}+1 \leq k\leq m_*$ we have $\ell(\tilde{\theta}_{k},\tilde{\theta}_{k-1}) =\|\epsilon_{k}\tilde{\theta}_{k}-\epsilon_{k-1}\tilde{\theta}_{k-1}\|$. 

We decompose the difference using triangle inequality by stepping through grid values until we reach the first number of buckets $g_{m_*}$.
$$\ell(\tilde\theta,\theta) \leq \|\tilde\theta -\epsilon_{\tilde m} \theta\| \leq \|\epsilon_{\tilde m} \tilde{\theta}_{g_{\tilde{m}}}-  \theta\| \leq\sum_{k=\tilde{m}+1}^{m_*}\|\epsilon_{ k}\tilde{\theta}_{k}-\epsilon_{k-1}\tilde{\theta}_{k-1}\|+ \|\epsilon_{m^*}\tilde{\theta}_{m_*}-\theta\| \leq\sum_{k=1}^{m_*}\omega_k+\ell(\tilde{\theta}_{m_*},\theta).$$

On the one hand, when $\|\theta\|^2 \leq \frac{\sqrt{d\ell^{**}}}{n} $, we have that $w^2_k \leq 4C^2\frac{\sqrt{d g_k}}{n}$ and the sum $\sum_{k=\tilde{m}+1}^{m_*}\omega_k$ can be upper bounded by a sum of a geometric progression, and so
$$\sum_{k=\tilde{m}+1}^{m_*}\omega_k\leq\sum_{k=1}^{m_*}\omega_k=2C\frac{\sqrt[4]{d g_1}}{\sqrt{n}}\sum_{k=1}^{m_*}(\sqrt[4]{2})^k=2C\frac{\sqrt[4]{d g_1}}{\sqrt{n}}\frac{(\sqrt[4]{2})^{m_*+1}-1}{\sqrt[4]{2}-1}\leq 2C\frac{\sqrt[4]{d g_{m_*+1}}}{\sqrt{n}} .$$
It comes out that 
$$
\left(\sum_{k=m_1}^{m_*}\omega_k\right)^2 \lesssim \left( \sqrt{\frac{\delta d}{n}} + \frac{d}{n}\right).
$$
On the other hand, when $ \frac{\sqrt{d\ell^{**}}}{n}  \leq \|\theta\|^2 \leq 1$, we have, with probability greater than $1-c_1\log(n/d) e^{-c_0d}$, that
$$
\|\tilde\theta(\tilde{\ell}) \|^2 \lesssim \|\theta\|^2.
$$
It comes out that $w^2_k \lesssim 4C^2\frac{d g_k}{n^2\|\theta\|^2}$ and the sum $\sum_{k=\tilde{m}+1}^{m_*}\omega_k$ can be upper bounded by a sum of a geometric progression, and so
$$
\left(\sum_{k=m_1}^{m_*}\omega_k\right)^2 \lesssim \frac{d\ell^{**}}{n^2\|\theta\|^2}.
$$
It comes out that 
$$
\left(\sum_{k=m_1}^{m_*}\omega_k\right)^2 \lesssim \frac{d\ell^{**}}{n^2\|\theta\|^2} \wedge \left( \sqrt{\frac{\delta d}{n}} + \frac{d}{n}\right).
$$
We have by Proposition \ref{prop: rates for theta_2}, that, with probability greater than $1-c_1e^{-c_0 d}$,
$$
\ell^2(\tilde{\theta}_{m_*},\theta) \lesssim \left(\sqrt{\frac{\delta d }{n}} + \frac{d}{n}\right) \wedge\left(\frac{\ell^{**} d}{n\|\theta\|^2} + \frac{d}{n}\right).$$
Hence we get that as long as $\tilde{\ell} \leq \ell^{**}$, then with probability greater than $1-c_1\log(n/d) e^{-c_0d}$, we have
\begin{equation*}
\ell^2(\tilde\theta,\theta) \lesssim \left(\sqrt{\frac{\delta d }{n}} + \frac{d}{n}\right) \wedge\left(\frac{\ell^{**} d}{n\|\theta\|^2} + \frac{d}{n}\right).
\end{equation*}

\textit{Bounding $I_2$:}
In this case $g_k \geq \ell^{**}$. Notice first that, when $\|\theta\|^2 \leq \frac{\sqrt{d g_k}}{n} $, we have that $\|\tilde\theta(\tilde{\ell}) \|^2 \lesssim \frac{\sqrt{d g_k}}{n}$ and hence $w^2_k \gtrsim \frac{\sqrt{d g_k}}{n}$ and when $\|\theta\|^2 \geq \frac{\sqrt{dg_k}}{n} $ then $
\|\tilde\theta(\tilde{\ell}) \|^2 \lesssim \|\theta\|^2
$
and $w^2_k \gtrsim \frac{d g_k}{n^2\|\theta\|^2}$. It comes out that as long as $g_k \geq \ell^{**}$ we have that
$$
w_k^2 \gtrsim  \frac{\sqrt{d g_k}}{n} \wedge \frac{d g_k}{n^2\|\theta\|^2}.
$$

In this case
\begin{align*}
    I_2 \leq \mathbf{P}(\tilde{\ell}_2>\ell^{**})&=\mathbf{P}(\tilde{m}>\ell^{**})\\&
    \leq 
    \sum_{m:\;g_m>\ell^{**}}\mathbf{P}(\tilde{\ell}_2=g_m) \\&
    \leq
    \sum_{m:\;g_m>\ell^{**}} \mathbf{P}(\ell(\tilde{\theta}_{m},\tilde{\theta}_{m-1})>\omega_m)\\ &
    \leq \sum_{m:\;g_m>\ell^{**}}\mathbf{P}(\ell(\tilde{\theta}_{m},\theta) + \ell(\tilde{\theta}_{m-1},\theta)>\omega_m)\\ &
    \leq 2\sum_{m:\;g_m\geq\ell^{**}}\mathbf{P}\left(\ell(\tilde{\theta}_{m},\theta) >\frac{\omega_m}{2}\right)\\ &
    \stackrel{(*)}{\lesssim}
    2|G|e^{-c_0d}\\&
    \lesssim\log\left(\frac{n}{d}\right)e^{-c_0d}.
\end{align*}
where $(*)$ holds, because for every $g_m\geq\ell^{**}$ due to Lemma \ref{lem: property for large ell} we have $$\ell^2(\tilde{\theta}_{g_{m}},\theta)\leq w_k^2 \;\;\;\;\;\;\text{w.p.}\;\;\; 1-c_1\cdot e^{-c_0 d }.$$

\section{Technical Lemmas}
\begin{lem}
    Assume that $k\geq 1$. Let $\delta,\delta' \in (0,1/2)$, then we have that 
    $$
    |g(\delta) - g(\delta')| \leq \frac{2k|\delta-\delta'|}{3}.
    $$
\end{lem}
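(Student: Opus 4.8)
The plan is to derive an exact formula for $g(\delta)=\mathbf{E}(\bar{\eta}_i^2)$ as a polynomial in $1-2\delta$ and then control its derivative in $\delta$. First I would expand
$$\bar{\eta}_i^2=\frac{1}{k^2}\sum_{j,l\in I_i}\eta_j\eta_l$$
and compute the pairwise correlations. Using $\eta_{m+1}=X_{m+1}\eta_m$ with the $X_m$ i.i.d.\ and $\mathbf{E}(X_m)=1-2\delta$, for two indices $j\le l$ lying in the same bucket we have $\eta_j\eta_l=\eta_j^2\prod_{m=j+1}^{l}X_m=\prod_{m=j+1}^{l}X_m$, hence $\mathbf{E}(\eta_j\eta_l)=(1-2\delta)^{l-j}$. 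Writing $\rho\dfn 1-2\delta\in(0,1)$ and grouping the $k^2$ ordered pairs $(j,l)$ by their gap $t=|j-l|$ (there are $k$ pairs with $t=0$ and $2(k-t)$ pairs with gap $t\ge 1$), this gives the closed form
$$g(\delta)=\frac{1}{k^2}\Bigl(k+2\sum_{t=1}^{k-1}(k-t)\,\rho^{t}\Bigr).$$
For $k=1$ the sum is empty, $g\equiv 1$, and the claim is trivial, so I would assume $k\ge 2$ below.

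Next I would differentiate. Since $d\rho/d\delta=-2$,
$$g'(\delta)=-\frac{4}{k^2}\sum_{t=1}^{k-1}t(k-t)\,\rho^{t-1}.$$
Because $0<\rho<1$ we have $\rho^{t-1}\le 1$, so $|g'(\delta)|\le \frac{4}{k^2}\sum_{t=1}^{k-1}t(k-t)$. Evaluating this elementary sum via $\sum_{t=1}^{k-1}t=\tfrac{k(k-1)}{2}$ and $\sum_{t=1}^{k-1}t^2=\tfrac{k(k-1)(2k-1)}{6}$ yields $\sum_{t=1}^{k-1}t(k-t)=\tfrac{k(k^2-1)}{6}$, and therefore
$$|g'(\delta)|\le\frac{4}{k^2}\cdot\frac{k(k^2-1)}{6}=\frac{2(k^2-1)}{3k}\le\frac{2k}{3}.$$

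Finally, $g$ is continuously differentiable on $(0,1/2)$, so by the mean value theorem $|g(\delta)-g(\delta')|\le\bigl(\sup_{\xi\in(0,1/2)}|g'(\xi)|\bigr)\,|\delta-\delta'|\le\frac{2k}{3}|\delta-\delta'|$, which is exactly the stated bound. I do not anticipate a genuine obstacle: the only two points needing care are the correlation identity $\mathbf{E}(\eta_j\eta_l)=(1-2\delta)^{|j-l|}$, which relies on the within-bucket independence of the $X_m$ already noted in Section~\ref{sec: model}, and the bookkeeping in the closed-form evaluation of $\sum_{t=1}^{k-1}t(k-t)$ together with the crude but sufficient bound $\rho^{t-1}\le 1$.
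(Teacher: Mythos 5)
Your proof is correct, and it starts from the same exact identity as the paper: expanding $\bar\eta_i^2$ through the i.i.d.\ increments $X_m$ to get $k^2 g(\delta)=k+2\sum_{t=1}^{k-1}(k-t)\rho^{t}$ with $\rho=1-2\delta$ (the paper writes this as $k+2\sum_{s<s'}\rho^{s'-s}$, which is the same after grouping by the gap). Where you diverge is the second half. The paper resums the geometric series into a closed form in $\rho$, Taylor-expands $(1-2\delta)^{k+1}$ around $\delta=0$ to read off $\lim_{\delta\to 0}g'(\delta)=-\tfrac{2(k^2-1)}{3k}$, and then extends this to a Lipschitz bound on all of $(0,1/2)$ by monotonicity considerations. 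You instead differentiate the polynomial termwise, use the crude bound $\rho^{t-1}\le 1$, and evaluate $\sum_{t=1}^{k-1}t(k-t)=\tfrac{k(k^2-1)}{6}$ exactly, which bounds $|g'(\delta)|\le\tfrac{2(k^2-1)}{3k}\le\tfrac{2k}{3}$ uniformly on the whole interval before applying the mean value theorem. Your route is more elementary and self-contained: it avoids the closed-form/Taylor bookkeeping entirely and makes the uniform-in-$\delta$ control of $|g'|$ explicit, whereas the paper's argument gets the same constant but needs the (true, though tersely justified) fact that $|g'|$ is largest near $\delta=0$; the paper's closed form, on the other hand, is what yields the sharper expansion $g(\delta)=1-\tfrac{2(k^2-1)}{3k}\delta+o(\delta)$ if one wants the asymptotics and not just the Lipschitz constant.
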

\begin{proof}
    Let us denote $\rho:= 1-2\delta$. It is easy to observe that $\mathbf{E}(X_1) = \rho$. For $0<\delta $, we have
    \begin{align*}
        k^2 g(\delta) &= \mathbf{E}\left( \sum_{s=1}^k \prod_{i=1}^s X_i\right)^2 \\
        &= 2  \sum_{1 \leq s<s'\leq k} \mathbf{E}\left(\prod_{i=1}^s X_i^2\prod_{i=s+1}^{s'} X_i\right) + \sum_{1 \leq s \leq k} \mathbf{E}\left(\prod_{i=1}^s X_i^2\right)\\
        &= k + 2  \sum_{1 \leq s<s'\leq k} \rho^{s'-s}.
    \end{align*}
It comes out that $g(\delta)$ is non-increasing as a function of $\delta$. Hence 
$$
|g(\delta) - g(\delta')| \leq \left|\underset{\delta \to 0}{\text{lim}}g'(\delta)\right| |\delta-\delta'|.
$$
Moreover we have that 
    $$
    k^2g(\delta)= k + 2\sum_{s'=2}^{k} \frac{\rho - \rho^{s'}}{1-\rho} = \frac{k(1+\rho)}{1-\rho} - 2\frac{\rho - \rho^{k+1}}{(1-\rho)^2}=\frac{2k(1-\delta)\delta - 1+2\delta + (1-2\delta)^{k+1}}{ 2\delta^2}.
    $$
    Using Taylor series approximation, we have that 
    $$
    (1-2\delta)^{k+1} = 1 - 2(k+1)\delta +2k(k+1)\delta^2 - 4k(k+1)(k-1)\delta^3/3 +o(\delta^3).
    $$
    Hence 
    $$
    g(\delta) = 1 - \frac{2(k^2-1)}{3k}\delta+o(\delta).
    $$
    It comes out that 
    $$
    \left|\underset{\delta \to 0}{\text{lim}}g'(\delta)\right| \leq 2k/3.
    $$
    This concludes the proof.
\end{proof}

\begin{lem}
\label{lem:concentration of three terms}
Assume that $d \leq \ell \leq n$, $n\delta\leq\ell$ and $\xi$ a sub-Gaussian isotropic vector. Then
$$\P\left( | \|\bar{\eta}\|^2 - \mathbf{E}(\|\bar{\eta}\|^2) | \geq \sqrt{d\ell} \right)\geq 1-2e^{-cd},$$
$$\P\left( \frac{\ell}{4}\leq \|\bar{\eta}\|^2 \leq \ell \right)\geq 1-e^{-c\ell},$$
$$\P\left(\left\|\frac{ww^T}{\ell}-I_d\right\|_{op} \leq C\frac{d}{\ell}+C\sqrt{\frac{d}{\ell}} \right) \geq 1 - e^{-cd},$$
and
$$\P\left(\|\xi\|\leq C\sqrt{d}\right) \geq 1-e^{-cd},$$
for absolute constants $c>0$ and $C>0$.
\end{lem}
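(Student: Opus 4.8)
The plan is to prove the four displayed inequalities one at a time; each is a routine concentration statement once the correct independence structure is isolated.

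\emph{First two bounds.} The key structural fact, already observed in Section~\ref{sec: model}, is that $\|\bar{\eta}\|^2=\sum_{i=1}^\ell \bar{\eta}_i^2$ is a sum of i.i.d.\ random variables taking values in $[0,1]$. Indeed, writing $a_i$ for the first index of bucket $i$ and using $\eta_{a_i+m}=\eta_{a_i}\prod_{j=a_i+1}^{a_i+m}X_j$, one gets $\bar{\eta}_i=\tfrac{\eta_{a_i}}{k}\sum_{m=0}^{k-1}\prod_{j=a_i+1}^{a_i+m}X_j$, so $\bar{\eta}_i^2$ depends only on $X_{a_i+1},\dots,X_{a_i+k-1}$, and these blocks are disjoint across buckets; boundedness by $1$ is clear since $\bar{\eta}_i\in[-1,1]$. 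Hoeffding's inequality then gives $\P(|\|\bar{\eta}\|^2-\mathbf{E}\|\bar{\eta}\|^2|>t)\le 2e^{-2t^2/\ell}$, and the choice $t=\sqrt{d\ell}$ yields the first bound with an absolute constant. For the second bound, the upper inequality $\|\bar{\eta}\|^2\le\ell$ is deterministic; for the lower one, Lemma~\ref{lem:expecation} with $\delta'=0$ gives $\mathbf{E}\|\bar{\eta}\|^2=\ell\, g(\delta)\ge \ell(1-\tfrac{2k\delta}{3})$, and the hypothesis $n\delta\le\ell$ (i.e.\ $k\delta\le 1$) forces $\mathbf{E}\|\bar{\eta}\|^2\ge\ell/3$; Hoeffding's lower tail then gives $\P(\|\bar{\eta}\|^2<\ell/4)\le \P(\|\bar{\eta}\|^2-\mathbf{E}\|\bar{\eta}\|^2<-\ell/12)\le e^{-\ell/72}$.

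\emph{Third bound.} Write $\tfrac1\ell ww^\top=\tfrac1\ell\sum_{i=1}^\ell w_iw_i^\top$, an average of $\ell$ i.i.d.\ rank-one matrices built from isotropic $1$-sub-Gaussian columns $w_i$ (independence across buckets because the $w_i$ are formed from disjoint blocks of the $\xi_j$). This is the standard non-asymptotic sample-covariance setting: by the sub-Gaussian matrix deviation bound of \cite{vershynin2010introduction}, which relies on the Hanson--Wright inequality \cite{rudelson2013hanson}, with probability at least $1-e^{-cd}$ one has $\|\tfrac1\ell ww^\top-I_d\|_{op}\le C(\sqrt{d/\ell}+d/\ell)$, which is exactly the claim.

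\emph{Fourth bound.} Since $\xi$ is isotropic and $1$-sub-Gaussian, $\mathbf{E}\|\xi\|^2=d$ and $\|\xi\|^2=\xi^\top I_d\,\xi$ is a sub-exponential quadratic form; the Hanson--Wright inequality \cite{rudelson2013hanson} gives $\P(\|\xi\|^2-d>t)\le\exp(-c\min(t^2/d,\,t))$, and taking $t=d$ gives $\P(\|\xi\|>\sqrt{2d})\le e^{-cd}$, i.e.\ the fourth bound with $C=\sqrt2$. I do not expect a genuine obstacle here; the only point requiring care is the bookkeeping in the first two bounds, namely verifying that the $\bar{\eta}_i^2$ are i.i.d.\ and $[0,1]$-valued and invoking $n\delta\le\ell$ through Lemma~\ref{lem:expecation} to keep $\mathbf{E}\|\bar{\eta}\|^2$ of order $\ell$, without which the lower bound $\ell/4$ would fail because the per-bucket signal would be washed out.
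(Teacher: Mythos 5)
Your proposal is correct and follows essentially the same route as the paper's proof: Hoeffding's inequality for the two $\|\bar{\eta}\|^2$ bounds (using the i.i.d.\ $[0,1]$-valued $\bar{\eta}_i^2$ and Lemma \ref{lem:expecation} with $n\delta\leq\ell$ to get $\mathbf{E}\|\bar{\eta}\|^2\geq\ell/3$), the sub-Gaussian covariance deviation bound of \cite{vershynin2010introduction} with $t=d$ for the operator-norm term, and the Hanson--Wright inequality \cite{rudelson2013hanson} with $t=d$ for $\|\xi\|$. Your choice $t=\sqrt{d\ell}$ in the first Hoeffding step is the right one (the paper's write-up has a slip saying $t=\sqrt{d/\ell}$), and your explicit verification that the $\bar{\eta}_i^2$ are i.i.d.\ just makes precise what the paper asserts in the main text.
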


\begin{proof}
First, since $0\leq\bar{\eta}_i^2\leq1$ and $\bar{\eta}_i^2$ are i.i.d., Hoeffding's inequality can be applied for $-\bar{\eta}_i^2$-s, namely for every $t>0$

\begin{equation}
    \P\left(|\|\bar{\eta}\|^2-\mathbb{E}\|\bar{\eta}\|^2|\geq t \right)\leq 2e^{\frac{-t^2}{2\ell}}.
    \label{eqn: hof1}
\end{equation}
Hence we get the first inequality for $t=\sqrt{\frac{d}{\ell}}$. Moreover
\begin{equation}
    \P\left(\|\bar{\eta}\|^2 > \mathbb{E}\|\bar{\eta}\|^2 -t \right)\geq 1-e^{\frac{-t^2}{2\ell}}
\end{equation}
Recalling Lemma \ref{lem:expecation}, we have that $\mathbb{E}[\bar{\eta}_i^2]\geq1-2\delta k/3$ and for $\delta k\leq 1$ (or $\ell \geq \delta n$) we get $\mathbb{E}[\bar{\eta}_i^2]\geq\frac{1}{3}$ and $\mathbb{E}[\|\bar{\eta}\|^2]\geq\frac{\ell}{3}$. Consequently, taking $t=\frac{\ell}{12}$ we obtain that with probability greater than $1-e^{\frac{-\ell}{288}}$

$$\|\bar{\eta}\|^2\geq\frac{\ell}{4}$$
On the other hand (with probability 1) $$\|\bar{\eta}\|^2 = \sum_{i=1}^\ell \bar{\eta}_i^2\leq\ell$$
Which summarises the first concentration.

Second, by sub-Gaussian covariance estimation from \citep{vershynin2010introduction}, for every $t>0$ with probability greater than $1-\exp(-t)$ it holds that
$$\left\|\frac{ww^T}{\ell}-I_d\right\|_{op} \leq C\left(\frac{t}{\ell} + \frac{d}{\ell}+\sqrt{\frac{t}{\ell}} +\sqrt{\frac{d}{\ell}}\right) $$
By taking $t= d$ we obtain the desired result.

Third, using Hanson-Wright inequality from \citep{rudelson2013hanson} for sub-Gaussian vectors we get the following tail bound for every $t>0$
$$\P\left(\|\xi\|^2\geq d +C(t + \sqrt{dt})\right) \leq e^{-t}$$
Then taking $t=d$ we conclude our third concentration.
\end{proof}

\subsection*{Proof of Lemma \ref{lem: property for large ell}}

Let's consider two scenarios for  $\|\theta\|^2$.\\
For a small signal strength $\|\theta\|^2\leq\frac{\sqrt{d\ell}}{n}$, we undertake analogous bounding procedure as in Proposition \ref{prop: theoretical rates}. 
\begin{align*}
    \ell^2(\tilde{\theta},\theta) &\lesssim 
\|\hat{\theta}\|^2+\|\theta\|^2 \\&\lesssim 
\left|\|\hat{\theta}\|^2-\|\theta\|^2 \right| + \|\theta\|^2 \\& \lesssim \|\hat{\Sigma}-\Sigma\|_{op}+\|\theta\|^2 \\& \lesssim 
\sqrt{\frac{d}{n}} \|\theta\| + \left(\frac{d}{n}+\frac{\sqrt{d\ell}}{n}\right) + \left(1+\sqrt{\frac{d}{\ell}}\right)\|\theta\|^2
\\& \lesssim \sqrt{\frac{d}{n}}\frac{\sqrt[4]{d\ell}}{\sqrt{n}} + \frac{\sqrt{d\ell}}{n}+\left(1+\sqrt{\frac{d}{\ell}}\right)\frac{\sqrt{d\ell}}{n} \\& 
\lesssim \frac{\sqrt{d\ell}}{n} + \frac{\sqrt{d\ell}}{n} + \frac{\sqrt{d\ell}}{n}\\& \lesssim \frac{\sqrt{d\ell}}{n} \wedge \left(\frac{d\ell}{\|\theta\|^2 n^2} + \frac{d}{n}\right),
\end{align*}
where, in the last inequality, we use the fact that $\frac{d}{n}\leq \frac{\sqrt{d\ell}}{n} \leq\frac{d\ell}{\|\theta\|^2 n^2}$ in this regime .

For further analysis let's remind ourselves that $\ell\geq\ell^{**}$ and $\|\theta\|^2\geq\frac{\sqrt{d\ell}}{n}$. Hence $\frac{\sqrt{d\ell}}{n} \geq \frac{d\ell}{n\|\theta\|^2} + \frac{d}{n}$ because $d\leq \ell$.
According to Theorem \ref{thm: unkown delta first loss bound} we know that $$\ell^2(\tilde{\theta},\theta) \lesssim \left(\frac{d}{\ell}+\frac{n^2\delta^2}{\ell^2}\right)\|\theta\|^2 +\frac{d}{n}
+ \frac{1}{\|\theta\|^2}\frac{d\ell}{n^2}.
$$


It remains to show that 
$$\left(\frac{d}{\ell} + \frac{n^2\delta^2}{\ell^2}\right)\|\theta\|^2\leq\frac{d}{n}
+ \frac{1}{\|\theta\|^2}\frac{d\ell}{n^2},$$ in the case where $\|\theta\|^2\leq 1$, since when $\|\theta\|^2\geq 1$ we have that $\ell^{**}=n$ and 
$$
\ell^2(\tilde{\theta},\theta) \lesssim \frac{d}{n}.
$$
To prove the latter let's observe that , using \eqref{in: l has three bounds}, for $\ell \geq \ell^{**}$ we have
\[\ell\geq n\delta \vee d, \;\;\;\;\;\; \ell\geq\frac{\delta^{2/3}n^{4/3}\|\theta\|^{4/3}}{d^{1/3}}, \;\;\;\;\;\; \ell\geq n \|\theta\|^2 . \]
It comes out that 
$$
\frac{n^2\delta^2}{\ell^2}\|\theta\|^2\leq \frac{1}{\|\theta\|^2}\frac{d\ell}{n^2},
$$
and that
$$
\frac{d}{\ell}\|\theta\|^2\leq \frac{1}{\|\theta\|^2}\frac{d\ell}{n^2}.
$$
 Thus
$$\ell^2(\tilde{\theta},\theta)\lesssim\frac{d \ell }{n^2 \|\theta\|^2}.$$
In summary we always have that
$$\ell^2(\tilde{\theta},\theta)\lesssim \frac{\sqrt{d\ell}}{n}\wedge\left( \frac{d \ell }{n^2 \|\theta\|^2} + \frac{d}{n}\right).$$

\end{document}